\newtheorem{theorem}[subsection]{Theorem}
\newtheorem{lemma}[subsection]{Lemma}
\newtheorem{corollary}[subsection]{Corollary}
\newtheorem{remark}[subsection]{Remark}
\newtheorem*{remark*}{Remark}
\newtheorem{definition}[subsection]{Definition}
\newcommand\testshape{family=\f@family; series=\f@series; shape=\f@shape.}
\def\myemphInternal#1{\if n\f@shape%
\begingroup\itshape #1\endgroup\/%
\else\begingroup\bfseries #1\endgroup%
\fi}
\def\myemph{\futurelet\testchar\MaybeOptArgmyemph}
\def\MaybeOptArgmyemph{\ifx[\testchar \let\next\OptArgmyemph
                 \else \let\next\NoOptArgmyemph \fi \next}
\def\OptArgmyemph[#1]#2{\index{#1}\myemphInternal{#2}}
\def\NoOptArgmyemph#1{\myemphInternal{#1}}
\newcommand\Mman{M}
\newcommand\Nman{N}
\newcommand\Uman{U}
\newcommand\Xman{X}
\newcommand\RRR{\mathbb{R}}
\newcommand\ZZZ{\mathbb{Z}}
\newcommand\FFF{\mathcal{F}}
\newcommand\eps{\varepsilon}
\newcommand\id{\mathrm{id}}          
\newcommand\image{\mathrm{image}}    
\newcommand\Int{\mathrm{Int}}        
\newcommand\supp{\mathrm{supp\,}}    
\newcommand\wrm[1]{\mathop{\wr}\limits_{\ZZZ_{#1}}}
\newcommand\Maps{\mathrm{Map}}
\newcommand\Diff{\mathcal{D}}
\newcommand\Stab{\mathcal{S}}
\newcommand\Orbit{\mathcal{O}}
\newcommand\DiffId{\mathcal{D}_{\id}}
\newcommand\StabId{\mathcal{S}_{\id}}
\newcommand\DiffM{\Diff(M)}
\newcommand\Stabf{\Stab(f)}
\newcommand\Orbf{\Orbit(f)}
\newcommand\Orbff{\Orbit_{f}(f)}
\newcommand\StabIdf{\StabId(f)}
\newcommand\func{f}
\newcommand{\fKRGraph}{\Gamma(\func)}
\newcommand\flow{\mathbf{F}}
\newcommand\Torus{T^2}
\newcommand\curveMeridian{C}
\newcommand\curveParallel{C'}
\newcommand\flowMeridian{\mathbf{M}}
\newcommand\flowParallel{\mathbf{L}}
\newcommand\subgroupMeridian{\mathcal{M}}
\newcommand\subgroupParallel{\mathcal{L}}
\newcommand\DiffT{\Diff(\Torus)}
\newcommand\DiffIdT{\DiffId(\Torus)}
\newcommand\DiffTC{\Diff(\Torus,\CrvList)}
\newcommand\DiffIdTC{\DiffId(\Torus,\CrvList)}
\newcommand\StabPrf{\Stab'(\func)}
\newcommand\StabfX{\Stab(\func,\Xman)}
\newcommand\StabIdfX{\StabId(\func,\Xman)}
\newcommand\StabPrfX{\Stab'(\func,\Xman)}
\newcommand\DiffMX{\Diff(M,\Xman)}
\newcommand\DiffIdMX{\DiffId(M,\Xman)}
\newcommand\OrbfX{\Orbit(\func,\Xman)}
\newcommand\OrbffX{\Orbit_{\func}(\func,\Xman)}
\newcommand\SerreFibr{p}
\newcommand\Morse{\mathrm{Morse}}
\newcommand\eval{\varphi}
\newcommand\kerjo{\mathcal{K}}
\newcommand\incmap{i}
\newcommand\jZ{\incmap_0}
\newcommand\jO{\incmap_1}
\newcommand\fibr{\lambda}
\newcommand\qhom{q} 
\newcommand\DT{\Diff^{\id}}
\newcommand\DTC{\Diff^{\id}_{\CrvList}}
\newcommand\Of{\Orbit}
\newcommand\OfC{\Orbit_{\CrvList}}
\newcommand\Sf{\Stab}
\newcommand\SfC{\Stab_{\CrvList}}
\newcommand\Sidf{\Stab^{\id}}
\newcommand\SidfC{\Stab^{\id}_{\CrvList}}
\newcommand\Cyl{Q}
\newcommand\CrvList{\mathcal{C}}
\newcommand\dif{h}
\newcommand\gdif{g}
\newcommand\slide{\theta}
\newcommand\dtw{\tau}
\newcommand\grp{\mathcal{G}}
\newcommand\qiso{q}
\newcommand\CNbh{W}
\newcommand\Slide{\Theta}
\newcommand\krot{\kappa}
\newcommand\dc{\partial_{\CrvList}}
\newcommand{\idT}{\id_{\Torus}}
\newcommand\pmult{\,\,}
\newcommand\idQ{\id_{\Cyl_0}}
\newcommand\dQ{\partial\Cyl_0}
\newcommand\SfQ{\Stab^{\Cyl}}
\newcommand\OfQ{\Orbit^{\Cyl}}
\newcommand\DQ{\Diff^{\Cyl}}
\newcommand\PQ{\mathcal{P}}
\newcommand\cclass[1]{[#1]_{c}}
\newcommand\ncover{\eta}
\newcommand\hfunc{\widehat{\func}}
\begin{document}

\title[Smooth functions on $2$-torus]
{Smooth functions on $2$-torus whose Kronrod-Reeb graph contains a cycle}

\author{Sergiy Maksymenko}
\address{Topology department, Institute of Mathematics of NAS of Ukraine, Tereshchenkivska str. 3, Kyiv, 01601, Ukraine}
\curraddr{}
\email{maks@imath.kiev.ua}

\author{Bohdan Feshchenko}
\address{Topology department, Institute of Mathematics of NAS of Ukraine, Tereshchenkivska str. 3, Kyiv, 01601, Ukraine}
\email{fb@imath.kiev.ua}

\subjclass[2000]{57S05, 57R45, 37C05}
\keywords{Diffeomorphism, Morse function, homotopy type}

\begin{abstract}
Let $f:M\to \RRR$ a Morse function on a connected compact surface $M$, and $\mathcal{S}(f)$ and $\mathcal{O}(f)$ be respectively the stabilizer and the orbit of $f$ with respect to the right action of the group of diffeomorphisms $\mathcal{D}(M)$.
In a series of papers the first author described the homotopy types of connected components of $\mathcal{S}(f)$ and $\mathcal{O}(f)$ for the cases when $M$ is either a $2$-disk or a cylinder or $\chi(M)<0$.
Moreover, in two recent papers the authors considered special classes of smooth functions on $2$-torus $T^2$ and shown that the computations of $\pi_1\mathcal{O}(f)$ for those functions reduces to the cases of $2$-disk and cylinder.

In the present paper we consider another class of Morse functions $f:T^2\to\RRR$ whose KR-graphs have exactly one cycle and prove that for every such function there exists a subsurface $Q\subset T^2$, diffeomorphic with a cylinder, such that $\pi_1\mathcal{O}(f)$ is expressed via the fundamental group  $\pi_1\mathcal{O}(f|_{Q})$ of the restriction of $f$ to $Q$.

This result holds  for a larger class of smooth functions $f:T^2\to \RRR$ having the following property: for every critical point $z$ of $f$ the germ of $f$ at $z$ is smoothly equivalent to a homogeneous polynomial $\RRR^2\to \RRR$ without multiple factors.
\end{abstract}

\subjclass{57S05, 57R45, 37C05} 

\keywords{Diffeomorphism, Morse function, homotopy type} 

\maketitle

\begin{center}
\em Dedicated in memory of our teacher Sharko Volodymyr Vasylyovych
\end{center}

\medskip

\section{Introduction}
Let $\Mman$ be a smooth compact surface, $\Xman \subset\Mman$ be a closed (possibly empty) subset, and $\DiffMX$ be the group of diffeomorphisms of $\Mman$ fixed on some neighbourhood of $\Xman$.
Then $\DiffMX$ acts from the right on $C^{\infty}(\Mman)$ by following rule:
if $\dif\in\DiffMX$ and $\func\in C^{\infty}(\Mman)$ then the result of the action of $\dif$ on $\func$ is the composition map
\begin{equation}\label{main-act}
\func\circ\dif : M\xrightarrow{~~\dif~~} M \xrightarrow{~~\func~~}\RRR.
\end{equation}
Given $f\in C^{\infty}(\Mman)$ let
\begin{align*}
\StabfX &= \{\func\in\DiffMX\mid\func\circ\dif=\func\}, &
\OrbfX &= \{\func\circ\dif\mid\dif\in\DiffMX\}.
\end{align*}
be respectively the \emph{stabilizer} and the \emph{orbit} of $\func$ under the action~\eqref{main-act}.
Let also
\[ \StabPrfX = \Stabf \cap \DiffIdMX.\]
If $\Xman$ is empty, then we omit it from notation and write $\DiffM=\Diff(M,\varnothing)$, $\Stabf=\Stab(f,\varnothing)$, $\Orbf=\Orbit(f,\varnothing)$, and so on.
We will also endow the spaces $\DiffMX$, $C^{\infty}(\Mman)$, $\StabfX$, and $\OrbfX$ with the corresponding Whitney $C^{\infty}$-topologies.

Denote by $\StabIdfX$ and $\DiffIdMX$ the identity path components $\StabfX$ and $\DiffMX$ respectively, and $\OrbffX$ be the path component of $\func$ in $\OrbfX$.

\smallskip 

Let $\mathcal{F}(M)$ be a subset in $C^{\infty}(\Mman)$ consisting of functions $\func$ having the following two properties:
\begin{itemize}
\item[\rm(B)]
$\func$ takes a constant value at each connected components of $\partial\Mman$, and all critical points of $\func$ are contained in the interior of $M$;

\item[\rm(L)]
for every critical point $z$ of $\func$ the germ of $\func$ at $z$ is smoothly equivalent to a certain {\bfseries homogeneous polynomial $f_z:\RRR^2\to\RRR$ without multiple factors}.
\end{itemize}

Let $\Morse(M) \subset C^{\infty}(\Mman)$ be an open and everywhere dense subset consisting of all Morse functions having the above property (B), that is functions having only \myemph{non-degenerate} critical points.
By the Morse lemma every non-degenerate singularity is smoothly equivalent to a homogeneous polynomial $\pm x^2 \pm y^2$ having no multiple factors.
Therefore $\Morse(M) \ \subset \ \FFF(\Mman)$.
This shows that the class $\FFF(\Mman)$ is large.

\smallskip

Let $\func\in \FFF(\Mman)$ and $c\in\RRR$.
A connected component $\curveMeridian$ of the level set $\func^{-1}(c)$ is called \myemph{critical} if $\curveMeridian$ contains at least one critical point of $\func$; otherwise $\curveMeridian$ is \myemph{regular}.
Consider a partition $\Delta$ of $\Mman$ into connected component of level sets of $\func$.
It is well known that the corresponding quotient $\Mman/\Delta$ has a structure of a finite one-dimensional $CW$-complex and is called \emph{Kronrod-Reeb graph} or simply KR-graph of the function $\func$.
We will denote it by $\fKRGraph$.
The vertices of $\fKRGraph$ are critical components of level sets of $\func$.

This graph was introduced by A.~S.~Kronrod in~\cite{Kronrod:UMN:1950} for studying functions on surfaces and also used by by G.~Reeb in~\cite{Reeb:ASI:1952}.
Applications of $\fKRGraph$ to study Morse functions on surfaces are given e.g. in~\cite{BolsinovFomenko:1997, Kulinich:MFAT:1998, Kudryavtseva:MatSb:1999, Sharko:UMZ:2003, Sharko:MFAT:2006, MasumotoSaeki:KJM:2011}.

\medskip

In a series of papers, \cite{Maksymenko:AGAG:2006}, \cite{Maksymenko:ProcIM:ENG:2010}, \cite{Maksymenko:UMZ:ENG:2012}, \cite{Maksymenko:DefFuncI:2014}, \cite{Maksymenko:pi1Repr:2014}, \cite{Maksymenko:orbfin:2014}, the first author calculated the homotopy types of spaces $\Stabf$ and $\Orbf$ for all $\func\in\FFF(\Mman)$, see~\S\ref{sect:hom_types} for some details.
In particular, it was proved, \cite[Theorem~1.5(3)]{Maksymenko:AGAG:2006}, that if $\func$ is a \myemph{generic} Morse function, i.e. it takes distinct values at distinct critical point, then $\Orbff$ is homotopy equivalent to a finite-dimensional torus.

This result was improved by E.~Kudryavtseva \cite[Theorem~2.5(B)]{Kudryavtseva:MathNotes:2012}, \cite[Theorem~2.6(C)]{Kudryavtseva:MatSb:2013}: using another approach she shown that if $\Mman$ is orientable, $\chi(\Mman)<0$, and $\func$ is Morse, then $\Orbff$ is homotopy equivalent to a quotient $T^{k}/G$ of a finite-dimensional torus $T^k$ by the free action of some finite group $G$.

Recently, \cite{Maksymenko:orbfin:2014}, the first author established such a statement for all $\func\in\FFF(\Mman)$ provided $\Mman$ is distinct from $2$-torus, $2$-sphere, projective plane, and Klein bottle.
It was also shown in~\cite[Theorem~1.8]{Maksymenko:MFAT:2010} that under the same restrictions on $\Mman$, the computation of the homotopy type of $\Orbf$, reduces to the case when $\Mman$ is either $2$-disk, or a cylinder, or a M\"obius band.

In two recent papers, \cite{MaksymenkoFeshchenko:UMZ:ENG:2014}, \cite{MaksymenkoFeshchenko:MS:2014}, the authors studied smooth functions on $2$-torus and shown that under some conditions on $\func\in\FFF(\Torus)$ the computation of the homotopy type of $\Orbf$ also reduces to the cases when $\Mman$ is a $2$-disk or a cylinder.

In the present paper we study functions $\func\in\FFF(\Torus)$ whose Kronrod-Reeb graph has one cycle.
The main result, see Theorem~\ref{th:main:pi1Of}, reduces the computation of $\Orbff$ to the restriction of $\func$ onto some subsurface $\Cyl\subset\Torus$ diffeomorphic to a cylinder.
We also give exact formula expressing $\pi_1\Orbff$ via $\pi_1\Orbit(\func|_{\Cyl})$.
This extends the result of~\cite{MaksymenkoFeshchenko:MS:2014}.

\begin{remark}\rm
In~\cite{MaksymenkoFeshchenko:MS:2014} the group $\DiffMX$ means the group of diffeomorphisms \myemph{fixed on $\Xman$}, while in the present paper we denote by $\DiffMX$ the group of diffeomorphisms \myemph{fixed on some neighbourhood} of $\Xman$.
In fact, if $\Xman$ is a finite collection of regular components of some level-sets of $\func\in\FFF(\Mman)$, such a restriction does not impact on the homotopy types of $\DiffMX$, $\StabfX$ and $\Orbf$, see~\cite{Maksymenko:UMZ:ENG:2012}.
\end{remark}

\subsection{Wreath products $G\wrm{n}\ZZZ$}\label{sect:wreath_product}
Let $G$ be a group with unit $e$, and $n\geq1$.
Denote by $\Maps(\ZZZ_n,G)$ the group of all \myemph{maps}, not necessarily homomorphisms, from cyclic group $\ZZZ_n$ into $G$, with respect to point wise multiplication.
That is if $\alpha,\beta:\ZZZ_n\to G$ two elements from $\Maps(\ZZZ_n,G)$, then their product is given by the formula $(\alpha\beta)(i) = \alpha(i)\cdot\beta(i)$ for $i\in\ZZZ_n$, where the multiplication $\cdot$ is taken in the group $G$.

Notice that the group $\ZZZ$ acts from the right on $\Maps(\ZZZ_n,G)$ by the following rule: if $\alpha\in\Maps(\ZZZ_n,G)$ and $a\in\ZZZ$, then the result $\alpha^{k}:\ZZZ_n\to G$ of the action of $k$ on $\alpha$ is given by the formula:
\begin{equation}\label{equ:action_k_on_alpha}
\alpha^{k}(i) = \alpha(i + k \ \mathrm{mod} \ n), \qquad i\in\ZZZ_n.
\end{equation}
The semidirect product $\Maps(\ZZZ_n,G) \rtimes \ZZZ$ corresponding to this action is called a \myemph{wreath product of $G$ and $\ZZZ$ over $\ZZZ_n$} and denoted by 
\[ G\wrm{n}\ZZZ:= \Maps(\ZZZ_n,G) \rtimes \ZZZ.\]
More precisely, $G\wrm{n}\ZZZ$ is the \myemph{set} $\Maps(\ZZZ_n,G) \times \ZZZ$ with the following operation
\begin{equation}\label{equ:mult_in_wreath_prod}
(\alpha,k)\,(\beta,l) = (\alpha\beta^{k}, k+l),
\end{equation}
for all $(\alpha,k), (\beta,l)\in\Maps(\ZZZ_n,G) \times \ZZZ$.

In particular, we have the following short exact sequence:
\begin{equation}\label{equ:exact_seq_for_wreath_prod}
1 \longrightarrow \Maps(\ZZZ_n, G) \xrightarrow{~~\zeta~~} G \wrm{n} \ZZZ \xrightarrow{~~p~~} \ZZZ \longrightarrow 1,
\end{equation}
where $\zeta(\alpha)=(\alpha,0)$ is a \myemph{canonical inclusion} and $p(\alpha,k)=k$ is a \myemph{canonical projection}.

Notice also that for $n=1$, there is a natural isomorphism $G\wrm{n}\ZZZ \ \cong G \times \ZZZ$.

\subsection{Parallel curves on $\Torus$}\label{sect:paral_curves}
A finite non-empty family of $\curveMeridian_0,\ldots,\curveMeridian_{n-1} \subset \Torus$ of simple closed curves will be called \myemph{parallel} if these curves are mutually disjoint and non-separating.

If $n=1$, then $\Torus\setminus\curveMeridian$ is an open cylinder, we will regard $\Torus$ as a cylinder $\Cyl_0$ with identified boundary components, see Figure~\ref{fig:paral_curves}a).

Suppose $n\geq2$. 
Then all curves in a parallel family must be isotopic each other.
In this case we will always assume that they are \myemph{cyclically enumerated along $\Torus$}, that is $\curveMeridian_i$ and $\curveMeridian_{i+1}$ bound a cylinder $\Cyl_{i}$ containing no other curves $\curveMeridian_j$, where all indices are taken modulo $n$, see Figure~\ref{fig:paral_curves}b).
We will also use the following notation:
\begin{align*}
\CrvList &= \mathop{\cup}\limits_{i=0}^{n-1}\curveMeridian_i, &
\curveMeridian_i &:= \curveMeridian_{i\,\mathrm{mod}\,n}, &
\Cyl_i &:= \Cyl_{i\,\mathrm{mod}\,n}
\end{align*}
for all integers $i\in\ZZZ$.
\begin{figure}[h]
\includegraphics[height=2.5cm]{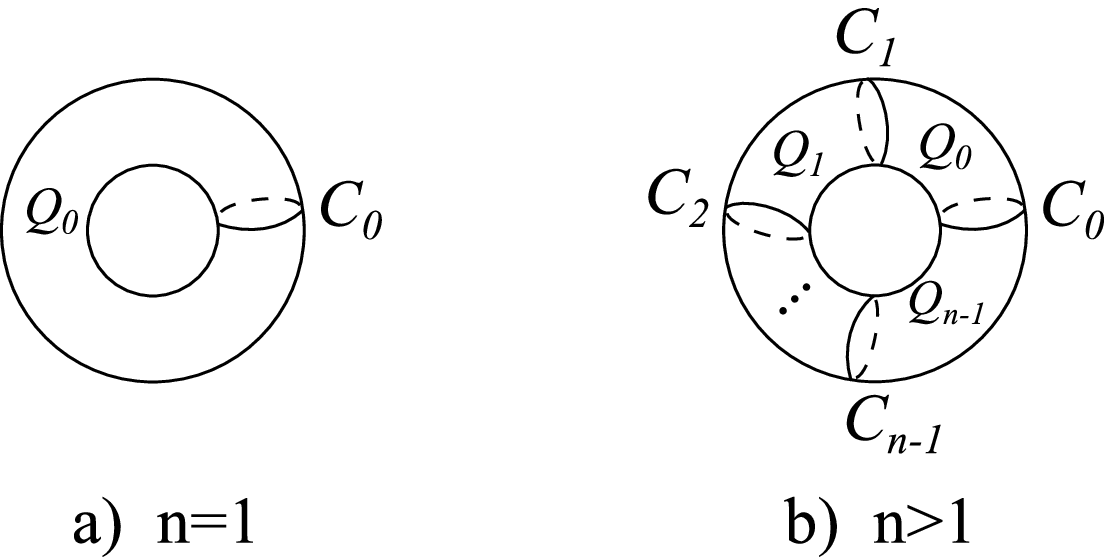}
\caption{}\label{fig:paral_curves}
\end{figure}

\subsection{Cyclic index of $\func$}\label{sect:cyclic_index}
Let $\func\in\FFF(\Torus)$ be such that its KR-graph $\fKRGraph$ is not a tree.
It is easy to show, \cite{MaksymenkoFeshchenko:MS:2014}, that then $\fKRGraph$ has a unique simple cycle, which we will denote by $\Lambda$, see Figure~\ref{fig:f_curves}.
\begin{figure}[h]
\includegraphics[height=2.5cm]{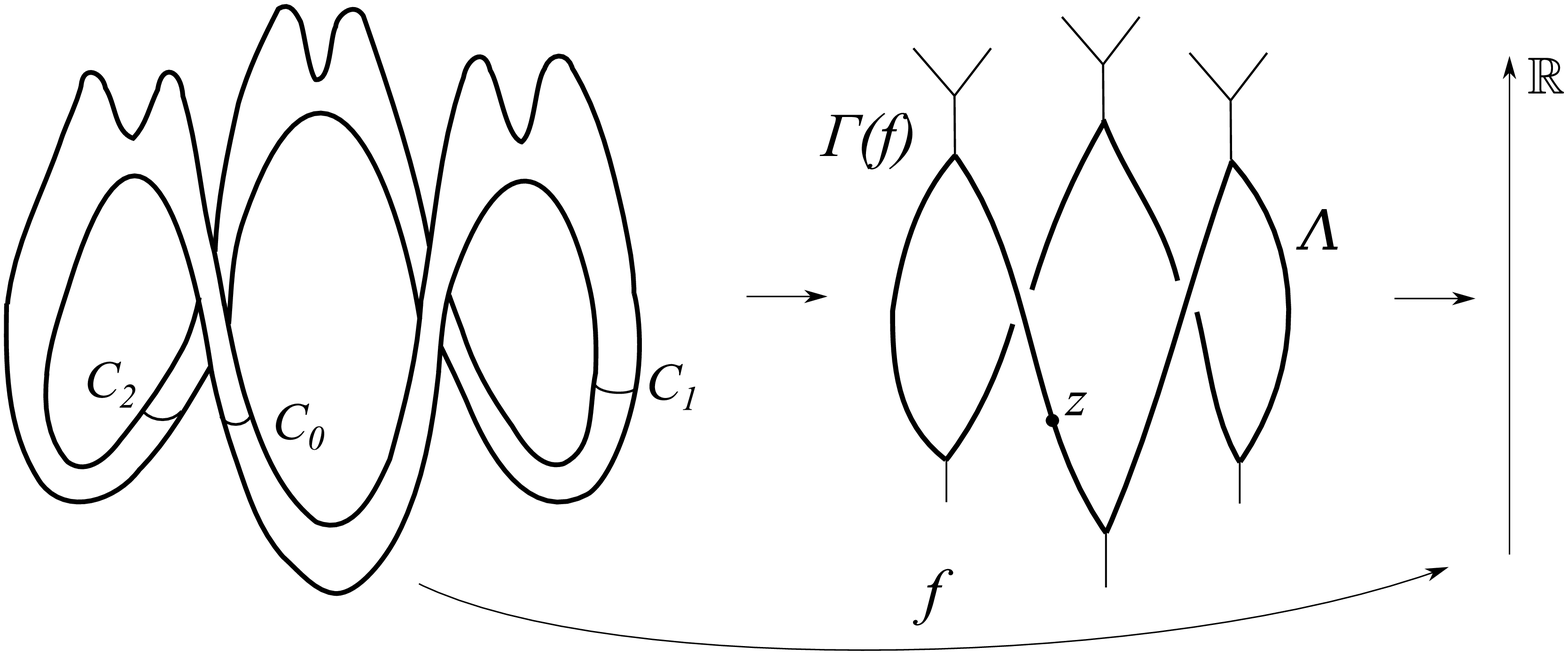}
\caption{}\label{fig:f_curves}
\end{figure}
Let also $\curveMeridian \subset\Torus$ be a regular component of some level set $\func^{-1}(c)$, $c\in\RRR$, and $z$ be the corresponding point on $\fKRGraph$.
It is easy to check, see~\cite{MaksymenkoFeshchenko:MS:2014}, that $z\in\Lambda$ if and only if $\curveMeridian$ does not separate $\Torus$.
Notice that $\func^{-1}(c)$ consists of finitely many connected components and is invariant with respect to each $\dif\in\Stabf$.
Let 
\[
\CrvList = \{\dif(\curveMeridian) \mid \dif\in\StabPrf \}.
\]
be the set of images of $\curveMeridian$ under the action of $\StabPrf = \Stabf \cap \DiffIdT$.
Then $\CrvList$ consists of finitely many connected components of $\func^{-1}(c)$:
\[
\CrvList  \ =  \ \{\, \curveMeridian_0=\curveMeridian, \ \curveMeridian_1, \ \ldots,  \ \curveMeridian_{n-1}\, \}
\]
for some $n\geq1$.
Emphasize that we only consider the images of $\curveMeridian$ for all diffeomorphisms $\dif$ that preserve $\func$ and are \myemph{isotopic to $\CrvList$}.
However, there may exist $\dif\in\Stabf$ that is not isotopic to $\idT$ and such that $\dif(\curveMeridian) \subset \func^{-1}(c) \setminus \CrvList$.

It follows that the curves in $\CrvList$ are mutually disjoint, and neither of them separates $\Torus$, since $\curveMeridian$ does not do this.
Thus they are \myemph{parallel} in the sense of \S\ref{sect:paral_curves}, and therefore we will assume that they are cyclically ordered along $\Torus$, and that $\curveMeridian_i$ and $\curveMeridian_{i+1}$ bound a cylinder $\Cyl_i$ whose interior does not intersect $\CrvList$.

\begin{definition}\label{def:triv_act_S_on_C}
The number $n$ of curves in $\CrvList$ will be called the \myemph{cyclic index} of $\func$.
\end{definition}
It is easy to see that the cyclic index of $\func$ does not depend on a particular choice of a regular component $\curveMeridian$ of some level-set of $\func$ that does not separate $\Torus$.

\medskip

Let $\func|_{\Cyl_0}$ be the restriction of $\func$ onto $\Cyl_0$ and $\Orbit(\func|_{\Cyl_0}, \partial\Cyl_0)$ be the orbit of $\func|_{\Cyl_0}$ with respect to the action of the group $\Diff(\Cyl_0, \partial\Cyl_0)$ of diffeomorphisms of $\Cyl_0$ fixed on some neighbourhood of $\partial\Cyl_0$.
Now we can formulate the main result of the present paper.

\begin{theorem}\label{th:main:pi1Of}{\rm cf.~\cite{MaksymenkoFeshchenko:MS:2014}.}
Let $\func\in\FFF(\Torus)$ be such that $\fKRGraph$ has a cycle, $\curveMeridian$ be a regular connected component of certain level set $\func^{-1}(c)$ of $\func$ that does not separate $\Torus$, $\CrvList = \{\dif(\curveMeridian) \mid \dif\in\StabPrf \}$, and $n$ be the cyclic index of $\func$, i.e. the number of curves in $\CrvList$.

If $n=1$, then there is an isomorphism
\[
\xi: \pi_1\Orbit(\func) \ \cong \ \pi_1\Orbit(\func, \curveMeridian) \ \times \ \ZZZ.
\]
Suppose $n\geq2$ and let $\Cyl_0$ be the cylinder bounded by $\curveMeridian_0$ and $\curveMeridian_1$.
Then we have an isomorphism:
\[
\xi: \pi_1\Orbit(\func) \ \cong \ \pi_1\Orbit(\func|_{\Cyl_0}, \partial\Cyl_0) \ \wrm{n} \ \ZZZ.
\]
\end{theorem}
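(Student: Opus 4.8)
The plan is to combine the standard fibration relating the orbit, the diffeomorphism group and the stabilizer with a ``cutting'' procedure along the family $\CrvList$ that decomposes the problem into the cylinders $\Cyl_i$.

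First I would recall that the orbit map $\DiffIdT\to\Orbit(\func)$, $\dif\mapsto\func\circ\dif$, is a Serre fibration with fiber $\StabPrf=\Stab(\func)\cap\DiffIdT$, see \cite{Maksymenko:AGAG:2006}. Since $\fKRGraph$ has a cycle, $\func$ has critical points and is not a fibration, so $\StabId(\func)$ is contractible and therefore $\pi_k\StabPrf=0$ for $k\geq1$. Together with $\pi_1\DiffIdT\cong\ZZZ^2$ and $\pi_0\DiffIdT=1$ (the identity component of $\DiffT$ retracts onto the torus of translations), the homotopy exact sequence of this fibration collapses to a short exact sequence
\begin{equation*}
1\longrightarrow \ZZZ^2\longrightarrow \pi_1\Orbit(\func)\longrightarrow \pi_0\StabPrf\longrightarrow 1 .
\end{equation*}
Thus everything reduces to describing $\StabPrf$ up to isotopy together with the above extension.

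Next I would analyze the action of $\StabPrf$ on $\CrvList$. Every $\dif\in\StabPrf$ is isotopic to $\idT$, hence orientation preserving, and so permutes the cyclically ordered curves $\curveMeridian_0,\ldots,\curveMeridian_{n-1}$ respecting their cyclic order; this gives a homomorphism $\rho:\StabPrf\to\ZZZ_n$, $\dif(\curveMeridian_i)=\curveMeridian_{i+\rho(\dif)}$, which is onto because $\CrvList$ is by definition the $\StabPrf$-orbit of $\curveMeridian$. On $\pi_1\Orbit(\func)$ this shift lifts canonically to $\ZZZ$: a loop in $\Orbit(\func)$ is covered by an isotopy $\dif_t$ from $\idT$ to some $\dif\in\StabPrf$, and the total signed number of curves of $\CrvList$ swept by $\dif_t(\curveMeridian_0)$ is a well-defined integer congruent to $\rho(\dif)$ modulo $n$. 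This produces the canonical projection $\pi_1\Orbit(\func)\to\ZZZ$ that will play the role of $p$ in~\eqref{equ:exact_seq_for_wreath_prod}, with $\ZZZ\to\ZZZ_n$ recovering $\rho$.

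I would then identify the kernel of this projection with $\Maps(\ZZZ_n,\pi_1\Orbit(\func|_{\Cyl_0},\partial\Cyl_0))$. Diffeomorphisms fixed on a neighbourhood of $\CrvList$ are precisely products of diffeomorphisms of the cylinders $\Cyl_i$ fixed near their boundaries, so $\DiffTC\cong\prod_{i=0}^{n-1}\Diff(\Cyl_i,\partial\Cyl_i)$, whence the orbit of $\func$ under this group is the product of the orbits of the $\func|_{\Cyl_i}$ and
\begin{equation*}
\pi_1\OrbfC\ \cong\ \prod_{i=0}^{n-1}\pi_1\Orbit(\func|_{\Cyl_i},\partial\Cyl_i)\ \cong\ \Maps\bigl(\ZZZ_n,\pi_1\Orbit(\func|_{\Cyl_0},\partial\Cyl_0)\bigr),
\end{equation*}
the last identification using that each $\Cyl_i$ is carried onto $\Cyl_0$ by an element of $\StabPrf$, so all factors are identified with the single group $\pi_1\Orbit(\func|_{\Cyl_0},\partial\Cyl_0)$. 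The inclusion $\DiffTC\hookrightarrow\DiffT$ induces a map $\pi_1\OrbfC\to\pi_1\Orbit(\func)$, whose image I would show equals the kernel of the projection to $\ZZZ$ (here the remark comparing diffeomorphisms fixed on $\CrvList$ with those fixed near $\CrvList$ is used). Finally, choosing a diffeomorphism realizing the shift by one curve yields a section of the projection to $\ZZZ$ whose conjugation action cyclically permutes the $n$ cylinder factors exactly as in~\eqref{equ:action_k_on_alpha}; matching the resulting multiplication with~\eqref{equ:mult_in_wreath_prod} and applying the five lemma to the comparison with~\eqref{equ:exact_seq_for_wreath_prod} gives the desired isomorphism $\xi$ onto $\pi_1\Orbit(\func|_{\Cyl_0},\partial\Cyl_0)\wrm{n}\ZZZ$. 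For $n=1$ cutting along the single non-separating curve yields one cylinder and the orbit $\Orbit(\func,\curveMeridian)$, the group $\ZZZ_1$ is trivial, and the wreath product degenerates to $\pi_1\Orbit(\func,\curveMeridian)\times\ZZZ$.

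I expect the main obstacle to lie in the last two steps: proving that the image of $\pi_1\OrbfC$ is exactly the kernel of the projection to $\ZZZ$, and that the extension splits as the semidirect product~\eqref{equ:mult_in_wreath_prod} rather than some nontrivial extension. Concretely, one must track how the two generators of $\pi_1\DiffIdT\cong\ZZZ^2$ distribute: the rotation transverse to $\CrvList$ sweeps all $n$ curves and maps to $n\in\ZZZ$, while the rotation along the curves has zero shift and must be shown to land in the ``diagonal'' of the cylinder factors, i.e.\ to produce a central element of $\Maps(\ZZZ_n,\pi_1\Orbit(\func|_{\Cyl_0},\partial\Cyl_0))$. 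Verifying that these two loops account for precisely the subgroup $\ZZZ^2$ of the first step, with no further relations, is the delicate point, and it is exactly here that the hypothesis that the curves of $\CrvList$ are parallel and non-separating is essential.
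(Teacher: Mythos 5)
Your outline follows essentially the same route as the paper: reduce via the Serre fibration to the relative group $\pi_1(\DiffIdT,\StabPrf)$, build a degree homomorphism $\eval$ to $\ZZZ$ refining the $\ZZZ_n$-valued permutation $\krot$ of the curves, identify $\ker(\eval)$ with $\Maps\bigl(\ZZZ_n,\pi_1\Orbit(\func|_{\Cyl_0},\partial\Cyl_0)\bigr)$ by cutting along $\CrvList$, and finish with the five lemma. The points you flag as delicate are indeed where the work lies, and your plan is missing the one construction that makes them go through: a diffeomorphism $\gdif\in\StabPrf$ with $\gdif(\Cyl_i)=\Cyl_{i+1}$ and, crucially, $\gdif^{n}=\idT$ (part (c) of Theorem~\ref{th:two_epimorphisms}, obtained by first arranging $\gdif$ to coincide with the rigid rotation $x\mapsto x+\tfrac{1}{n}$ near $\CrvList$ and then correcting it on the last cylinder $\Cyl_{n-1}$). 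Without exact periodicity, the identification of the $i$-th cylinder factor with the $0$-th via conjugation by $\gdif^{i}$ is inconsistent at the wrap-around $i=n\equiv 0$, and the conjugation action of your section on the kernel is only the shift~\eqref{equ:action_k_on_alpha} up to an inner automorphism; with $\gdif^{n}=\idT$ the class $[\gamma^{n}]$ lands in the central image of $\pi_1\DiffIdT$ and the period-$n$ action is automatic.

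The second genuine gap is the exactness statement $\image\bigl(\pi_1\Orbit_{\func}(\func,\CrvList)\to\pi_1\Orbit(\func)\bigr)=\ker(\eval)$ together with injectivity of this map. In the paper this rests on two computations your plan does not indicate how to perform: (i) the kernel of $\pi_0\Stab'(\func,\CrvList)\to\pi_0\Stab'(\func)$ is infinite cyclic, generated by the product $\slide=\slide_0\circ\cdots\circ\slide_{n-1}$ of slides along the curves (Lemma~\ref{lm:ker_j0}, which uses the shift-function representation $\dif=\flow_{\alpha}$ of elements of $\StabIdf$ along a Hamiltonian-like flow); and (ii) the meridian rotation loop $\flowMeridian$ maps under $\qhom$ exactly to $\jO\circ\dc^{-1}\bigl(\cclass{\slide^{-1}}\bigr)$ (statement (a) of Theorem~\ref{th:two_epimorphisms}). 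Your remark that the rotation along the curves must be shown to land "in the diagonal" is the right instinct, but (i) and (ii) are the substance of the proof rather than routine verifications; as written, the proposal is a correct plan with its two central lemmas left unproved.
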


For $n=1$ this theorem is proved in~\cite{MaksymenkoFeshchenko:MS:2014}, therefore we will assume that $n\geq2$.

\subsection{Structure of the paper}
In \S\ref{sect:hom_types} we recall some results about the homotopy types of stabilizers and orbits of $\func\in\FFF(\Mman)$, and in \S\ref{sect:mult_in_rel_pi1} present some formulae for the multiplication in the relative homotopy group $\pi_1(D,S)$, where $D$ is a topological group and $S$ is its subgroup.

In \S\ref{sect:parallel_curves_on_t2} we consider families of parallel curves on $2$-torus and relations between Dehn twists and slides along these curves.
Given $\func\in\FFF(\Torus)$ such that its KR-graph has one cycle, we introduce in~\S\ref{sect:some_constructions} some special coordinates and flows adopted with $\func$.
In \S\ref{sect:two_epimorphisms} we define two epimorphisms $\eval:\pi_1(\DiffT,\StabPrf)\to\ZZZ$ and $\krot:\pi_0\StabPrf\to\ZZZ_n$ and study their properties, see Theorem~\ref{th:two_epimorphisms}.

As an interpretation of (c) Theorem~\ref{th:two_epimorphisms} we show in~\S\ref{sect:free_zn_action} that there exists a $\func$-invariant $\ZZZ_n$-action on $\Torus$, see Theorem~\ref{th:f_invar_zn_action}.
This interpretation is not used in the paper, but it gives a new view point of such functions $\func$.
Finally, in \S\ref{sect:proof:th:main:pi1Of} we complete Theorem~\ref{th:main:pi1Of}.

\section{Homotopy types of $\Stabf$ and $\Orbf$}\label{sect:hom_types}
Let $f\in\FFF(\Mman)$ and $\Xman$ be a finite (possibly empty) union of regular components of some level sets of $\func$.
We will briefly recall description of the homotopy types of $\StabfX$ and $\OrbfX$.

\begin{theorem}\label{th:serre_fibr}
{\rm\cite{Sergeraert:ASENS:1972, Maksymenko:AGAG:2006, Maksymenko:UMZ:ENG:2012}.}
The following map
\[
 \SerreFibr:\DiffMX \longrightarrow \OrbfX, \qquad \SerreFibr(h) = f \circ h.
\]
is a Serre fibration with fiber $\StabfX$, that is it has a homotopy lifting property for CW-complexes.

Hence $\SerreFibr(\DiffIdMX) = \OrbffX$ and the restriction map 
\begin{equation}\label{equ:fibr_pX}
\SerreFibr|_{\DiffIdMX}:\DiffIdMX \longrightarrow \OrbffX
\end{equation}
is also a Serre fibration with fiber $\StabPrfX=\Stabf\cap\DiffIdMX$;

Moreover, for each $k\geq0$ there is an isomorphism 
\[\fibr_k: \pi_k \bigl( \DiffMX, \StabfX \bigr) \to \pi_k \OrbfX\]
defined by $\fibr_k[\omega] = [f\circ\omega]$ for a continuous map $\omega:(I^k, \partial I^k, 0) \to \bigl( \DiffM, \Stabf, \id_{\Mman} \bigr)$, and making commutative the following diagram
\[
\xymatrix{ 
\cdots \ar[r] & \pi_k\DiffMX \ar[r]^-{\qhom} \ar[rd]_{p} & \pi_k\bigl(\DiffMX,\StabfX\bigr)\ar[d]^{\fibr_k}_{\cong} \ar[r]^-{\partial} & \pi_{k-1} \StabfX \ar[r] & \cdots \\
 & & \pi_k\OrbfX \ar[ru]_{\partial \circ \fibr_k^{-1}}
}
\]
see for example \cite[\S~4.1, Theorem~4.1]{Hatcher:AlgTop:2002}.
\end{theorem}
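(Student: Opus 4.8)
The plan is to recognise $\SerreFibr$ as the orbit map of a topological group action and to reduce the Serre fibration property to the construction of a single local section near $\func$; the fibration statement then follows formally by homogeneity, and the remaining assertions are purely homotopy-theoretic consequences. Indeed, $\DiffMX$ is a topological group acting continuously on $C^{\infty}(\Mman)$ (in the Whitney $C^{\infty}$-topology, where composition and inversion are continuous) by the rule $h\mapsto \func\circ h$, and $\SerreFibr$ is precisely the orbit map of $\func$. Its point-preimages are the cosets of $\StabfX$: if $\func\circ h=\func\circ h'$ then $\func\circ(h\circ h'^{-1})=\func$, so $h\circ h'^{-1}\in\StabfX$; in particular $\SerreFibr^{-1}(\func)=\StabfX$ and every fibre is a translate of it.

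\textbf{Local-to-global step.} Suppose one has a continuous local section at $\func$, i.e.\ a neighbourhood $U$ of $\func$ in $\OrbfX$ and a continuous map $s\colon U\to\DiffMX$ with $\func\circ s(g)=g$ for all $g\in U$ and $s(\func)=\id$. Then the map $\StabfX\times U\to \SerreFibr^{-1}(U)$, $(\sigma,g)\mapsto \sigma\circ s(g)$, is a homeomorphism with inverse $h\mapsto \bigl(h\circ s(\SerreFibr(h))^{-1},\,\SerreFibr(h)\bigr)$, so $\SerreFibr$ is trivial over $U$ with fibre $\StabfX$. Translating by an arbitrary $h_0\in\DiffMX$, the map $g'\mapsto s(g'\circ h_0^{-1})\circ h_0$ is a local section over the neighbourhood $\{g\circ h_0\mid g\in U\}$ of $\func\circ h_0$; since every point of $\OrbfX$ has the form $\func\circ h_0$, this exhibits $\SerreFibr$ as a locally trivial fibre bundle, and locally trivial bundles satisfy the homotopy lifting property for cubes, hence are Serre fibrations.

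\textbf{The main obstacle.} The heart of the matter is therefore the construction of the local section $s$, and this is where properties (B) and (L) defining $\FFF(\Mman)$ are essential. The idea is that for $g\in\OrbfX$ sufficiently $C^{\infty}$-close to $\func$ one builds, depending continuously on $g$, a diffeomorphism $s(g)$ carrying the level sets of $g$ to those of $\func$ with matching critical values, normalised so that $s(\func)=\id$. On the regular part this is a straightening of level sets produced by the implicit function theorem together with integration of a gradient-like vector field; the delicate region is a neighbourhood of each critical point, where the level set is singular. Here property (L) — that the germ of $\func$ at each critical point is a homogeneous polynomial without multiple factors — supplies a controlled normal form in which a nearby $g$ is corrected to $\func$ by a diffeomorphism depending continuously on $g$ and equal to the identity when $g=\func$; gluing these local corrections to the straightening on the regular part via a partition of unity yields $s$. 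Establishing the continuity of this construction in the Whitney topology is the genuine analytic content, and constitutes exactly the work carried out in the cited papers of Sergeraert and of the first author.

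\textbf{Deducing the remaining assertions.} These follow formally. As $\DiffIdMX$ is the path component of $\id$, its image $\SerreFibr(\DiffIdMX)$ is path-connected and contains $\func=\SerreFibr(\id)$, so lies in $\OrbffX$; conversely any $g\in\OrbffX$ is joined to $\func$ by a path which, lifted through the fibration starting at $\id$, ends at some $h\in\DiffIdMX$ with $\SerreFibr(h)=g$, giving $\SerreFibr(\DiffIdMX)=\OrbffX$. Restricting a Serre fibration to a path component of its total space again yields a Serre fibration onto the corresponding component of the base, since a lift of a homotopy over a connected cube starting in that component stays in it; the fibre over $\func$ is $\StabfX\cap\DiffIdMX=\StabPrfX$. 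For the last statement, for any fibration $\rho\colon E\to B$ with fibre $F=\rho^{-1}(b_0)$ the induced map $\rho_{\ast}\colon\pi_k(E,F,e_0)\to\pi_k(B,b_0)$ is an isomorphism; applying this with $E=\DiffMX$, $F=\StabfX$, $e_0=\id$, $B=\OrbfX$ gives $\fibr_k[\omega]=[\func\circ\omega]$, since $\func\circ\omega=\SerreFibr\circ\omega$. Commutativity of the upper triangle, $\fibr_k\circ\qhom=\SerreFibr_{\ast}$, is immediate, because a class of $\pi_k\DiffMX$ sends $\partial I^k$ to $\id\in\StabfX$ and $\qhom$ is induced by the inclusion of pairs; the identification of $\partial\circ\fibr_k^{-1}$ with the connecting homomorphism of the fibration is then the naturality of the long exact sequences of the pair $(\DiffMX,\StabfX)$ and of the fibration $\SerreFibr$.
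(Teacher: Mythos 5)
The paper itself offers no proof of this theorem: it is quoted from the cited works of Sergeraert and the first author, where the fibration property is indeed obtained by constructing a continuous local section of the orbit map and invoking homogeneity. Your outline reproduces that standard route correctly — the reduction to a local section, the translation argument giving local triviality, and the formal deduction of the remaining assertions (surjectivity onto $\OrbffX$ via path lifting, restriction to the identity component, and the isomorphism $\fibr_k$ from the long exact sequence of a fibration, cf.\ Hatcher) — and you correctly identify the genuine analytic content (continuity of the section near critical points, where properties (B) and (L) enter) as the part carried out in the cited references, exactly as the paper does.
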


\begin{theorem}\label{th:fibration_DMX_Of}
{\rm \cite{Maksymenko:AGAG:2006, Maksymenko:ProcIM:ENG:2010, Maksymenko:UMZ:ENG:2012}.}
$\Orbit_{f}(f, \Xman) = \Orbit_{f}(f,\Xman\cup\partial\Mman)$, and so 
\[
\pi_k \Orbit(f, \Xman) \ \cong \ \pi_k \Orbit(f, \Xman \cup\partial\Mman), \qquad k\geq1.
\]
Suppose either $\func$ has a critical point which is not a {\bfseries nondegenerate local extremum} or $\Mman$ is a non-orientable surface.
Then $\StabIdf$ is contractible, $\pi_n\Orbf = \pi_n\Mman$ for $n\geq3$, $\pi_2\Orbf=0$, and for $\pi_1\Orbf$ we have the following short exact sequence of fibration $\SerreFibr$:
\begin{equation}\label{equ:pi1Of_exact_sequence}
 1 \longrightarrow \pi_1\DiffM \xrightarrow{~~\SerreFibr~~} \pi_1\Orbf \xrightarrow{~~\partial\circ \fibr^{-1}_1~~} \pi_0\StabPrf\longrightarrow 1.
\end{equation}
Moreover, $\SerreFibr\bigl(\pi_1\DiffM\bigr)$ is contained in the center of $\pi_1\Orbf$.

If either $\chi(\Mman)<0$ or $\Xman\not=\varnothing$.
Then $\DiffIdMX$ and $\StabIdfX$ are contractible, whence from the exact sequence of homotopy groups of the fibration~\eqref{equ:fibr_pX} we get $\pi_k\OrbfX=0$ for $k\geq2$, and that the boundary map 
\[ \partial\circ \fibr^{-1}_1: \pi_1\OrbfX \ \longrightarrow \ \pi_0 \StabPrfX \]
is an isomorphism.
\end{theorem}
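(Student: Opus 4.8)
The statement assembles several results from the cited papers, and the unifying device is the long exact homotopy sequence of the Serre fibration of Theorem~\ref{th:serre_fibr}; my plan is to feed each assertion into that sequence. For the opening identity $\OrbffX=\Orbit_\func(\func,\Xman\cup\partial\Mman)$ the idea is that every path in $\OrbfX$ starting at $\func$ may be realized by diffeomorphisms that are, in addition, fixed near $\partial\Mman$. By property~(B) the function $\func$ is constant on each component of $\partial\Mman$, so in a collar of $\partial\Mman$ the level sets of $\func$ are parallel copies of the boundary; using this collar and its ``vertical'' flow one absorbs any boundary motion of a diffeomorphism into an isotopy keeping us in the same path-component of the orbit, producing a deformation of $\DiffIdM$ into $\DiffId(\Mman,\partial\Mman)$ compatible with $\SerreFibr$. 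This gives the equality of path-components, and hence the isomorphism of $\pi_k$ for $k\geq1$.

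The technical heart, and the step I expect to be the genuine obstacle, is the contractibility of $\StabIdf$ under the stated hypotheses (a critical point that is not a nondegenerate local extremum, or $\Mman$ non-orientable). This is exactly what the cited papers establish: one constructs an explicit deformation of $\StabIdf$ onto $\{\id\}$ from a gradient-like flow adapted to $\func$ together with the combinatorics of $\fKRGraph$, the presence of a saddle (or the non-orientability) being precisely what rules out the torus-like symmetries that would otherwise create nontrivial higher homotopy. I would invoke this result as a black box.

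Granting that $\StabIdf$ is contractible, I run the long exact sequence of $\SerreFibr|_{\DiffIdM}\colon\DiffIdM\to\Orbff$ with fiber $\StabPrf$. Then $\pi_k\StabPrf=0$ for $k\geq1$, so the sequence yields $\pi_k\Orbff\cong\pi_k\DiffIdM$ for $k\geq2$; inserting the classical computation of the homotopy type of $\DiffIdM$ for each relevant surface---all of which satisfy $\pi_2\DiffIdM=0$ and $\pi_k\DiffIdM\cong\pi_k\Mman$ for $k\geq3$---gives $\pi_2\Orbf=0$ and $\pi_n\Orbf=\pi_n\Mman$ for $n\geq3$. The low-degree part of the same sequence, using $\pi_1\StabPrf=0$, $\pi_0\DiffIdM=0$ and $\pi_1\DiffIdM=\pi_1\DiffM$, collapses to the short exact sequence~\eqref{equ:pi1Of_exact_sequence}. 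Centrality of $\SerreFibr(\pi_1\DiffM)$ then follows from $\DiffM$ being a topological group: $\pi_1\DiffM$ is abelian, and being the image of loops in the acting group it can be slid past any loop of the base by translation inside $\DiffM$, so it commutes with all of $\pi_1\Orbf$.

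Finally, when $\chi(\Mman)<0$ or $\Xman\neq\varnothing$, the group $\DiffIdMX$ is contractible---by Earle--Eells and Gramain when $\chi(\Mman)<0$, and because fixing a neighbourhood of a nonempty $\Xman$ destroys the remaining symmetries otherwise---and $\StabIdfX$ is contractible as well. Feeding both contractibilities into the long exact sequence of the fibration~\eqref{equ:fibr_pX} kills $\pi_k\OrbfX$ for $k\geq2$ and reduces the connecting homomorphism $\partial\circ\fibr_1^{-1}$ to the asserted isomorphism $\pi_1\OrbfX\cong\pi_0\StabPrfX$.
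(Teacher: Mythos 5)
The paper states this theorem without proof, as a recollection of results from the cited papers of Maksymenko, so there is no in-text argument to compare against. Your reconstruction is correct and follows the same route those sources take: the genuinely hard inputs (contractibility of $\StabIdf$ under the saddle/non-orientability hypothesis, and of $\DiffIdMX$ when $\chi(\Mman)<0$ or $\Xman\not=\varnothing$) are rightly identified and imported as black boxes, and the remaining assertions, including the centrality of $\SerreFibr(\pi_1\DiffM)$ via the interchange trick of \S\ref{sect:mult_in_rel_pi1}, follow from the long exact sequence of the fibration exactly as you describe.
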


Suppose $\Mman$ is differs from $2$-sphere $S^2$, $2$-torus, projective plane, and Klein bottle, and let $\Xman = \partial\Mman$.
Then $\Mman$ and $\Xman$ satisfy assumptions of Theorem~\ref{th:fibration_DMX_Of}, and we get the following isomorphisms
\[
\pi_1\Orbf \ \cong \ \pi_1\Orbit(f, \partial\Mman) \ \cong \ \pi_0 \Stab'(f,\partial\Mman).
\]
A possible structure of $\pi_0\Stab'(f,\partial\Mman)$ for this case is completely described in~\cite{Maksymenko:pi1Repr:2014}.

However in the remained four cases of $\Mman$ we have that $\pi_1\DiffM \not=0$, and all terms in the short exact sequence~\eqref{equ:pi1Of_exact_sequence} can be non-trivial.

In particular, suppose $\Mman=\Torus$.
Then the sequence~\eqref{equ:pi1Of_exact_sequence} has the following form:
\begin{equation}\label{equ:T2_pi1Of_exact_sequence}
1 \longrightarrow \ZZZ^2 \xrightarrow{~~p~~} \pi_1\Orbff \xrightarrow{~~\partial~~} \pi_0 \StabPrf \longrightarrow 1.
\end{equation}
It is shown in~\cite{MaksymenkoFeshchenko:UMZ:ENG:2014} that if a KR-graph $\fKRGraph$ of $\func\in\FFF(\Torus)$ is a tree, then under some additional ``triviality'' assumption on the action $\StabPrf$ on $\fKRGraph$, the sequence~\eqref{equ:T2_pi1Of_exact_sequence} splits.

Moreover, in~\cite{MaksymenkoFeshchenko:MS:2014} the authors considered the case when $\fKRGraph$ of $\func\in\FFF(\Torus)$ has one cycle, and $\func$ has cyclic index $n=1$.

\section{Multiplication in $\pi_1(D,S,e)$}\label{sect:mult_in_rel_pi1}
Let $D$ be a topological space, $S$ be its subset, and $e\in S$ be a point.
Then, in general, the relative homotopy set $\pi_1(D,S,e)$, as well as $\pi_0 (D,e)$ and $\pi_0(S,e)$ \myemph{\bfseries have no natural group structure}.
However, if $D$ is a topological group, $S$ is a subgroup of $D$, and $e$ is the unit of $D$, then such group structures exist.
We leave the following lemma for the reader.
\begin{lemma}\label{lm:DSe_group_structures}{\rm cf.~\cite[Ch~1, \S4]{FomenkoFuks:HomotTopology:1989}}
Let $D$ be a topological group with multiplication $\circ$, $S$ be a subgroup of $D$, and $e$ be the unit of $D$.
Then $\pi_0 (D,e)$, $\pi_1(D,S,e)$, $\pi_0 (S,e)$ have a group structures such that in the corresponding sequence of homotopy groups of the triple $(D,S,e)$:
\[
\cdots \to \pi_1 (D,e) \xrightarrow{~\qhom~} \pi_1 (D,S,e) \xrightarrow{~\partial~} \pi_0 (S,e) \xrightarrow{~i~} \pi_0 D \to \cdots
\]
the maps $\qhom$, $\partial$, and $i$ are homomorphisms.
Moreover $\qhom(\pi_1(D,e))$ is contained in the centre of $\pi_1 (D,S,e)$.
\qed
\end{lemma}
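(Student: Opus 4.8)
The plan is to exploit the multiplication of $D$ to put a \emph{pointwise} group structure on the relative set $\pi_1(D,S,e)$, after which every assertion reduces to a short manipulation of representing paths. Throughout I represent a class in $\pi_1(D,S,e)$ by a path $\omega\colon I\to D$ with $\omega(0)=e$ and $\omega(1)\in S$, taken up to homotopy through such \emph{admissible} paths; a class in $\pi_1(D,e)$ by a loop at $e$; and I recall that $\pi_0(D,e)=D/D_0$ and $\pi_0(S,e)=S/S_0$ are groups, where $D_0,S_0$ are the path components of the unit (these are normal subgroups, since the path component of the unit of a topological group is normal, and the product of two components is well defined as $[g][h]=[gh]$). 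The map $i$ is induced by the inclusion $S\hookrightarrow D$ and is therefore a homomorphism of these quotient groups.

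The first genuine step is the group structure on $\pi_1(D,S,e)$. For admissible paths $\omega,\omega'$ I set $(\omega\cdot\omega')(t)=\omega(t)\circ\omega'(t)$. Because $S$ is a subgroup one has $(\omega\cdot\omega')(0)=e$ and $(\omega\cdot\omega')(1)=\omega(1)\circ\omega'(1)\in S$, so the product is again admissible; multiplying two admissible homotopies pointwise (again using that $S$ is a subgroup) shows that $\cdot$ descends to homotopy classes. The constant path at $e$ is the unit, the pointwise inverse $t\mapsto\omega(t)^{-1}$ is the inverse (here continuity of inversion in $D$ is used), and associativity is inherited from $D$. This is the only place where the hypothesis that $D$ is a \emph{group}, and not merely an H-space, is essential. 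With this structure the boundary map becomes $\partial[\omega]=[\omega(1)]\in\pi_0(S,e)$, and $\partial[\omega\cdot\omega']=[\omega(1)\circ\omega'(1)]=\partial[\omega]\cdot\partial[\omega']$, so $\partial$ is a homomorphism.

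Next I would show $\qhom$ is a homomorphism by an Eckmann--Hilton argument. On $\pi_1(D,e)$ the concatenation $\star$ and the pointwise product $\cdot$ share the unit (the constant loop at $e$) and satisfy the interchange law
\[
(\alpha\star\beta)\cdot(\gamma\star\delta)=(\alpha\cdot\gamma)\star(\beta\cdot\delta)
\]
\emph{on the nose}. Hence the two operations coincide and are abelian. Since $\qhom$ only reinterprets a loop as an admissible path and the product on $\pi_1(D,S,e)$ is pointwise, $\qhom$ carries $\star$ to $\cdot$ and is a homomorphism; well-definedness of $\qhom$ is clear because a based homotopy of loops is a homotopy through admissible paths (the endpoints stay at $e\in S$).

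The main point, and the step I expect to be the crux, is the centrality of $\qhom(\pi_1(D,e))$. Given a loop $\alpha$ at $e$ and an admissible path $\omega$, I reparametrize $\alpha$ to a homotopic loop $\tilde\alpha$ supported in $[0,\tfrac12]$ (equal to $e$ on $[\tfrac12,1]$) and $\omega$ to a homotopic admissible path $\tilde\omega$ supported in $[\tfrac12,1]$. On each half one factor is the constant $e$, so the pointwise products $\tilde\alpha\cdot\tilde\omega$ and $\tilde\omega\cdot\tilde\alpha$ are \emph{literally the same} path, namely the concatenation $\alpha\star\omega$; therefore $[\alpha\cdot\omega]=[\tilde\alpha\cdot\tilde\omega]=[\tilde\omega\cdot\tilde\alpha]=[\omega\cdot\alpha]$, which is the desired centrality. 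The delicate bookkeeping here is precisely to verify that these reparametrizations are homotopies through \emph{admissible} paths (the conditions $\omega(0)=e$ and $\omega(1)\in S$ must be preserved throughout) and that the pointwise product of two admissible homotopies remains admissible, both of which rest once more on $S$ being a subgroup. The exactness of the displayed sequence is the standard long exact sequence of the triple $(D,S,e)$ and requires no modification.
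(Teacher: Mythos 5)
Your proof is correct, and it is worth noting at the outset that the paper gives no proof of this lemma at all --- it is stated with a pointer to Fomenko--Fuks and explicitly ``left for the reader'' --- so your argument fills a gap rather than paralleling an existing one. The one substantive point to flag is a difference of convention: you \emph{define} the product on $\pi_1(D,S,e)$ pointwise, $(\omega\cdot\omega')(t)=\omega(t)\circ\omega'(t)$, whereas the multiplication the paper actually computes with (introduced immediately after the lemma) is ``concatenate, then left-translate the second path by $\omega(1)$'', i.e.\ $r(t)=\omega(2t)$ for $t\le\tfrac12$ and $r(t)=\omega(1)\circ\omega'(2t-1)$ for $t\ge\tfrac12$. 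These yield the same group structure: the equality of the two products on homotopy classes is precisely identity~\eqref{equ:mult_pi1Of_prod} of Lemma~\ref{lm:relations_in_rel_pi1}, proved there by restricting $\gamma(s,t)=\omega(s)\circ\omega'(t)$ to the diagonal versus the sides of the square, and it also follows from your own reparametrization trick if on the second half you hold the first factor at $\omega(1)$ rather than at $e$. Since the rest of the paper manipulates classes via the concatenation formula, you should record this compatibility explicitly rather than leave it implicit. Everything else is sound and is essentially the intended argument: the quotient descriptions of $\pi_0(D,e)$ and $\pi_0(S,e)$, the observation that admissibility of the pointwise product and inverse rests exactly on $S$ being a subgroup, the Eckmann--Hilton interchange argument showing that $\qhom$ is a homomorphism (and that $\pi_1(D,e)$ is abelian), the formula $\partial[\omega]=[\omega(1)]$ making $\partial$ a homomorphism, and the disjoint-support computation for the centrality of $\qhom(\pi_1(D,e))$, which you correctly identify as the only step requiring care about homotopies remaining admissible.
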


In what follows we will assume that $D$, $S$, and $e$ are the same as in Lemma~\ref{lm:DSe_group_structures}.
We will recall a formula for the multiplication in $\pi_1(D,S,e)$.

Let $\gdif,\dif:(I,\partial I, 0) \to (D,S,e)$ be two paths representing some elements of $\pi_1(D,S,e)$.
For simplicity we will denote $\gdif(t)$ by $\gdif_t$ and similarly for $\dif$.
The class of $[\gdif]\in\pi_1(D,S,e)$ will also be denoted by $[\gdif_t]$.
Define another path $r:(I,\partial I, 0) \to (D,S,e)$ by
\[
r(t) =
\begin{cases}
\gdif_{2t}, & t\in[0,\tfrac{1}{2}], \\
\gdif_1 \circ \dif_{2t-1}, & t\in[\tfrac{1}{2},1].
\end{cases}
\]
Then $[r_t] = [\gdif_t]\pmult[\dif_t]$ in $\pi_1(D,S,e)$.

As an immediate consequence of this formula we get the following lemma:
\begin{lemma}\label{lm:prod_in_rel_pi1}
Let $\gdif,\dif:I \to D$ be two paths such that $\gdif(0)=e$, $\gdif(1)=\dif(0) \in S$ and $\dif(1)\in S$ as well, and $s:(I,\partial I, 0) \to (D,S,e)$ be a path defined by 
\[
s(t) =
\begin{cases}
\gdif_{2t}, & t\in[0,\tfrac{1}{2}], \\
\dif_{2t-1}, & t\in[\tfrac{1}{2},1],
\end{cases}
\]
so it is obtained by joining $\gdif$ and $\dif$, see Figure~\ref{fig:paths_in_the_square}(a).
Then 
\begin{equation}\label{equ:path_pull_back}
[s_t] = [\gdif_t]\pmult[\gdif_1^{-1}\circ\dif_t]
\end{equation}
 in $\pi_1(D,S,e)$, where $[\gdif_1^{-1}\circ\dif_t]$ is a class of a path $(I,\partial I, 0) \to (D,S,e)$ defined by $t\mapsto \gdif_1^{-1}\circ\dif_t$.
\qed
\end{lemma}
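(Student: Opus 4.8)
The plan is to read this off directly from the concatenation formula for the product in $\pi_1(D,S,e)$ recorded just above the statement; the only real point is to choose the second factor correctly. First I would set $\tilde\dif_t := \gdif_1^{-1}\circ\dif_t$ and verify that this is a legitimate representative of a class in $\pi_1(D,S,e)$. Its initial value is $\tilde\dif_0 = \gdif_1^{-1}\circ\dif_0 = \gdif_1^{-1}\circ\gdif_1 = e$, using the matching hypothesis $\dif(0)=\gdif(1)=\gdif_1$; and since $\gdif_1\in S$, $\dif(1)\in S$, and $S$ is a subgroup of $D$, the terminal value $\tilde\dif_1 = \gdif_1^{-1}\circ\dif_1$ again lies in $S$. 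Thus $\tilde\dif:(I,\partial I,0)\to(D,S,e)$ is admissible, and $\gdif_t$ is admissible by hypothesis, so both classes $[\gdif_t]$ and $[\tilde\dif_t]$ are defined.

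Next I would apply the multiplication formula to the ordered pair $(\gdif_t,\tilde\dif_t)$. By that formula $[\gdif_t]\pmult[\tilde\dif_t]=[r_t]$, where $r$ is obtained by concatenating $\gdif$ with the left-translate $\gdif_1\circ\tilde\dif$. The whole purpose of inserting the factor $\gdif_1^{-1}$ is that this left translation now cancels it: on the second half one has $\gdif_1\circ\tilde\dif_{2t-1} = \gdif_1\circ\gdif_1^{-1}\circ\dif_{2t-1} = \dif_{2t-1}$. Hence $r$ coincides termwise with the path $s$ from the statement, so they are literally the same path, not merely homotopic, and therefore $[s_t]=[\gdif_t]\pmult[\gdif_1^{-1}\circ\dif_t]$, which is exactly \eqref{equ:path_pull_back}.

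There is essentially no hard step here; the result is a bookkeeping consequence of the multiplication rule already stated. The only things to be careful about are the two endpoint checks that make $\tilde\dif$ an honest relative path — which is precisely where the subgroup hypothesis on $S$ and the matching condition $\gdif(1)=\dif(0)$ enter — and the continuity of the concatenations at $t=\tfrac12$, which holds because $\gdif_1 = \gdif_1\circ\tilde\dif_0 = \dif_0$. I expect the only mild subtlety a reader might stumble over is conceptual rather than technical: recognizing why the naive guess $[s_t]=[\gdif_t]\pmult[\dif_t]$ fails (the path $\dif_t$ need not begin at $e$, so $[\dif_t]$ is not even a relative class), and why correcting $\dif_t$ to $\gdif_1^{-1}\circ\dif_t$ is the forced remedy.
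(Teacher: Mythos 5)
Your argument is correct and is exactly the one the paper intends: the lemma is stated with a \qed as ``an immediate consequence'' of the multiplication formula for $\pi_1(D,S,e)$, and your substitution of $\gdif_1^{-1}\circ\dif_t$ into that formula, together with the endpoint checks using $\dif_0=\gdif_1$ and the subgroup property of $S$, is precisely that consequence. Nothing further is needed.
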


\begin{lemma}\label{lm:relations_in_rel_pi1}
Let $\gdif_t, \dif_t:(I,\partial I, 0) \to (D,S,e)$ be two paths.
Then in $\pi_1(D,S,e)$ we have the following identities:
\begin{gather}
\label{equ:mult_pi1Of_prod}
[\gdif_t\circ\dif_t] = 
[\gdif_s]\pmult[\dif_t] =
[\dif_t]\pmult[\dif_1^{-1}\circ\gdif_s \circ\dif_1],
 \\
\label{equ:mult_pi1Of_conj}
[\dif_t]\pmult[\gdif_s]\pmult[\dif_t^{-1}] = [\dif_1^{-1}\circ\gdif_s\circ\dif^{-1}],
\end{gather}
where $[\gdif_t\circ\dif_t]$ means the class of the path $(I,\partial I, 0) \to (D,S,e)$ given by $t\mapsto\gdif_t\circ\dif_t$, and similarly for all other classes.
\end{lemma}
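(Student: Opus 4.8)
The plan is to reduce everything to a single principle: in $\pi_1(D,S,e)$ the group product of two relative classes is represented by the \emph{pointwise} product of representing paths, i.e. $[\gdif_t]\pmult[\dif_t] = [\gdif_t\circ\dif_t]$, which is exactly the first equality in~\eqref{equ:mult_pi1Of_prod}. I would establish this by an explicit homotopy in the square interpolating between the concatenation product defined just before the lemma and the pointwise product. Concretely, set $F(t,u) = \gdif_{\phi(t,u)}\circ\dif_{\psi(t,u)}$, where $\phi,\psi:I\times I\to I$ are the reparametrizations $\phi(t,u) = (1-u)t + u\min(2t,1)$ and $\psi(t,u) = (1-u)t + u\max(2t-1,0)$. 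At $u=0$ this is the pointwise path $\gdif_t\circ\dif_t$; at $u=1$ it is precisely the path $r$ from the definition of multiplication (first $\gdif_{2t}$, then $\gdif_1\circ\dif_{2t-1}$), whence $[F(\cdot,1)] = [\gdif_t]\pmult[\dif_t]$. The relative boundary conditions are immediate: $F(0,u) = \gdif_0\circ\dif_0 = e$ and $F(1,u) = \gdif_1\circ\dif_1\in S$ is even constant in $u$, so $F$ is an admissible homotopy of maps $(I,\partial I,0)\to(D,S,e)$.

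Granting this principle, I would next record that the pointwise-inverse path represents the group inverse: since $[\dif_t]\pmult[\dif_t^{-1}] = [\dif_t\circ\dif_t^{-1}] = [e]$, we have $[\dif_t^{-1}] = [\dif_t]^{-1}$ in $\pi_1(D,S,e)$. Then both remaining identities become \emph{conjugator-sliding} statements. For~\eqref{equ:mult_pi1Of_conj}, I rewrite the left side by the principle as the class of the pointwise path $t\mapsto \dif_t\circ\gdif_t\circ\dif_t^{-1}$, and then homotope the time-varying conjugating factor $\dif_t$ to its constant endpoint value $\dif_1$. This is done with $G(t,u) = \gamma(t,u)\circ\gdif_t\circ\gamma(t,u)^{-1}$, where $\gamma(t,u) = \dif_{\,t+u(1-t)}$ runs from $\gamma(t,0) = \dif_t$ to $\gamma(t,1) = \dif_1$. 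Because $\gdif_0 = e$, the value $G(0,u) = \gamma(0,u)\circ e\circ\gamma(0,u)^{-1} = e$ collapses to the basepoint regardless of $\gamma(0,u)$, while $G(1,u) = \dif_1\circ\gdif_1\circ\dif_1^{-1}\in S$ is constant in $u$; hence $G$ is admissible and the left side equals the class obtained by conjugating $\gdif$ with the constant endpoint value $\dif_1\in S$ in place of the varying $\dif_t$, which is the right-hand side of~\eqref{equ:mult_pi1Of_conj}. The second equality of~\eqref{equ:mult_pi1Of_prod} is the same computation applied after writing $[\gdif_t]\pmult[\dif_t] = [\dif_t]\pmult\bigl([\dif_t]^{-1}\pmult[\gdif_t]\pmult[\dif_t]\bigr)$; here one slides the conjugator $\dif_t^{-1}$ to $\dif_1^{-1}$, producing the right-hand side $[\dif_1^{-1}\circ\gdif_s\circ\dif_1]$.

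The step I expect to carry the real content is the homotopy for the first equality: it is the relative analogue of the classical Eckmann--Hilton argument identifying the pointwise and concatenation products of loops in a topological group, and everything else is formal once it is in hand. The one point that genuinely requires care — and the only place where the relative (as opposed to absolute) nature of $\pi_1(D,S,e)$ intervenes — is the boundary condition $F(1,u)\in S$ throughout each homotopy. A naive slide of the conjugator could drag the endpoint out of $S$, or force one to connect $\dif_1$ to $\dif_1^{-1}$ inside $S$, which is impossible in general since $S$ need not be connected. The homotopies above avoid this by keeping the conjugator at $t=1$ pinned to its constant value, so that the moving endpoint $G(1,u)$ is literally constant and stays in $S$. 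Verifying the corner values of $\phi$, $\psi$, and $\gamma$ is then routine, and I would relegate it to a single sentence with a reference to Figure~\ref{fig:paths_in_the_square}.
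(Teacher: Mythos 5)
Your proof is correct, and its engine is the same as the paper's: the bilinear square map $(s,t)\mapsto\gdif_s\circ\dif_t$ and the homotopy (rel the corners) between its diagonal and its edge-paths. Your explicit reparametrization $F(t,u)=\gdif_{\phi(t,u)}\circ\dif_{\psi(t,u)}$ is exactly the paper's homotopy from the diagonal $AC$ to the edge-path $AB\cdot BC$, written out in coordinates, and it correctly yields the first equality of~\eqref{equ:mult_pi1Of_prod}. Where you diverge is in deriving the remaining identities: the paper gets the second equality of~\eqref{equ:mult_pi1Of_prod} by running the same square along the other pair of edges $AD\cdot DC$ and invoking Lemma~\ref{lm:prod_in_rel_pi1}, and then obtains~\eqref{equ:mult_pi1Of_conj} purely algebraically from~\eqref{equ:mult_pi1Of_prod}; you instead never use Lemma~\ref{lm:prod_in_rel_pi1}, working entirely with pointwise products, pointwise inverses, and an explicit conjugator-sliding homotopy $G(t,u)=\dif_{t+u(1-t)}\circ\gdif_t\circ\dif_{t+u(1-t)}^{-1}$. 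Both routes are valid and of comparable length; yours isolates the one genuinely topological input (the pointwise-versus-concatenation principle) and makes everything else a formal consequence, while the paper's route reuses the single square for both equalities. You are also right to flag the $t=1$ boundary condition as the only delicate point, and your homotopies do keep that endpoint constant in $S$. One remark: your computation gives $[\dif_1\circ\gdif_s\circ\dif_1^{-1}]$ as the right-hand side of~\eqref{equ:mult_pi1Of_conj}; the formula as printed in the statement (and the last line of the paper's own proof) contains a typo, and your version is the one actually consistent with~\eqref{equ:mult_pi1Of_prod} and with the way the identity is applied later in the proof of Theorem~\ref{th:main:pi1Of}.
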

\begin{proof}
Let $\gamma:I\times I \to D$ be a continuous map defined by
\[
\gamma(s,t) = \gdif_s\circ\dif_t, \qquad (s,t)\in I\times I,
\]
see Figure~\ref{fig:paths_in_the_square}(b).
\begin{figure}[h]
\begin{tabular}{ccc}
\includegraphics[height=2.4cm]{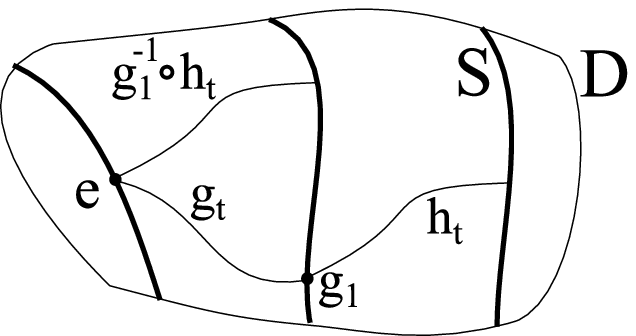} & \qquad\qquad &
\includegraphics[height=2.4cm]{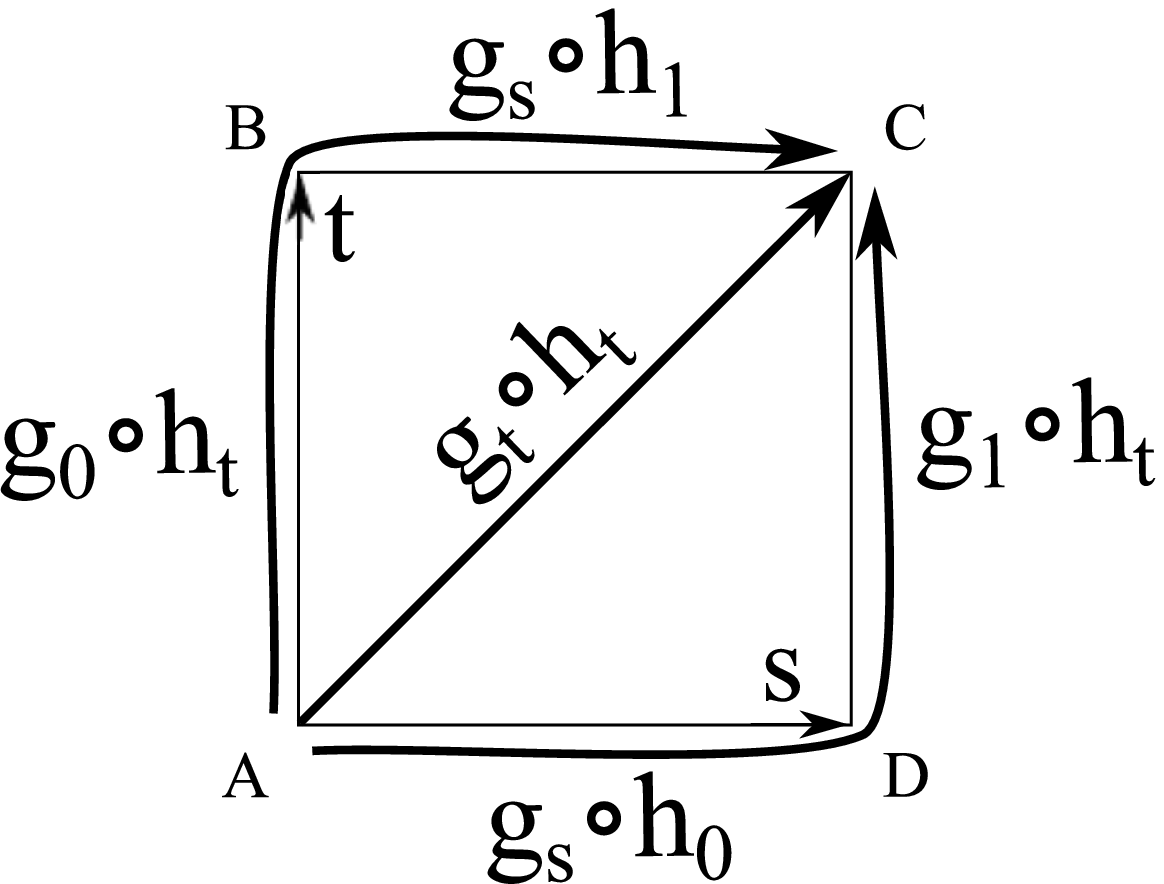} \\
(a) && (b)
\end{tabular}
\caption{}\label{fig:paths_in_the_square}
\end{figure}
Then the path $[\gdif_t\circ\dif_t]$ corresponds to the restriction of $\gamma$ to the diagonal $AC = \{s=t \mid (s,t)\in I\times I\}$.
Evidently, this path is homotopic relatively to its ends to the composition of paths along sides $AB$ and $BC$ as well as to the composition of paths along sides $AD$ and $DC$.
Hence by~\eqref{equ:path_pull_back} we get the following relations in $\pi_1(D,S,e)$:
\begin{align*}
[\gdif_t\circ\dif_t] &= 
[\gdif_s\circ\dif_0]\pmult[(\gdif_1\circ\dif_0)^{-1}\circ \gdif_1\circ\dif_t] =
[\gdif_s]\pmult[\dif_t], \\
[\gdif_t\circ\dif_t] &= 
[\gdif_0\circ\dif_t]\pmult[(\gdif_0\circ\dif_1)^{-1}\circ\gdif_s\circ\dif_1] =
[\dif_t]\pmult[\dif_1^{-1}\circ \gdif_s\circ\dif_1], \\
[\dif_t]\pmult[\gdif_s]\pmult[\dif_t^{-1}] 
&= [\dif_t]\pmult[\dif_t^{-1}]\pmult[\dif_1\circ\gdif_s\circ\dif_1^{-1}] = [\dif_t \circ\dif_t^{-1}]\pmult[\gdif_s\circ\dif_1^{-1}] = [\gdif_s\circ\dif_1^{-1}],
\end{align*}
where we take to account that $\gdif_0=\dif_0=e$.
\end{proof}

\section{Parallel curves on $\Torus$}\label{sect:parallel_curves_on_t2}

\subsection{Twists and slides along curves}\label{sect:twists_and_slides}
Let $\alpha,\beta:[-1,1]\to[0,1]$ be two $C^{\infty}$-functions such that $\alpha=0$ on $[-1,-\tfrac{1}{2}]$ and $\alpha=1$ on $[\frac{1}{2},1]$, while $\beta=0$ on $[-1,-\tfrac{2}{3}] \cup [\tfrac{2}{3},1]$ and $\beta = 1$ on $[-\tfrac{1}{3}, \tfrac{1}{3}]$, see Figure~\ref{fig:twist_and_slide}.

Let also $\Cyl = S^1\times [-1,1]$ be a cylinder and $\curveMeridian = S^1\times 0$.
Define the following two diffeomorphisms $\dtw, \slide:\Cyl\to\Cyl$ by 
\begin{align*}
\dtw(z,t) &= ( z e^{\alpha(t)}, t),
&
\slide(z,t) &= ( z e^{\beta(t)}, t),
\end{align*}
for $(z,t)\in \Cyl$, see Figure~\ref{fig:twist_and_slide}.
Then $\dtw$ is called a \myemph{Dehn twist} and $\slide$ is called a \myemph{slide} along the curve $\curveMeridian$.
Notice that $\dtw$ is fixed on some neighbourhood of $\partial\Cyl$, while $\slide$ is fixed on some neighbourhood of $\curveMeridian \cup \partial\Cyl$.

\begin{figure}[h]
\begin{tabular}{ccc}
\includegraphics[height=1.4cm]{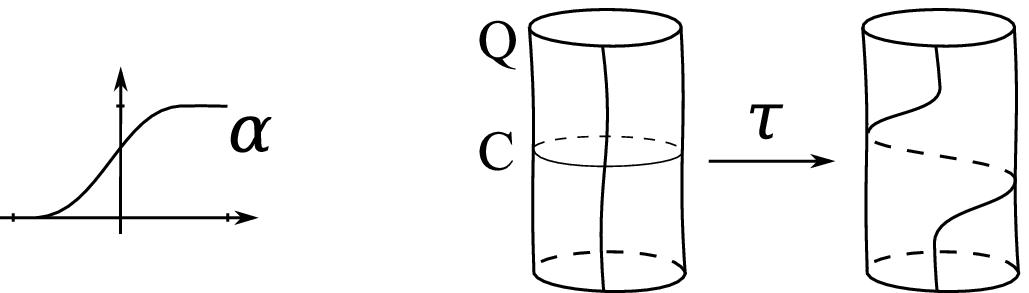}  & \qquad\qquad\qquad & \includegraphics[height=1.4cm]{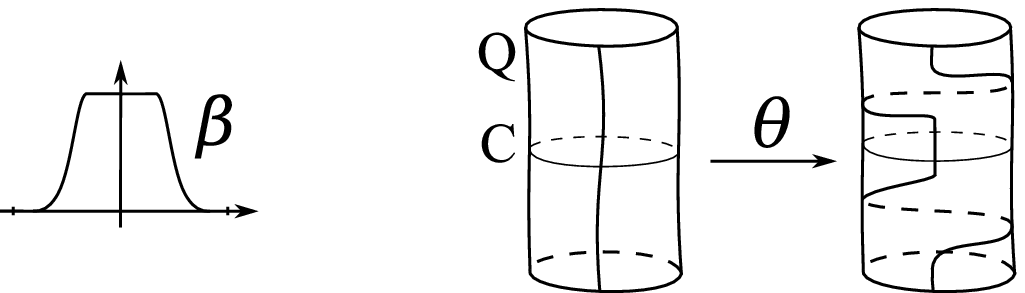} \\
(a) Dehn twist & & (b) Slide
\end{tabular}
\caption{}\label{fig:twist_and_slide}
\end{figure}

\begin{lemma}\label{lm:DehnTwist_gen_DQdQ}
Let $\Diff(\Cyl,\partial\Cyl)$ be the group of diffeomorphisms fixed on some neighbourhood of $\partial\Cyl = S^1\times\{0,1\}$, and $\dtw \in \Diff(\Cyl,\partial\Cyl)$ be a Dehn twist along the curve $\curveMeridian$.
Then \[ \pi_0\Diff(\Cyl,\partial\Cyl)= \langle[\dtw]\rangle \cong \ZZZ,\] i.e. it is an infinite cyclic group generated by the isotopy class of the Dehn twist $\tau$.
\end{lemma}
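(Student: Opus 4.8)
The plan is to build an explicit \emph{twisting number} homomorphism $\cis\colon\pi_0\Diff(\Cyl,\partial\Cyl)\to\ZZZ$, to verify that it is surjective with $\cis([\dtw])=1$, and then to prove that it is injective; these three facts together say that $\cis$ is an isomorphism carrying $[\dtw]$ to a generator, which is precisely the assertion. To construct $\cis$ I would pass to the universal covering $\rho\colon\RRR\times[-1,1]\to\Cyl=S^1\times[-1,1]$, $\rho(x,t)=(e^{2\pi i x},t)$, whose deck group is generated by the unit shift $(x,t)\mapsto(x+1,t)$. Any $\dif\in\Diff(\Cyl,\partial\Cyl)$ lifts to a diffeomorphism of the strip commuting with this shift, and since $\dif$ is the identity near $S^1\times\{-1\}$ there is a unique such lift $\widetilde{\dif}$ equal to the identity near the lower boundary line $\RRR\times\{-1\}$. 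As $\dif$ also fixes $S^1\times\{1\}$ pointwise, this normalized lift satisfies $\widetilde{\dif}(x,1)=(x+m,1)$ near the upper boundary line for a unique $m\in\ZZZ$, and I set $\cis(\dif):=m$.

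Well-definedness on $\pi_0$ is immediate: an isotopy of $\dif$ rel $\partial\Cyl$ lifts to an isotopy of normalized lifts, along which $m\in\ZZZ$ varies continuously and is therefore constant. Because normalized lifts compose, $\cis$ is a homomorphism. A direct computation shows that the normalized lift of the Dehn twist is $\widetilde{\dtw}(x,t)=(x+\alpha(t),t)$, whence $\cis([\dtw])=\alpha(1)-\alpha(-1)=1$. Consequently $\cis$ is surjective, $[\dtw]$ has infinite order, $\langle[\dtw]\rangle\cong\ZZZ$, and $\cis$ restricts to an isomorphism on this subgroup; it remains only to prove that $\cis$ is injective on all of $\pi_0\Diff(\Cyl,\partial\Cyl)$.

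The hard part is this injectivity, namely that $\cis(\dif)=0$ forces $\dif$ to be isotopic to $\id_{\Cyl}$ rel $\partial\Cyl$. Here I would fix the spanning arc $\gamma=\{1\}\times[-1,1]$ joining the two boundary circles. Since $\dif$ fixes $\partial\Cyl$, the image $\dif(\gamma)$ is again a properly embedded arc with the same endpoints, and lifting shows that $\cis(\dif)$ is exactly the winding obstruction to deforming $\dif(\gamma)$ back onto $\gamma$ rel endpoints (arcs joining the two boundary components form a torsor over $\pi_1(\Cyl)\cong\ZZZ$ under homotopy rel endpoints). Thus $\cis(\dif)=0$ means $\dif(\gamma)$ is homotopic, hence isotopic, to $\gamma$ rel endpoints; by the isotopy extension theorem I may compose $\dif$ with an ambient isotopy supported away from $\partial\Cyl$ so as to arrange that $\dif$ fixes $\gamma$ pointwise. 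Cutting $\Cyl$ along $\gamma$ produces a $2$-disk on which $\dif$ descends to a diffeomorphism fixing the entire boundary, and since $\pi_0\Diff(D^2,\partial D^2)=1$ (Smale's theorem, $\Diff(D^2,\partial D^2)$ being even contractible), that diffeomorphism is isotopic to the identity rel boundary; regluing gives an isotopy of $\dif$ to $\id_{\Cyl}$ rel $\partial\Cyl$.

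Combining the three steps, $\cis\colon\pi_0\Diff(\Cyl,\partial\Cyl)\to\ZZZ$ is an isomorphism sending $[\dtw]$ to a generator, which is the statement $\pi_0\Diff(\Cyl,\partial\Cyl)=\langle[\dtw]\rangle\cong\ZZZ$. I expect the genuine technical obstacle to lie entirely in the arc-straightening step: verifying carefully that the cover-theoretic twisting number coincides with the winding obstruction of $\dif(\gamma)$, and that the straightening ambient isotopy can be kept fixed near $\partial\Cyl$ so that the reduction to $\Diff(D^2,\partial D^2)$ is legitimate; the homomorphism and surjectivity parts are routine.
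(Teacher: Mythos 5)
Your proof is correct. Note that the paper gives no proof of this lemma at all --- it is stated as the classical computation of the mapping class group of the annulus rel boundary --- so there is no argument to compare against; what you have written is the standard one: a twisting-number homomorphism $\cis\colon\pi_0\Diff(\Cyl,\partial\Cyl)\to\ZZZ$ defined via the normalized lift to the universal cover of the cylinder, surjectivity via $\cis([\dtw])=1$, and injectivity via the Alexander method (straighten the image of a spanning arc, cut to a disk, invoke $\pi_0\Diff(D^2,\partial D^2)=1$). The two points you flag as the technical core are indeed the only places needing care: (i) the identification of $\cis(\dif)$ with the difference class of $\dif(\gamma)$ and $\gamma$ in the $\pi_1(\Cyl)$-torsor of rel-endpoint homotopy classes of spanning arcs --- this follows by lifting $\dif\circ\gamma$ through the normalized lift, whose endpoint is displaced by exactly $\cis(\dif)$ --- combined with the fact that homotopic embedded arcs rel endpoints are ambient isotopic (Epstein); and (ii) after straightening, one should arrange $\dif$ to fix an entire neighbourhood of $\gamma\cup\partial\Cyl$ before cutting, so that the cut diffeomorphism genuinely lies in $\Diff(D^2,\partial D^2)$ in the sense used here (fixed near the boundary) and Smale's theorem applies; both refinements are routine. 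One trivial discrepancy worth flagging: the lemma's statement writes $\partial\Cyl=S^1\times\{0,1\}$ although the cylinder was introduced as $S^1\times[-1,1]$; your normalization $\cis([\dtw])=\alpha(1)-\alpha(-1)=1$ uses the latter convention, which is the one consistent with the paper's definition of $\dtw$.
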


Now let $\curveMeridian \subset \Mman$ be a simple closed curve.
Suppose $\curveMeridian$ \myemph{preserves orientation}, that is it has a closed neighbourhood $\CNbh$ diffeomorphic to a cylinder $\Cyl$.
Fix any $\phi:\Cyl\to\CNbh$ such that $\phi(S^1\times0) = \curveMeridian$.

Since $\dtw$ is fixed on some neighbourhood of $\partial\Cyl$, we see that $\phi\circ\dtw\circ\phi^{-1}:\CNbh\to\CNbh$ extends by the identity to some diffeomorphism $\bar{\dtw}$ and $\bar{\slide}$ of $\Mman$ respectively.
Any diffeomorphism $\dif:\Mman\to\Mman$ isotopic to $\bar{\dtw}$ or $\bar{\dtw}^{-1}$ will be called a \myemph{Dehn twist} along $\curveMeridian$.

Also notice that $\slide$ is fixed on some neighbourhood of $(S^1\times0) \cup \partial\Cyl$, whence the diffeomorphism  $\phi\circ\slide\circ\phi^{-1}:\CNbh\to\CNbh$ extends by the identity to some diffeomorphisms $\bar{\slide}$ of $\Mman$.
Any diffeomorphism $\dif:\Mman\to\Mman$ \emph{fixed on some neighbourhood of $\curveMeridian$}, supported in some cylindrical neighbourhood $\CNbh$ of $\curveMeridian$, and isotopic to $\bar{\slide}$ or $\bar{\slide}^{-1}$ relatively to some neighbourhood of $\curveMeridian \cup \overline{\Mman\setminus\Cyl}$ will be called a \myemph{slide} along $\curveMeridian$.

\subsection{Diffeomorphisms of $\Torus$ fixed on parallel family of curves}
Let $\curveMeridian_0,\ldots,\curveMeridian_{n-1} \subset \Torus$ be a parallel family of curves cyclically ordered along $\Torus$, see~\S\ref{sect:paral_curves} and Figure~\ref{fig:paral_curves}.
For each $i=0,\ldots,n-1$ let $\dtw_i\in\Diff(\Torus)$ be a Dehn twist such that $\supp(\dtw_i) \subset \Int{\Cyl_i}$ and its restriction $\dtw_i|_{\Cyl_i}$ generates $\pi_0\Diff(\Cyl_i,\partial\Cyl_i) \cong \ZZZ$, see Figure~\ref{fig:twist_and_slide_on_t2}(a).
Replacing, if necessary, $\dtw_i$ with $\dtw_i^{-1}$ we can assume that \myemph{all $\dtw_i$ are isotopic each other as diffeomorphisms of $\Torus$}.

\begin{figure}[h]
\begin{tabular}{ccc}
\includegraphics[height=2cm]{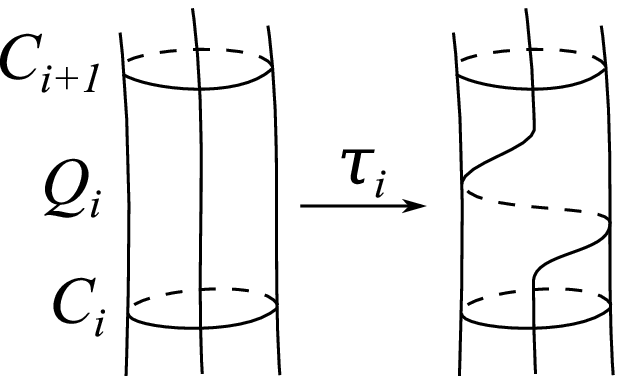}  & \qquad\qquad\qquad & \includegraphics[height=2cm]{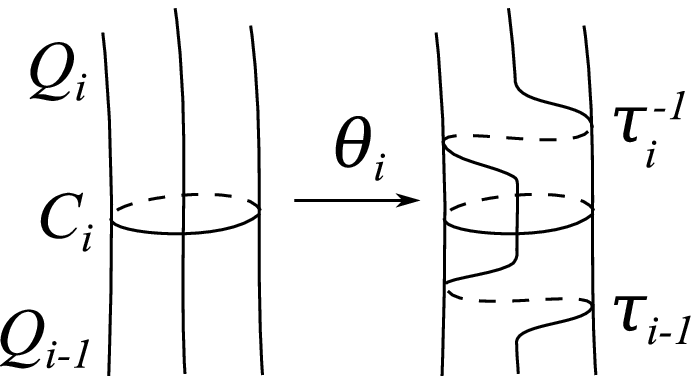} \\
(a) Dehn twist & & (b) Slide
\end{tabular}
\caption{}\label{fig:twist_and_slide_on_t2}
\end{figure}

Let 
\begin{equation}\label{equ:group_of_slides}
\grp = \DiffId(\Torus) \cap \Diff(\Torus,\CrvList)
\end{equation}
be the group of diffeomorphisms fixed on some neighbourhood of each $\curveMeridian_i$ and isotopic to the identity via an isotopy that is not necessarily fixed near $\CrvList$.
Evidently, $\DiffId(\Torus,\CrvList)$ is the path component of $\grp$ containing $\idT$, whence
\[
\pi_0 \grp \ \cong \ \grp / \DiffId(\Torus,\CrvList).
\]

\begin{theorem}\label{th:slides_gen_pi0G}
Let $\slide_i\in\grp$, $i=0,\ldots,n-1$, be a slide along $\curveMeridian_i$ such that
\begin{enumerate}
\item[\rm(i)]
$\supp(\slide_i) \subset \Int{\Cyl_{i-1}} \cup \Int{\Cyl_{i}}$, and, in particular, $\slide_i$ is fixed near $\Cyl_i$;
\item[\rm(ii)]
$\supp(\slide_i)\cap\supp(\slide_j)=\varnothing$ for $i\not=j$;
\item[\rm(iii)]
$\slide|_{\Cyl_i}$ is isotopic to $\dtw_{i-1}\circ\dtw_{i}^{-1}$ relatively to some neighbourhood of $\curveMeridian_i \cup \Mman\setminus(\Cyl_{i-1}\cup\Cyl_i)$, see Figure~\ref{fig:twist_and_slide_on_t2}(b).
\end{enumerate} 
Denote $\slide = \slide_0\circ\slide_1\circ\cdots\circ\slide_{n-1}$.
Then $\slide \in \DiffId(\Torus,\CrvList)$, i.e. it is isotopic to $\idT$ relatively to some neighbourhood of $\CrvList$.
Moreover,
\begin{equation}\label{equ:pi0G_gen_by_slides}
\pi_0 \grp \ \cong \ \langle[\slide_1],\ldots,[\slide_{n-1}] \rangle \ \cong \ \ZZZ^{n-1},
\end{equation}
i.e. this group is freely generated by isotopy classes of slides $\slide_1,\ldots,\slide_{n-1}$ in $\grp$.

In particular, if $n=1$, $\pi_0 \grp=\{1\}$, and so $\grp = \DiffId(\Torus) \cap \Diff(\Torus,\CrvList) = \DiffId(\Torus,\CrvList)$.
\end{theorem}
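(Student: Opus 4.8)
The plan is to study an element $\dif\in\grp$ through its restrictions to the cylinders $\Cyl_0,\ldots,\Cyl_{n-1}$. Since $\dif$ is orientation preserving and fixed near $\CrvList$, it preserves each $\Cyl_i$, so one obtains a group homomorphism
\[
\rho:\grp\longrightarrow\prod_{i=0}^{n-1}\Diff(\Cyl_i,\partial\Cyl_i),\qquad \rho(\dif)=(\dif|_{\Cyl_0},\ldots,\dif|_{\Cyl_{n-1}}).
\]
By Lemma~\ref{lm:DehnTwist_gen_DQdQ} together with the normalisation of the $\dtw_i$, each factor $\pi_0\Diff(\Cyl_i,\partial\Cyl_i)$ is infinite cyclic and generated by $[\dtw_i|_{\Cyl_i}]$, so $\rho$ induces
\[
\rho_*:\pi_0\grp\longrightarrow\ZZZ^{n},\qquad \rho_*[\dif]=(k_0,\ldots,k_{n-1}),
\]
where $\dif|_{\Cyl_i}$ is isotopic to $\dtw_i^{\,k_i}$ rel $\partial\Cyl_i$. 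Everything reduces to identifying $\ker\rho_*$, $\IM\rho_*$, and the values $\rho_*[\slide_i]$.

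The first step I would carry out is the gluing principle that $\rho_*$ is injective. If $\rho_*[\dif]=0$, then for every $i$ there is an isotopy of $\dif|_{\Cyl_i}$ to $\id_{\Cyl_i}$ fixed near $\partial\Cyl_i$; since these isotopies all fix neighbourhoods of the shared curves $\curveMeridian_i$, they patch into a single isotopy of $\dif$ to $\idT$ fixed near $\CrvList$, whence $\dif\in\DiffIdTC$ and $[\dif]=1$. The second step locates $\IM\rho_*$. Given $\dif$ with $\rho_*[\dif]=(k_0,\ldots,k_{n-1})$, the difference $\dif\circ(\dtw_0^{\,k_0}\circ\cdots\circ\dtw_{n-1}^{\,k_{n-1}})^{-1}$ has trivial $\rho_*$-value, hence lies in $\DiffIdTC\subset\DiffIdT$ by the gluing principle; thus in $\DiffT$ the diffeomorphism $\dif$ is isotopic to $\dtw_0^{\,k_0}\circ\cdots\circ\dtw_{n-1}^{\,k_{n-1}}$. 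Because all $\dtw_i$ are isotopic in $\DiffT$ to one fixed Dehn twist $\dtw$ along a non-separating curve, this product is isotopic to $\dtw^{\,k_0+\cdots+k_{n-1}}$; and since $\dif\in\DiffIdT$ while $[\dtw]$ has infinite order in $\pi_0\DiffT\cong SL(2,\ZZZ)$, we conclude $k_0+\cdots+k_{n-1}=0$. Hence $\IM\rho_*\subseteq L:=\{(k_0,\ldots,k_{n-1})\in\ZZZ^n:\sum_i k_i=0\}\cong\ZZZ^{n-1}$.

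It remains to bring in the slides. Hypothesis (iii) says $\slide_i$ is isotopic to $\dtw_{i-1}\circ\dtw_i^{-1}$ rel a neighbourhood of $\curveMeridian_i\cup(\Torus\setminus(\Cyl_{i-1}\cup\Cyl_i))$, so $\slide_i$ restricts, up to boundary-relative isotopy, to $\dtw_{i-1}$ on $\Cyl_{i-1}$, to $\dtw_i^{-1}$ on $\Cyl_i$, and to the identity on the remaining cylinders. Writing $e_0,\ldots,e_{n-1}$ for the standard basis of $\ZZZ^n$ (indices mod $n$), this gives $\rho_*[\slide_i]=e_{i-1}-e_i$. Summing over $i$ yields $\rho_*[\slide]=\sum_{i=0}^{n-1}(e_{i-1}-e_i)=0$, so injectivity of $\rho_*$ forces $\slide\in\DiffIdTC$, which is the first assertion of the theorem. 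Moreover the $n-1$ vectors $e_{i-1}-e_i$ for $i=1,\ldots,n-1$ form a $\ZZZ$-basis of $L$, so $\IM\rho_*=L$ and $\rho_*$ is an isomorphism $\pi_0\grp\xrightarrow{\cong}L\cong\ZZZ^{n-1}$ sending $[\slide_1],\ldots,[\slide_{n-1}]$ to a basis; thus these classes freely generate $\pi_0\grp$, which is~\eqref{equ:pi0G_gen_by_slides}. For $n=1$ one has $L=0$, giving $\pi_0\grp=\{1\}$ and $\grp=\DiffIdTC$.

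The main obstacle I anticipate is the honest verification of the gluing principle and of the restriction homomorphism — ensuring that boundary-relative isotopies on adjacent cylinders really assemble into a global isotopy fixed on a full neighbourhood of $\CrvList$, and that $\rho$ is well defined and a homomorphism. The conceptual core, by contrast, is the single arithmetic constraint $\sum_i k_i=0$, which is exactly the image of the infinite-order class of a non-separating Dehn twist under $\pi_0\DiffT\cong SL(2,\ZZZ)$; once this is in hand, the rank count and the identification of the slides with the basis $e_{i-1}-e_i$ of $L$ are linear algebra.
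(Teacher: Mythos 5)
Your proposal is correct and follows essentially the same route as the paper: the paper also passes to the restriction homomorphism $\qiso:\grp\to\ZZZ^{n}$ recording the Dehn-twist exponents on each $\Cyl_i$, identifies its kernel with $\DiffId(\Torus,\CrvList)$ via Lemma~\ref{lm:DehnTwist_gen_DQdQ}, derives the constraint $\sum_i a_i=0$ from the infinite order of a non-separating Dehn twist in $\pi_0\DiffT$, and checks that $\qiso(\slide_i)=e_{i-1}-e_i$ gives a basis of that sublattice. The only cosmetic difference is that the paper verifies $\slide\in\DiffId(\Torus,\CrvList)$ directly from the telescoping product $\dtw_0\dtw_1^{-1}\dtw_1\dtw_2^{-1}\cdots\dtw_{n-1}\dtw_0^{-1}=\idT$ rather than deducing it from injectivity.
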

\begin{proof}
For $n=1$ this statement is established in~\cite{MaksymenkoFeshchenko:MS:2014}, therefore we will assume that $n\geq2$.

It follows from (iii) that $\slide$ is isotopic relatively to some neighbourhood of $\CrvList$ to 
\[
\dtw_{0}\circ\dtw_{1}^{-1} \circ \dtw_{1}\circ\dtw_{2}^{-1} \circ \cdots \circ \dtw_{n-1} \circ \dtw_{0}^{-1} = \idT,
\]
that is $\slide\in \DiffId(\Torus,\CrvList)$.

It remains to prove~\eqref{equ:pi0G_gen_by_slides}.
Evidently, if $\dif\in\grp$, then $\dif(\Cyl_i)=\Cyl_i$ and $\dif$ is fixed on some neighbourhood of $\partial\Cyl_i = \curveMeridian_i \cup \curveMeridian_{i+1}$.
In other words, the restriction $\dif|_{\Cyl_i} \in \Diff(\Cyl_i,\partial\Cyl_i)$.
Hence, by Lemma~\ref{lm:DehnTwist_gen_DQdQ}, $\dif|_{\Cyl_i}$ is isotopic relatively to some neighbourhood $\partial\Cyl_i$ to $\dtw_i^{a_i}|_{\Cyl_i}$ for a unique $a_i\in\ZZZ$.
Therefore $\dif$ itself is isotopic relatively to some neighbourhood of $\CrvList$ to the product 
\begin{equation}\label{equ:prod_dtw}
\dtw_0^{a_0} \circ \dtw_1^{a_1} \circ \cdots \circ \dtw_{n-1}^{a_{n-1}}
\end{equation}
for unique integers $a_0,\ldots,a_{n-1}\in\ZZZ^n$.

It easily follows that the correspondence $\dif\longmapsto (a_0,\ldots,a_{n-1})$ is a well-defined homomorphism
\[
\qiso:\grp \longrightarrow \ZZZ^{n}.
\]
Consider the following subgroup of $\ZZZ^n$:
\[\Delta = \{(a_0,\ldots,a_{n-1}) \in \ZZZ^n \mid a_0+\cdots+a_{n-1}=0 \}.\]
\begin{lemma}
$\ker(\qiso) = \DiffId(\Torus,\CrvList)$ \ and \ $\qiso(\grp) = \Delta$, so we have the following exact sequence
\[
1 \longrightarrow \DiffId(\Torus,\CrvList) \xrightarrow{~~\subset~~}  \grp \xrightarrow{~~\qiso~~} \Delta \longrightarrow 1.
\]
Hence $\pi_0\grp \cong \grp/\DiffId(\Torus,\CrvList)  \cong \Delta \cong \ZZZ^{n-1}$.
\end{lemma}
\begin{proof}
The identity $\ker(\qiso) = \DiffId(\Torus,\CrvList)$ easily follows from Lemma~\ref{lm:DehnTwist_gen_DQdQ}.

Let us prove that $\qiso(\grp) = \Delta$.
Suppose $\qiso(\dif) = (a_0,\ldots,a_{n-1})$, so $\dif$ is isotopic relatively to some neighbourhood of $\CrvList$ to the product $\dtw_0^{a_0} \circ \dtw_1^{a_1} \circ \cdots \circ \dtw_{n-1}^{a_{n-1}}$.
But by construction all $\dtw_i$ are mutually isotopic as diffeomorphisms of $\Torus$.
Hence $\dif$ is isotopic to $\dtw_0^{a_0+\cdots+a_{n-1}}$.
On the other hand, by assumption $\dif$ is isotopic to $\idT$, while $\dtw_0$ is not isotopic to the identity and its isotopy class in $\pi_0\Diff(\Torus)$ has infinite order.
Therefore $a_0+\cdots+a_{n-1} = 0$, i.e. $\qiso(\dif)\in\Delta$.
\end{proof}

Now we can complete the proof of Theorem~\ref{th:slides_gen_pi0G}.
By (ii) $\slide_i$ is isotopic relatively $\CrvList$ to the product $\dtw_{i-1} \circ \dtw_{i}^{-1}$, see Figure~\ref{fig:twist_and_slide_on_t2}(b).
This means that
\[
\qiso(\slide_i) = (\underbrace{0,\ldots,0, 1}_{i}, -1, 0,\ldots,0),
\qquad 
i=1,\ldots,n-1.
\]
It remains to note that the elements $\qiso(\slide_i)$, $i=1,\ldots,n-1$, constitute a basis for $\Delta$, whence their isotopy classes in $\grp$ constitute a basis for $\pi_0\grp$.
\end{proof}

\begin{corollary}\label{cor:slides_gen_pi0G}
For each $\dif\in\grp$ there exist unique $b_1,\ldots,b_{n-1}\in\ZZZ$ and $\gdif\in\DiffIdTC$ such that
$\dif = \slide_1^{b_1} \circ \cdots \circ \slide_{n-1}^{b_{n-1}} \circ \gdif$.
\qed
\end{corollary}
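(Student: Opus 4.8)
The plan is to read this statement directly off Theorem~\ref{th:slides_gen_pi0G}, which already identifies $\pi_0\grp$ with the free abelian group $\ZZZ^{n-1}$ generated by the isotopy classes $[\slide_1],\ldots,[\slide_{n-1}]$. The only work remaining is to lift that uniqueness statement from the level of path components back to the level of individual diffeomorphisms in $\grp$, so the corollary amounts to restating the freeness of $\pi_0\grp$ as a unique normal-form decomposition of each element of $\grp$.

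First I would recall that $\DiffIdTC$ is the identity path component of $\grp$, so the natural projection $q\colon\grp\to\pi_0\grp$ has kernel precisely $\DiffIdTC$. Since by (ii) the slides $\slide_1,\ldots,\slide_{n-1}$ have mutually disjoint supports, they commute in $\grp$; hence any product $\slide_1^{b_1}\circ\cdots\circ\slide_{n-1}^{b_{n-1}}$ is unambiguously defined and projects under $q$ to $[\slide_1]^{b_1}\cdots[\slide_{n-1}]^{b_{n-1}}$ in $\pi_0\grp$.

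For existence, given $\dif\in\grp$, I would use that $\{[\slide_1],\ldots,[\slide_{n-1}]\}$ freely generates $\pi_0\grp\cong\ZZZ^{n-1}$ to write $q(\dif)=[\slide_1]^{b_1}\cdots[\slide_{n-1}]^{b_{n-1}}$ for a unique tuple $(b_1,\ldots,b_{n-1})\in\ZZZ^{n-1}$. Setting $\gdif:=(\slide_1^{b_1}\circ\cdots\circ\slide_{n-1}^{b_{n-1}})^{-1}\circ\dif$ then yields $q(\gdif)=1$, i.e. $\gdif\in\DiffIdTC$, and by construction $\dif=\slide_1^{b_1}\circ\cdots\circ\slide_{n-1}^{b_{n-1}}\circ\gdif$. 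For uniqueness, if also $\dif=\slide_1^{b_1'}\circ\cdots\circ\slide_{n-1}^{b_{n-1}'}\circ\gdif'$ with $\gdif'\in\DiffIdTC$, then applying $q$ and using $q(\gdif')=1$ gives $q(\dif)=[\slide_1]^{b_1'}\cdots[\slide_{n-1}]^{b_{n-1}'}$; freeness of the generators forces $b_i'=b_i$ for all $i$, and then $\gdif'=\gdif$ follows at once.

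I do not expect a genuine obstacle here: the entire content was already established in Theorem~\ref{th:slides_gen_pi0G}, and the only point that needs a word of care is that the product of the slides is well defined independently of the order in which one composes them, which is guaranteed by the disjointness of supports in (ii). Everything else is the standard translation between a free generating set for a quotient group and unique coset representatives.
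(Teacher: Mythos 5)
Your proof is correct and follows exactly the route the paper intends: the corollary is stated with no separate proof precisely because it is the standard translation of the freeness of $\pi_0\grp\cong\grp/\DiffIdTC$ on the classes $[\slide_1],\ldots,[\slide_{n-1}]$ (Theorem~\ref{th:slides_gen_pi0G}) into a unique normal form for elements of $\grp$. Your added remark that the disjointness of supports makes the product of slides order-independent is a sensible point of care and does not diverge from the paper's argument.
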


\subsection{Smooth shifts along trajectories of a flow}
Let $\flow:M \times \RRR\to M$ be a smooth flow on a manifold $M$.
Then for every smooth function $\alpha:M \to \RRR$ one can define the following map $\flow_{\alpha}:\Torus\to\RRR$ by the formula:
\begin{equation}\label{equ:shooth_shift}
\flow_{\alpha}(z) = \flow(z, \alpha(z)), \qquad z\in M. 
\end{equation}
\begin{lemma}\label{lm:isotopy_between_shifts}
{\rm\cite[Claim~4.14.1]{Maksymenko:AGAG:2006}.}
Suppose $\flow_{\alpha}$ is a {\bfseries diffeomorphism}.
Then for each $t\in[0,1]$ the map
\begin{align*}
&\flow_{t\alpha}:M \to M,
&
\flow_{t\alpha}(z) &= \flow(z, t\alpha(z))
\end{align*}
is a diffeomorphism as well.

In particular, $\{\flow_{t\alpha}\}_{t\in I}$ is an isotopy between $\id_{M} = \flow_0$ and $\flow_{\alpha}$.
\qed
\end{lemma}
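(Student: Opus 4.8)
The isotopy assertion is a formal consequence of the first one: the map $(t,z)\mapsto\flow_{t\alpha}(z)$ is smooth, with $\flow_{0\cdot\alpha}=\id_M$ and $\flow_{1\cdot\alpha}=\flow_\alpha$, so once every $\flow_{t\alpha}$ is known to be a diffeomorphism the family $\{\flow_{t\alpha}\}_{t\in I}$ is automatically an isotopy from $\id_M$ to $\flow_\alpha$. Thus the whole content is to show that $\flow_{t\alpha}$ is a diffeomorphism for each $t\in[0,1]$, and the plan is to control its Jacobian and then pass from a local to a global diffeomorphism.

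First I would differentiate $\flow_{t\alpha}$. Write $\Phi^{s}=\flow(\cdot,s)$ for the time-$s$ map and $V$ for the generating vector field, so that $\partial_s\flow(z,s)=V(\flow(z,s))$ and, by invariance of $V$ under its own flow, $d_z\Phi^{s}\bigl(V(z)\bigr)=V(\Phi^{s}(z))$. The chain rule then gives
\[
d\flow_{t\alpha}(z)(v)=d_z\Phi^{t\alpha(z)}(v)+t\,d\alpha(z)(v)\,V\bigl(\flow(z,t\alpha(z))\bigr).
\]
Factoring out the isomorphism $d_z\Phi^{t\alpha(z)}$ and using $(d_z\Phi^{s})^{-1}V(\Phi^{s}(z))=V(z)$ with $s=t\alpha(z)$, the bracketed operator becomes $v\mapsto v+t\,d\alpha(z)(v)\,V(z)$, a rank-one perturbation of the identity whose determinant, by the matrix-determinant lemma, equals $1+t\,(V\alpha)(z)$ where $(V\alpha)(z):=d\alpha(z)(V(z))$. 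Hence
\[
\det d\flow_{t\alpha}(z)=\det\bigl(d_z\Phi^{t\alpha(z)}\bigr)\cdot\bigl(1+t\,(V\alpha)(z)\bigr).
\]

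Next I would fix the signs of the two factors. Each time-$s$ map $\Phi^{s}$ is isotopic to $\id_M$ through $\Phi^{rs}$, $r\in[0,1]$, hence orientation-preserving, so $\det d_z\Phi^{t\alpha(z)}>0$. For the second factor I use the hypothesis at $t=1$: the diffeomorphism $\flow_\alpha$ is homotopic to $\id_M$ through the smooth family $\{\flow_{t\alpha}\}$, hence orientation-preserving (on the compact connected $M$ of the applications it has degree $1$; alternatively, if the flow has a zero $z_0$, then $1+(V\alpha)(z_0)=1>0$ and $1+(V\alpha)$, being continuous and nowhere zero, is positive by connectedness). Therefore $1+(V\alpha)(z)>0$ for all $z$, and since
\[
1+t\,(V\alpha)(z)=(1-t)\cdot 1+t\bigl(1+(V\alpha)(z)\bigr)
\]
is a positive combination for every $t\in[0,1]$, we conclude $\det d\flow_{t\alpha}>0$ everywhere, i.e.\ each $\flow_{t\alpha}$ is a local diffeomorphism. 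Finally, a local diffeomorphism of a compact connected manifold is a covering map, and being homotopic to $\id_M$ (so of degree one) it is one-sheeted, hence a diffeomorphism; the bordered and disconnected cases used in the paper follow by the same count, the flows in question fixing a neighbourhood of the boundary.

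The genuine obstacle is the third step, namely securing the \emph{sign} $1+(V\alpha)>0$ rather than merely $1+(V\alpha)\neq0$. Purely local (nonvanishing-Jacobian) information is insufficient: an orientation-reversing shift-diffeomorphism would violate the conclusion, as for the translation flow on $\RRR$ with $\alpha(z)=-2z$, where $\flow_\alpha(z)=-z$ is a diffeomorphism while $\flow_{\alpha/2}$ is constant. It is precisely the global diffeomorphism hypothesis together with compactness and connectedness of $M$ that rules this out and makes the argument go through.
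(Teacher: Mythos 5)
The paper itself offers no proof of this lemma: it is quoted from \cite[Claim~4.14.1]{Maksymenko:AGAG:2006} and closed with a \qed\ symbol, so there is no internal argument to measure yours against. On its own terms your argument is correct in the setting where the lemma is actually invoked here ($\Mman=\Torus$: compact, connected, orientable, and the flow $\flow$ has fixed points at the critical points of $\func$): the factorization $d\flow_{t\alpha}(z)=d_z\Phi^{t\alpha(z)}\circ\bigl(v\mapsto v+t\,d\alpha(z)(v)\,V(z)\bigr)$ and the determinant $1+t\,(V\alpha)(z)$ are right; the sign argument via the degree of $\flow_{\alpha}$ is not circular, since it only uses that $\{\flow_{t\alpha}\}$ is a homotopy of \emph{continuous} maps, which needs no injectivity; the convex-combination step $1+t(V\alpha)=(1-t)+t(1+V\alpha)>0$ is clean; and the passage from local diffeomorphism to diffeomorphism via the covering-space sheet count is standard for a compact connected target. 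Your closing observation is the most valuable part and deserves emphasis: as literally printed (arbitrary manifold, arbitrary flow) the statement is false --- your example $\flow(z,s)=z+s$ on $\RRR$ with $\alpha(z)=-2z$ shows it --- so some global input (compactness and connectedness of $\Mman$, plus either orientability or a fixed point of the flow) is indispensable to upgrade the pointwise nonvanishing $1+V\alpha\neq0$ to the positivity $1+V\alpha>0$. In \cite{Maksymenko:AGAG:2006} the claim is stated for compact surfaces and for flows adapted to $\func$, so these hypotheses are available there, and they hold for every application of the lemma in the present paper (including its use for the fixed-point-free flow $\flowMeridian$, where the degree argument rather than the fixed-point argument applies); but a reader relying on the lemma verbatim, outside that context, would be misled, and your proof makes the hidden hypotheses explicit.
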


\section{Some constructions associated with $\func$}\label{sect:some_constructions}
In the sequel we will regard the circle $S^1$ and the torus $\Torus$ as the corresponding factor-groups $\RRR/\ZZZ$ and $\RRR^2/\ZZZ^2$.
For $s\in S^1$ and $\eps\in(0,0.5)$ let \[ J_{\eps}(s) = (s-\eps,s+\eps) \subset S^1\]
be an open $\eps$-neighbourhood of $s\in S^1$.

\medskip 

Let $f\in\FFF(\Torus)$ be a function such that its KR-graph $\fKRGraph$ has only one cycle, $\curveMeridian$ be a regular connected component of certain level set of $\func$ not separating $\Torus$, and 
\[ \CrvList = \{\dif(\curveMeridian) \mid \dif\in\StabPrf \} = \{ 
\curveMeridian_0=\curveMeridian, \ \curveMeridian_1, \ \ldots,  \ \curveMeridian_{n-1} \},
\]
see Figure~\ref{fig:f_curves}.
We will now define several constructions ``adopted'' with $\func$.

\subsection*{Special coordinates.}
As the curves $\{\curveMeridian_i \mid i=0,\ldots,n-1\}$ are ``parallel'', one can assume (by a proper choice of coordinates on $\Torus$) that the following two conditions hold:
\begin{enumerate}
\item[\rm(a)]
$\curveMeridian_i = \frac{i}{n} \times S^1 \ \subset \ \RRR^2/\ZZZ^2 \equiv \Torus$\,;
\item[\rm(b)] 
there exists $\eps>0$ such that for all $t \in J_{\eps}(\tfrac{i}{n}) = (\tfrac{i}{n}-\eps,\tfrac{i}{n} +\eps)$ the curve $t \times S^1$ is a regular connected component of some level set of $\func$.
\end{enumerate}
It is convenient to regard each $\curveMeridian_k$ as a \emph{meridian} of $\Torus$.
Let $\curveParallel = S^1 \times 0$ be the corresponding \emph{parallel}.
Then $\curveParallel \cap \curveMeridian_{i}  = \tfrac{i}{n}$. 

\subsection*{Isotopies $\flowParallel$ and $\flowMeridian$.}
Let $\flowParallel, \flowMeridian: \Torus \times [0,1]\to \Torus$ be two isotopies defined by
\begin{align}\label{equ:LM_flows}
\flowParallel(x,y,t) &= (x + t \ \mathrm{mod} \ 1,\  y),
&
\flowMeridian(x,y,t) &= (x, \ y + t \ \mathrm{mod} \ 1),
\end{align}
for $x\in \curveParallel$, $y\in \curveMeridian$, and $t\in[0,1]$.
Geometrically, $\flowParallel$ is a ``\emph{rotation}'' of the torus along its parallels and $\flowMeridian$ is a rotation along its meridians.
We can regard them as loops in $\pi_1\DiffT$.
Denote by $\subgroupParallel$ and $\subgroupMeridian$ the subgroups of $\pi_1\DT$ generated by loops $\flowParallel$ and $\flowMeridian$ respectively.
It is well known that that 
$\subgroupParallel$ and $\subgroupMeridian$ are commuting free cyclic groups, and so we get an isomorphism:
\[
\pi_1\DT \cong \subgroupParallel \times \subgroupMeridian.
\]

Also notice that $\flowParallel$ and $\flowMeridian$ can be also regarded as \emph{flows} $\flowParallel, \flowMeridian: \Torus \times \RRR\to \Torus$ defined by the same formulas Eq.~\eqref{equ:LM_flows} for $(x,y,t)\in\Torus\times\RRR$.
All orbits of the \textit{flows} $\flowParallel$ and $\flowMeridian$ are periodic of period $1$.

\subsection*{A flow $\flow$.}
Since $\Torus$ is an orientable surface, one can construct a ``Hamiltonian like'' flow $\flow:\Torus\times\RRR\to\Torus$ having the following properties, see e.g.~\cite[Lemma~5.1]{Maksymenko:AGAG:2006}:
\begin{itemize}
\item[1)]
a point $z\in\Torus$ is fixed for $\flow$ if and only if $z$ is a critical point of $\func$;
\item[2)]
$\func$ is constant along orbits of $\flow$, that is $f(z) = f(\flow(z,t))$ for all $z\in\Torus$ and $t\in\RRR$.
\end{itemize}
It follows that every critical point of $\func$ and every regular components of every level set of $\func$ is an orbit of $\flow$.

In particular, each curve $t\times S^1$ for $t\in J_{\eps}(\tfrac{i}{n})$, $i=0,\ldots,n-1$, is an orbit of $\flow$.
On the other hand, this curve is also an orbit of the flow $\flowMeridian$.
Therefore, we can always choose $\flow$ so that
\begin{equation}\label{equ:relation_flowMer_flowHam}
\flowMeridian(x,y,t) = \flow(x,y,t),
\end{equation}
for $(x,y,t) \in J_{\eps}(\tfrac{i}{n})\times S^1\times\RRR$ and $i=0,\ldots,n-1$.

\begin{lemma}\label{lm:shift_functions}{\rm\cite{Maksymenko:AGAG:2006, Maksymenko:ProcIM:ENG:2010}}.
Suppose a flow $\flow:\Torus\times\RRR\to\Torus$ satisfies the above conditions {\rm1)} and {\rm2)}.
Then the following statements hold.

\medskip 
{\rm(1)}~Let $\dif\in\Stabf$.
Then $h\in\StabIdf$ if and only if there exists a $C^{\infty}$ function $\alpha:\Torus\to\RRR$ such that $\dif = \flow_{\alpha}$, see~\eqref{equ:shooth_shift}.
Such a function is unique and the family of maps $\{\flow_{t\alpha}\}_{t\in I}$ constitute an isotopy between $\id_{M}$ and $\dif$, \cite[Lemma~16]{Maksymenko:ProcIM:ENG:2010}.

\medskip 
{\rm(2)}~Suppose $\curveMeridian$ is a regular component of some level set of $\func$ and $\dif\in\Stabf$ be such that $\dif(\curveMeridian)=\curveMeridian$ and $\dif$ preserves orientation of $\curveMeridian$.
Let also $\Nman$ be an arbitrary open neighbourhood of $\curveMeridian$.
Then each $\dif\in\Stabf$ is isotopic in $\Stabf$ via an isotopy supported in $\Nman$ to a diffeomorphism $\gdif$ fixed on some smaller neighbourhood of $\curveMeridian$.
In particular, $[\dif]=[\gdif]\in\pi_0\Stabf$, \cite[Lemma~4.14]{Maksymenko:AGAG:2006}.

\medskip 
{\rm(3)}~Let $\Xman$ be a finite disjoint union of regular components of some level sets of $\func$, and $\Nman$ be an open neighbourhood of $\Xman$.
Then there exists a smaller open neighbourhood $\Uman\subset\Nman$ of $\Xman$ such that $\overline{\Uman}\subset\Nman$ and each  $\dif\in\StabIdf$ is isotopic in $\Stabf$ relatively to $\overline{\Uman}$ to a diffeomorphism $\gdif$ fixed on $\Mman\setminus\Nman$.
In particular, $\gdif\in\StabIdf$ as well.
Moreover, if $\dif = \flow_{\alpha}$, then one can assume that $\gdif = \flow_{\beta}$, where $\beta=\alpha$ on $\Uman$ and $\beta=0$ on $\Mman\setminus\Nman$, \cite[Lemma~4.14]{Maksymenko:AGAG:2006}.
\qed
\end{lemma}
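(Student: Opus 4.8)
The plan is to exploit that the orbits of $\flow$ are precisely the critical points of $\func$ (as fixed points, by condition 1)) together with the regular connected components of the level sets of $\func$, and that $\flow$ acts on each regular orbit as a free periodic circle action whose period $\theta>0$ depends continuously on the orbit; condition 2) says exactly that every time-shift along $\flow$ preserves $\func$. For the ``if'' direction of (1), suppose $\dif=\flow_{\alpha}$. By 2) we have $\func(\flow_{t\alpha}(z))=\func(z)$, so each $\flow_{t\alpha}$ lies in $\Stabf$, and by Lemma~\ref{lm:isotopy_between_shifts} each is a diffeomorphism; hence $\{\flow_{t\alpha}\}_{t\in I}$ is a path in $\Stabf$ joining $\id_{\Mman}$ to $\dif$, so $\dif\in\StabIdf$. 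Uniqueness of $\alpha$ follows because two shift functions realizing the same $\dif$ differ on each regular orbit by an integer multiple of $\theta$; continuity of that integer together with the degeneration of orbits at critical points (shrinking orbits at extrema, bounded $\alpha$ near saddle separatrices) forces it to vanish identically. The ``only if'' direction is the technical heart: given an isotopy $\dif_s\in\Stabf$ from $\id_{\Mman}$ to $\dif$, each $\dif_s$ permutes the orbits and, starting from the identity, must map every orbit onto itself, so $\dif$ preserves each orbit and, being isotopic to the identity, preserves its orientation; thus on regular orbits $\dif(z)=\flow(z,\alpha(z))$ for a single-valued shift $\alpha(z)$ obtained by tracking the isotopy. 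The real content is that $\alpha$ extends smoothly across the critical set, and here I would invoke property (L), using the local homogeneous-polynomial model of $\func$ at each critical point, in which $\flow$ is an explicit flow and the shift function of any $\func$-preserving germ is visibly $C^{\infty}$, and then glue with the smooth behaviour on the regular part, as in \cite{Maksymenko:AGAG:2006} and \cite[Lemma~16]{Maksymenko:ProcIM:ENG:2010}.

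For part (3) the key is a refinement of Lemma~\ref{lm:isotopy_between_shifts}: if $\flow_{\alpha}$ is a diffeomorphism and $\mu:\Torus\to[0,1]$ is smooth and constant along the orbits of $\flow$, then $\flow_{\mu\alpha}$ is again a diffeomorphism, since on each orbit $\mu$ is a constant in $[0,1]$ and the one-orbit version of Lemma~\ref{lm:isotopy_between_shifts} applies. As each component of $\Xman$ is regular, $\func$ is a transverse coordinate there and $\Nman$ may be shrunk to a flow-saturated (orbit-union) neighbourhood; choose $\Uman$ with $\overline{\Uman}\subset\Nman$ and a smooth $\mu=\nu\circ\func$, constant along orbits, equal to $1$ on $\Uman$ and $0$ off $\Nman$. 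Writing $\dif=\flow_{\alpha}$ by (1) and setting $\gdif=\flow_{\mu\alpha}$, the family $\flow_{\alpha_s}$ with $\alpha_s=(1-s(1-\mu))\alpha$ has each multiplier $1-s(1-\mu)\in[0,1]$ constant along orbits, hence consists of diffeomorphisms; it is stationary on $\overline{\Uman}$ (where $\mu=1$), stays in $\StabIdf$, and carries $\dif$ to $\gdif$, which equals $\dif$ on $\Uman$, equals $\id$ off $\Nman$, and has the form $\flow_{\beta}$ with $\beta=\mu\alpha$, exactly as required.

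Part (2) I would prove directly by a fibrewise convex combination. Near the regular orbit $\curveMeridian$ choose a flow-saturated neighbourhood $\Nman\cong(-\delta,\delta)\times S^1$ in which the orbits are the circles $\{u\}\times S^1$, $\func=\func(u)$ is strictly monotone, and $\flow$ rotates each circle. Since $\dif$ preserves $\func$ and fixes $\curveMeridian$, for small $\delta$ it preserves each circle, so $\dif(u,v)=(u,\phi_u(v))$ with $\phi_u$ an orientation-preserving circle diffeomorphism (the orientation on $\curveMeridian$ is preserved by hypothesis, hence on nearby orbits). Lift $\phi_u$ to an increasing $\widetilde{\phi}_u:\RRR\to\RRR$ with $\widetilde{\phi}_u(x+1)=\widetilde{\phi}_u(x)+1$, pick a cutoff $\kappa(u)$ equal to $1$ near $u=0$ and $0$ near $u=\pm\delta$, and set $\widetilde{\phi}_u^{\,s}=(1-s\kappa(u))\widetilde{\phi}_u+s\kappa(u)\,\id$; each remains increasing, so descends to an orientation-preserving circle diffeomorphism, and the family $(u,v)\mapsto(u,\widetilde{\phi}_u^{\,s}(v)\bmod 1)$, extended by the identity, is an isotopy in $\Stabf$ supported in $\Nman$. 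It fixes $\dif$ near $\partial\Nman$ (where $\kappa=0$) and straightens it to the identity near $\curveMeridian$ (where $\kappa=1$), so its time-one map $\gdif$ is fixed on a smaller neighbourhood of $\curveMeridian$, whence $[\dif]=[\gdif]\in\pi_0\Stabf$.

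I expect the single genuine obstacle to be the smooth extension of the shift function $\alpha$ across the critical set in the ``only if'' part of (1): on the regular part $\alpha$ is transparently $C^{\infty}$, but its smoothness over possibly degenerate critical points is exactly where property (L) and the local normal forms of \cite{Maksymenko:AGAG:2006} are essential. Parts (2) and (3), by contrast, are elementary once the orbit structure of $\flow$ and Lemma~\ref{lm:isotopy_between_shifts} are in hand.
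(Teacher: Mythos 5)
First, note that the paper itself gives no proof of this lemma: it is stated with a \qed and imported verbatim from \cite[Lemma~4.14]{Maksymenko:AGAG:2006} and \cite[Lemma~16]{Maksymenko:ProcIM:ENG:2010}, so there is no in-paper argument to compare against. Measured against the proofs in those sources, your parts (2) and (3) are essentially correct and self-contained, and they follow the same mechanism as the originals: for (3), cutting off the shift function by a factor $\mu=\nu\circ\func$ constant along orbits and deforming through $\flow_{(1-s(1-\mu))\alpha}$ is exactly how one obtains $\gdif=\flow_{\beta}$ with $\beta=\alpha$ on $\Uman$ and $\beta=0$ off $\Nman$; for (2), straightening the fibrewise circle maps by convex combination of lifts (using orientation-preservation to keep the lifts increasing) is the standard argument. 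One small point in (3): applying the ``one-orbit version'' of Lemma~\ref{lm:isotopy_between_shifts} orbit by orbit gives bijectivity of $\flow_{\mu\alpha}$ but not by itself that it is a local diffeomorphism; you should either check the derivative condition $1+d(\mu\alpha)(V)=(1-\mu)+\mu\bigl(1+d\alpha(V)\bigr)>0$ along the generating vector field $V$, or invoke \cite{Maksymenko:TA:2003} where this is done.

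The genuine gap is in part (1), and you have correctly located it but not closed it. The ``only if'' direction — that every $\dif\in\StabIdf$ is of the form $\flow_{\alpha}$ with $\alpha$ smooth across the critical set — is the entire content of the cited Lemma~16; it rests on the local normal forms guaranteed by property (L) and on division-type arguments near degenerate critical points, none of which is reproduced here, so this part of your text is a restatement of the claim rather than a proof. In addition, your uniqueness argument leans on the wrong mechanism at local extrema: the period function $\theta$ of the closed orbits does \emph{not} in general shrink to zero as the orbits collapse to a centre (for the linearized model it tends to a positive constant), so ``shrinking orbits at extrema'' does not force the integer $\gamma/\theta$ to vanish there. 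What actually kills it is that $\theta\to\infty$ near every saddle level, so the locally constant integer vanishes on every regular annulus adjacent to a saddle, and for $\func\in\FFF(\Torus)$ every regular annulus of the foliation has a saddle level in its closure (an edge of $\fKRGraph$ joining two extrema would produce a sphere). With that correction, and with the existence/smoothness of $\alpha$ still delegated to \cite{Maksymenko:ProcIM:ENG:2010}, your account of (1) is accurate as a sketch but not as a proof.
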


\subsection*{Special slides.}
It follows from~\eqref{equ:relation_flowMer_flowHam} and~\eqref{equ:LM_flows} that each $\curveMeridian_k$ is an orbit of the flow $\flow$ of period $1$.
Let $\alpha, \beta:[-1,1]\to[0,1]$ be the functions defined in~\S\ref{sect:twists_and_slides}, see Figure~\ref{fig:twist_and_slide}, and $\eps$ be the same as in~\eqref{equ:relation_flowMer_flowHam}. 
Define two diffeomorphisms $\dtw_i, \slide_i:\Torus\to\Torus$, $i=0,\ldots,n-1$, by the formulae:
\begin{equation}\label{equ:special_twists}
\dtw_i(x,y) =
\begin{cases}
\flow\bigl(\,x,y, \alpha((y - \tfrac{i}{n})/2\eps) \,\bigr), & (x,y)\in J_{\eps}(\tfrac{i}{n})\times S^1, \\
(x,y), & \text{otherwise},
\end{cases}
\end{equation}
\begin{equation}\label{equ:special_slides}
\slide_i(x,y) =
\begin{cases}
\flow\bigl(\,x,y, \beta((y - \tfrac{i}{n})/2\eps) \,\bigr), & (x,y)\in J_{\eps}(\tfrac{i}{n})\times S^1, \\
(x,y), & \text{otherwise}.
\end{cases}
\end{equation}
Evidently, $\dtw_i$ is a Dehn twist and $\slide_i$ is a \myemph{slide} along $\curveMeridian_i$ in the sense of~\S\ref{sect:twists_and_slides}.

Notice that $\func\circ\slide_i=\func$, $\slide_i$ is isotopic to $\idT$, and $\slide_k$ is also fixed on some neighbourhood of $\CrvList$.
In other words, 
\[\slide_i \ \in \ \Stabf \cap \DiffIdT \cap \DiffTC \ = \ \Stabf\cap\grp,\]
see~\eqref{equ:group_of_slides}.
Moreover, $\supp(\slide_i) \cap \supp(\slide_j)=\varnothing$ for $i\not=j \in\{1,\ldots,n-1\}$.
Let also
\begin{equation}\label{equ:all_slides_product}
\slide = \slide_0 \circ \cdots \circ \slide_{n-1}.
\end{equation}
Then by Theorem~\ref{th:slides_gen_pi0G} $\slide\in\Stabf \cap \DiffIdTC = \SfC$.
Let $\cclass{\slide}$ be the isotopy class of $\slide$ in $\pi_0\SfC$, and $\Slide = \langle \cclass{\slide} \rangle$ be the subgroup of $\pi_0\SfC$ generated by $\cclass{\slide}$.

The following lemma is an easy consequence of~\eqref{equ:special_slides} and~\eqref{equ:all_slides_product} and we leave it for the reader.
\begin{lemma}\label{lm:slide_properties}
$\slide = \flow_{\sigma} = \flowMeridian_{\sigma}$ for some $C^{\infty}$ function $\sigma$ such that $\sigma=1$ on $\CrvList$.
Moreover, as $\sigma$ is constant along orbits of $\flow$, it follows from~\cite[Eq.~(8)]{Maksymenko:TA:2003} and can easily be shown, that $\slide^{k} = \flow_{k\sigma}$ for all $k\in\ZZZ$.
\qed
\end{lemma}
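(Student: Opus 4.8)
The plan is to write down $\sigma$ explicitly as the total shift function of the product $\slide$, read off the two stated properties of $\sigma$, and then obtain the power formula from the group law of the flow $\flow$.

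First I would set $\sigma$ equal, on each collar $J_\eps(\tfrac{i}{n})\times S^1$, to the shift function $\sigma_i$ of $\slide_i$ appearing in~\eqref{equ:special_slides} (a function of the coordinate transverse to $\curveMeridian_i$ alone, built from $\beta$), and to $0$ on the complement of $\bigcup_i J_\eps(\tfrac{i}{n})\times S^1$. Since the supports $\supp(\slide_i)$ are pairwise disjoint and each is contained in its collar, and since every $\flow$-orbit meeting a collar stays inside it, for $z\in\supp(\slide_i)$ every other factor of $\slide=\slide_0\circ\cdots\circ\slide_{n-1}$ fixes both $z$ and $\slide_i(z)$; hence $\slide(z)=\slide_i(z)=\flow(z,\sigma(z))$, while $\slide(z)=z=\flow(z,0)$ off all supports. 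Thus $\slide=\flow_\sigma$. Smoothness of $\sigma$ follows from that of each $\sigma_i$ together with its vanishing near the collar boundary and the disjointness of the collars, and on each $\curveMeridian_i$ the argument of $\beta$ is $0$, so $\sigma|_{\curveMeridian_i}=\beta(0)=1$ and therefore $\sigma|_{\CrvList}=1$.

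Next, to obtain $\slide=\flowMeridian_\sigma$ I would invoke the compatibility~\eqref{equ:relation_flowMer_flowHam}, which states that $\flow$ and $\flowMeridian$ coincide on $J_\eps(\tfrac{i}{n})\times S^1\times\RRR$. As $\supp(\sigma)$ is contained in $\bigcup_i J_\eps(\tfrac{i}{n})\times S^1$, for $z$ in this support we have $\flow(z,\sigma(z))=\flowMeridian(z,\sigma(z))$, while off the support both shifts fix $z$ because $\sigma(z)=0$; hence $\flow_\sigma=\flowMeridian_\sigma$. To see that $\sigma$ is constant along the orbits of $\flow$, note that on $\supp(\sigma)$ these orbits agree with the orbits of $\flowMeridian$, i.e. the circles $\{x\}\times S^1$, along which $\sigma$ is constant because it depends only on the transverse coordinate; moreover each such orbit lies entirely in a single collar, so no orbit crosses between $\{\sigma\ne0\}$ and $\{\sigma=0\}$, and off the support $\sigma\equiv0$ is trivially orbit-constant. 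Hence $\sigma\circ\flow(\,\cdot\,,t)=\sigma$ for every $t$.

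Finally, the power formula follows from orbit-constancy and the group law $\flow(\flow(z,s),t)=\flow(z,s+t)$: since $\sigma(\flow_\sigma(z))=\sigma(z)$, one gets $\slide^2(z)=\flow_\sigma^2(z)=\flow(\flow(z,\sigma(z)),\sigma(z))=\flow(z,2\sigma(z))=\flow_{2\sigma}(z)$, and an induction (taking inverses for $k<0$) yields $\slide^k=\flow_\sigma^k=\flow_{k\sigma}$, which is precisely the identity recorded in~\cite[Eq.~(8)]{Maksymenko:TA:2003}. I expect the only genuinely delicate point to be the orbit-constancy of $\sigma$: it is exactly the coincidence~\eqref{equ:relation_flowMer_flowHam} of $\flow$ with the model rotation $\flowMeridian$ over the collars that turns $\sigma$ into a $\flow$-orbit invariant, and without this the powers of $\slide$ would not collapse to shifts $\flow_{k\sigma}$ along $\flow$.
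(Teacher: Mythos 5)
Your proof is correct and is exactly the direct verification the paper intends: the lemma is stated without proof as "an easy consequence of \eqref{equ:special_slides} and \eqref{equ:all_slides_product}", and your argument derives it from precisely those formulas together with \eqref{equ:relation_flowMer_flowHam} and the group law of $\flow$. One small point of wording: off the supports of the $\slide_i$ but still inside a collar $J_{\eps}(\tfrac{i}{n})\times S^1$ one has $\sigma(z)\in(0,1]$ rather than $\sigma(z)=0$, and there $\flow(z,\sigma(z))=z$ holds because $\slide_i(z)=\flow(z,\sigma(z))$ by \eqref{equ:special_slides} while $\slide_i$ fixes $z$ (equivalently, because the $\flow$-orbit through $z$ has period $1$), so it is cleaner to split the verification into points inside a collar versus points outside all collars.
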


\section{Two epimorphisms}\label{sect:two_epimorphisms}
In the notation of~\S\ref{sect:some_constructions} let $\func\in\FFF(\Torus)$ be such that its KR-graph $\fKRGraph$ has exactly one cycle, $\curveMeridian$ be a regular connected component of certain level set $\func^{-1}(c)$ of $\func$ that does not separate $\Torus$,
\[ \CrvList = \{\dif(\curveMeridian) \mid \dif\in\StabPrf \}\]
be the corresponding family of curves parallel to $\curveMeridian$, and $n$ be the number of curves in $\CrvList$.
The case $n=1$ is considered in~\cite{MaksymenkoFeshchenko:MS:2014}, therefore we will assume that $n\geq1$.

For simplicity we will introduce the following notation:
\begin{align*}
\DT &:= \DiffId(\Torus), &
\Of &:= \Orbit_{\func}(\func), &
\Sf &:= \Stab'(\func), &
\Sidf &:= \StabId(\Torus), \\
\DTC &:= \DiffId(\Torus, \CrvList), &
\OfC &:= \Orbit_{\func}(\func,\CrvList), &
\SfC &:= \Stab'(\func,\CrvList), &
\SidfC &:= \StabId(\func,\CrvList), \\
\DQ &:= \DiffId(\Cyl_0, \partial\Cyl_0), &
\OfQ &:= \Orbit(\func|_{\Cyl_0}, \partial\Cyl_0), &
\SfQ &:=\Stab(\func|_{\Cyl_0}, \partial\Cyl_0).
\end{align*}

Our aim is to construct an isomorphism $\pi_1\Of \ \cong \pi_1\OfQ \wrm{n}\ZZZ$. 
Due to (2) of Theorem~\ref{th:fibration_DMX_Of} we have isomorphisms: 
\begin{align*}
\pi_1(\DTC, \SfC) & \ \cong \ \pi_1 \OfC, &
\pi_1(\DT, \Sf) & \ \cong \ \pi_1 \Of, &\
\pi_1(\DQ, \SfQ) & \ \cong \ \pi_1 \OfQ,
\end{align*}
and so we are reduced to finding an isomorphism
\begin{equation}\label{equ:required_iso}
\xi: \pi_1(\DQ, \SfQ) \wrm{n} \ZZZ \cong \pi_1(\DT, \Sf).
\end{equation}

Let $i:(\DTC, \SfC) \subset (\DT, \Sf)$ be the inclusion map.
It yields a morphism between the exact sequences of homotopy groups of these pairs, see Theorems~\ref{th:serre_fibr} and~\ref{th:fibration_DMX_Of}.
The non-trivial part of this morphism is contained in the following commutative diagram:
\begin{equation}\label{equ:morphism_of_sequences}
\begin{CD}
1 @>{}>> 1 @>>> \pi_1(\DTC,\SfC) @>{\dc}>> \pi_0\SfC @>{}>> 1 \\
&& @V{}VV @V{\jO}VV @VV{\jZ}V \\
1 @>{}>> \pi_1\DT @>{\qhom}>> \pi_1(\DT,\Sf) @>{\partial}>> \pi_0\Sf @>{}>> 1
\end{CD}
\end{equation}

In this section we describe kernel and images of all homomorphisms from~\eqref{equ:morphism_of_sequences}, see Theorem~\ref{th:two_epimorphisms} below.
For $n=1$ this diagram is studied in~\cite{MaksymenkoFeshchenko:MS:2014}.

\medskip 
For $\dif\in\Sf$ we will denote by $[\dif]$ its isotopy class in $\pi_0\Sf$.
If $\dif\in\SfC$, then its isotopy class in $\pi_0\SfC$ will be denoted by $\cclass{\dif}$.
Evidently,
\[
\jZ\bigl(\cclass{\dif}\bigr) = [\dif].
\]
Similarly, for a path $\omega:(I,\partial I, 0) \longrightarrow (\DT,\Sf, \idT)$ we will denote by $[\omega]$ its homotopy class in $\pi_1(\DT,\Sf)$.
If $\omega(I,\partial I, 0) \subset (\DTC,\SfC, \idT)$, then we denote by $\cclass{\omega}$ is homotopy class in $\pi_1(\DTC, \SfC)$.
Again
\[
\jO\bigl(\cclass{\omega}\bigr) = [\omega].
\]
Recall also that the boundary homomorphism $\dc:\pi_1(\DTC,\SfC) \longrightarrow \pi_0\SfC$ is defined as follows: if $\omega: (I,\partial I, 0) \to (\DTC,\SfC,\idT)$ is a continuous path, then
\[ \dc\bigl(\cclass{\omega}\bigr) = \cclass{\omega(1)} \in\pi_0\SfC.\]

\begin{theorem}\label{th:two_epimorphisms}
In the notation above there exist two epimorphisms
\begin{align*}
&\eval:\pi_1(\DT, \Sf)  \longrightarrow \ZZZ,
&
&\krot:\pi_0\Sf \longrightarrow \ZZZ_n,
\end{align*}
such that the following diagram is commutative:
\begin{equation}\label{equ:extended_comm_diagram}
\begin{gathered}
\xymatrix@R=0.6cm@C=1.2cm{
	& & & 1 \ar[d] & \\
	& & 1 \ar[d] & \Theta \ar[d] & \\
	& & \pi_1(\mathcal{D}^{\id}_{\mathcal{C}}, \mathcal{S}_{\mathcal{C}}) \ar[r]_-{\cong}^-{\partial_{\mathcal{C}}}  \ar[d]^-{i_1} & \pi_0\mathcal{S}_{\mathcal{C}}  \ar[d]^{i_0}& \\
	1 \ar[r] & \mathcal{L}\times \mathcal{M} \ar[d]^{\mathrm{pr}} \ar[r]^-{q} & \pi_1(\mathcal{D}^{\id},\mathcal{S}) \ar[d]^{\varphi} \ar[r]^-{\partial} & \pi_0 \mathcal{S} \ar[d]^{k} \ar[r] & 1\\
	1 \ar[r]  & \mathcal{L} \ar[r]^-{\cdot n} & \mathbb{Z} \ar[d] \ar[r]^-{\mathrm{mod}\, n} & \mathbb{Z}_n \ar[d]  \ar[r] & 1\\
	& & 1 & 1 &
}
\end{gathered}
\end{equation}
Here the arrow $\xrightarrow{~\cdot n~}$ means a unique monomorphism associating to the generator $\flowParallel\in\subgroupParallel$ the number $n$.
Moreover, the following statements hold true.
\begin{enumerate}
\item[\rm(a)]
$\qhom\bigl(\subgroupMeridian\bigr) = \jO\circ\dc^{-1}\bigl(\Slide\bigr)$; 
\item[\rm(b)]
all rows and columns in diagram~\eqref{equ:extended_comm_diagram} are exact;
\item[\rm(c)]
there exists a path $\gamma:(I,\partial I, 0) \to (\DT, \Sf, \idT)$ such that 
\begin{align*}
\eval[\gamma]&=1, & \gamma(1)^n &= \idT.
\end{align*}
\end{enumerate}
\end{theorem}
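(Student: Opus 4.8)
The plan is to construct the two homomorphisms explicitly, read off the two ``defining'' relations that glue the rows of \eqref{equ:extended_comm_diagram} together, deduce exactness of all columns by a formal chase, and finally establish the two geometric statements (a) and (c). Throughout we assume $n\ge 2$. To \emph{define} $\krot$, note that every $\dif\in\Sf$ preserves the level set containing $\CrvList$ and, since $\CrvList$ is the full $\Sf$-orbit of $\curveMeridian$, maps $\CrvList$ onto itself; as $\dif$ preserves the cyclic order of the curves along $\Torus$ it acts as a shift $\curveMeridian_i\mapsto\curveMeridian_{i+r}$, and we set $\krot([\dif])=r\in\ZZZ_n$. This is locally constant along isotopies in $\Sf$, hence a well-defined homomorphism, and it is onto because $\curveMeridian_1=\dif_1(\curveMeridian)$ for some $\dif_1\in\Sf$, whence $\krot([\dif_1])=1$. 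To \emph{define} $\eval$, represent a class of $\pi_1(\DT,\Sf)$ by a path $\omega$ with $\omega(0)=\idT$, $\omega(1)\in\Sf$, lift it to the unique path $\widetilde\omega$ of diffeomorphisms of $\RRR^2$ with $\widetilde\omega(0)=\id$, and observe that by the coordinate choice (a) the preimage of $\CrvList$ consists of the vertical lines $\{l/n\}\times\RRR$, $l\in\ZZZ$. Since $\widetilde\omega(1)(\{0\}\times\RRR)$ is connected and lies over $\CrvList$, it is exactly one such line $\{\eval[\omega]/n\}\times\RRR$, defining $\eval[\omega]\in\ZZZ$; continuity of the lift gives homotopy invariance. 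The computations $\widetilde{\flowParallel}(1)=(x+1,y)$ and $\widetilde{\flowMeridian}(1)=(x,y+1)$ yield $\eval(\qhom(\flowParallel))=n$, $\eval(\qhom(\flowMeridian))=0$, while the homomorphism property follows from the product formula of Lemma~\ref{lm:prod_in_rel_pi1} together with the fact that a lift of an element of $\Sf$ carries $\{l/n\}\times\RRR$ to $\{(l+\eval)/n\}\times\RRR$ (it preserves the order and spacing of these lines and commutes with integer translations). These are precisely the relations $\eval\circ\qhom=(\cdot\,n)\circ\mathrm{pr}$ and $\krot\circ\partial=(\mathrm{mod}\,n)\circ\eval$ that make \eqref{equ:extended_comm_diagram} commute.

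For part (b) the three rows are exact for known reasons: the middle row is \eqref{equ:T2_pi1Of_exact_sequence}, the bottom row is the tautological sequence $1\to\subgroupParallel\to\ZZZ\to\ZZZ_n\to1$, and in the top row $\dc$ is an isomorphism because, by Theorem~\ref{th:fibration_DMX_Of} applied with $\CrvList\neq\varnothing$, both $\DTC$ and $\SidfC$ are contractible. Exactness of the columns is then a diagram chase of nine-lemma type: injectivity of $\jO$ and of $\Slide\hookrightarrow\pi_0\SfC$, the identities $\ker\eval=\IM\jO$ and $\ker\krot=\IM\jZ$, and the surjectivity of $\eval$ and $\krot$ all follow from exactness of the rows and the two commutation relations. (Surjectivity of $\eval$, in particular, comes from $n=\eval(\qhom(\flowParallel))$ together with surjectivity of $\krot$ and $\krot\circ\partial=(\mathrm{mod}\,n)\circ\eval$, and so does not require (c).)

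For part (a) I would represent $\dc^{-1}(\cclass{\slide})$ by \emph{any} path $\omega$ in $\DTC$ from $\idT$ to $\slide$ — such a path exists since $\DTC$ is path-connected and $\slide\in\DiffId(\Torus,\CrvList)$ by Theorem~\ref{th:slides_gen_pi0G} — so that $\jO(\dc^{-1}(\cclass{\slide}))=[\omega]$, and compare it with the flow isotopy $p_t=\flow_{t\sigma}$ from Lemma~\ref{lm:slide_properties}, which runs from $\idT$ to $\slide$ \emph{inside} $\Sf$. Because $p$ lies in $\Sf$, the relative class satisfies $[\omega]=\qhom([\omega\ast\overline p])$, and $[\omega\ast\overline p]\in\pi_1\DT\cong\subgroupParallel\times\subgroupMeridian$ corresponds to the deck transformation by which the two lifts $\widetilde\omega(1)$ and $\widetilde p(1)$ of $\slide$ differ. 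Here the geometry enters: on a neighbourhood of $\CrvList$ one has $\flow=\flowMeridian$ of period $1$ and $\sigma\equiv1$, so $\widetilde p(1)=(x,y+1)$ there, whereas $\omega$ is fixed near $\CrvList$ and hence $\widetilde\omega(1)=\id$ there; thus the two lifts differ by the translation $(0,1)$, giving $[\omega]=\qhom(\flowMeridian)^{-1}$. As $\Slide=\langle\cclass{\slide}\rangle$ and all maps involved are homomorphisms, this yields $\jO\circ\dc^{-1}(\Slide)=\qhom(\subgroupMeridian)$, which is statement (a).

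Finally, part (c), which I expect to be the main obstacle, demands an honest construction of a diffeomorphism of order \emph{exactly} $n$ (the new content beyond surjectivity of $\eval$). Near each $\curveMeridian_i$ the choices (a),(b) make $\func$ a function of the transverse coordinate $x$ alone; choosing orientation-preserving reparametrisations $\psi_i\colon J_\eps(\tfrac{i}{n})\to J_\eps(\tfrac{i+1}{n})$ that match these transverse profiles forces $\psi_{n-1}\circ\cdots\circ\psi_0=\id$ by injectivity of the profile, so the product maps $(x,y)\mapsto(\psi_i(x),y)$ patch to a $\func$-preserving order-$n$ ``rotation'' on a neighbourhood of $\CrvList$. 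I would then extend over the cylinders by $\func$-preserving diffeomorphisms $\Phi_i\colon\Cyl_i\to\Cyl_{i+1}$ (which exist because $\Sf$ permutes the cylinders transitively) agreeing with the $\psi_i$-model near the boundary, and \emph{define} the last one as $\Phi_{n-1}:=(\Phi_{n-2}\circ\cdots\circ\Phi_0)^{-1}$; this still preserves $\func$, still matches the model near $\CrvList$, and forces $\dif^{\,n}=\idT$ for the resulting diffeomorphism $\dif$. One checks $\dif\in\Sf$ (it preserves $\func$ and acts trivially on $H_1(\Torus)$, hence is isotopic to $\idT$) and $\krot([\dif])=1$. Taking $\gamma$ to be any isotopy from $\idT$ to $\dif$ gives $\eval[\gamma]\equiv1\pmod n$ by the commutation relation, and composing $\gamma$ with a suitable power of the loop $\flowParallel$ — which leaves the endpoint $\dif$ unchanged but shifts $\eval$ by multiples of $n$ — adjusts it to $\eval[\gamma]=1$, proving (c). The delicate points are the smooth patching across each $\curveMeridian_i$ and the verification that the forced map $\Phi_{n-1}$ is genuinely compatible with the model, i.e. that the construction is cyclically consistent.
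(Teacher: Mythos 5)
Your constructions of $\krot$ and $\eval$, your argument for (a), and your construction in (c) all match the paper's proof in substance (the paper uses the $n$-fold cover $\RRR\times S^1\to\Torus$ where you use $\RRR^2$, and it forces $\dif^n=\idT$ by redefining $\dif$ on the last cylinder exactly as you do with $\Phi_{n-1}$). But there is a genuine gap in part (b): the exactness of the right-hand column is \emph{not} a formal nine-lemma chase from the rows, the two commutation relations, and (a). The two hard inclusions are $\ker\jZ\subset\Slide$ and $\ker\krot\subset\image(\jZ)$, and both are geometric statements about $\func$-preserving diffeomorphisms. If you try to chase them, you find they are equivalent (modulo the formal part) to injectivity of $\jO$ and to $\ker\eval\subset\image(\jO)$ respectively, so the chase is circular: you cannot establish the middle column from the right one or vice versa without an external input. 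Concretely, $\ker\jZ=\pi_0\bigl(\StabIdf\cap\DiffIdTC\bigr)$, and showing this group is the infinite cyclic group $\Slide=\langle\cclass{\slide}\rangle$ is the paper's Lemma~\ref{lm:ker_j0}; its proof writes every $\dif\in\StabIdf$ as a shift $\flow_{\alpha}$ along the Hamiltonian-like flow (Lemma~\ref{lm:shift_functions}), shows $\alpha$ takes one and the same integer value near all the $\curveMeridian_i$ because $\dif|_{\Cyl_i}$ is trivial in $\pi_0\Diff(\Cyl_i,\partial\Cyl_i)$, and deforms $\alpha$ linearly to $k\sigma$. Likewise $\ker\krot\subset\image(\jZ)$ needs Lemma~\ref{lm:shift_functions}(2) to make a representative fixed near $\CrvList$ and then Corollary~\ref{cor:slides_gen_pi0G} (the computation $\pi_0\grp\cong\ZZZ^{n-1}$ generated by the slides) to absorb the resulting element of $\grp$ into $\SfC$ up to factors $\slide_i\in\StabIdf$. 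None of this is visible from the diagram alone, so your sentence ``all follow from exactness of the rows and the two commutation relations'' hides the actual content of (b).

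Two smaller points. First, the easy inclusions ($\Slide\subset\ker\jZ$, $\image(\jZ)\subset\ker\krot$, $\image(\jO)\subset\ker\eval$) and, given the two geometric facts above, the remaining chase for the middle column do go through essentially as you indicate, using your (a) and $\eval\circ\qhom(\flowParallel)=n$; so the missing piece is localized. Second, in (c) your justification that $\dif$ is isotopic to $\idT$ (``acts trivially on $H_1$'') is asserted rather than proved: a priori an orientation-preserving diffeomorphism carrying $\curveMeridian_0$ to $\curveMeridian_1$ could act on $H_1(\Torus)$ as a power of the Dehn twist about the meridian class. You should instead use that $\dif^n=\idT$ forces the mapping class to have finite order, and a finite-order element of $SL(2,\ZZZ)$ fixing the primitive meridian class is the identity; this is the role of the finite-order normalization in the paper's argument.
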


\subsection*{Proof of (a)}
Let $\flowMeridian:\Torus\times I\to\Torus$ be the loop in $\pi_1\DiffT$ generating a subgroup $\subgroupMeridian$ of $\pi_1\DiffT$, see~\eqref{equ:LM_flows}.
Let also $\slide =\slide_0\circ\cdots\circ\slide_{n-1}$ be the product of slides along all curves in $\CrvList$, see~\eqref{equ:all_slides_product}, $\slide^{-1}$ be its inverse, and $\cclass{\slide^{-1}} \in \Slide$ be the isotopy class of $\slide^{-1}$ in $\pi_0\SfC$.
Then $\cclass{\slide^{-1}}$ also freely generates $\Slide=\langle\cclass{\slide}\rangle$.
Therefore it suffices to prove that 
\[
\qhom(\flowMeridian) = \jO\circ\dc^{-1}\bigl(\,\cclass{\slide^{-1}}\,\bigr).
\] 
Notice that $\qhom(\flowMeridian)$ is represented by the isotopy $\{\flowMeridian_t\}_{t\in I}$.

Also recall that we can also regard $\flowMeridian$ as a flow on $\Torus$ defined by the same formula~\eqref{equ:LM_flows}. 
Since all orbits of $\flowMeridian$ have period $1$, $\flowMeridian_{\alpha} = \flowMeridian_{\alpha+k}$ for all $k\in\ZZZ$ and any function $\alpha$.

In particular, by Lemma~\ref{lm:slide_properties} $\slide^{-1} = \flowMeridian_{-\sigma} = \flowMeridian_{1-\sigma}$ for a $C^{\infty}$ function $\sigma:\Torus\to\RRR$ such that $\sigma=1$ on a small neighbourhood $\Uman$ of $\CrvList$ and $\sigma=0$ outside some larger neighbourhood $\Nman$.

Now let $\mathbf{G}_t = \flowMeridian_{t(1-\sigma)}$, $t\in I$, be an isotopy between $\mathbf{G}(0)=\idT$ and $\mathbf{G}(1)=\slide^{-1}$ fixed on some neighbourhood of $\CrvList$. 
Regard it as a path $\mathbf{G}: (I, \partial I, 0) \longrightarrow (\DTC, \SfC, \idT)$.
Then $\partial(\cclass{\mathbf{G}}) = \cclass{\mathbf{G}(1)} = \cclass{\slide^{-1}}$, and so
\[\dc^{-1} \cclass{\slide^{-1}} = \cclass{\mathbf{G}}.\]
As $\dc$ is an \textit{isomorphism}, $\dc^{-1}\cclass{\slide^{-1}}$ \textit{does not depend} on a particular choice of such an isotopy $\mathbf{G}$.
Furthermore, $\jO\circ \dc^{-1}\cclass{\slide^{-1}}$ is a homotopy class of $\mathbf{G}$ regarded as a map
\begin{align}\label{equ:G_as_a_map_of_triples}
\mathbf{G}:& (I, \partial I, 0) \longrightarrow (\DT, \Sf, \idT),
&
\mathbf{G}(t)& = \flowMeridian_{t(1-\sigma)}.
\end{align}
Therefore it remains to show that $[\mathbf{G}] = \qhom(\idT\times\flowMeridian) \in \pi_1(\DT,\Sf)$.
In fact the homotopy between $\{\mathbf{G}_t\}_{t\in I}$ and $\{\flowMeridian_t\}_{t\in I}$ in the space $C\bigl( (I, \partial I, 0), (\DT, \Sf, \idT)\bigr)$ can be defined as follows:
\begin{align*}
&\mathbf{H}: (I, \partial I, 0) \times I \longrightarrow (\DT, \Sf, \idT),
&
\mathbf{H}(t,s) &= \flowMeridian_{t(1-s\sigma)}.
\end{align*}
We leave the details for the reader, see~\cite{MaksymenkoFeshchenko:MS:2014}.
	
\subsection*{Proof of (b)}
The upper row of~\eqref{equ:extended_comm_diagram} coincides with~\eqref{equ:morphism_of_sequences} and exactness of the lower row is evident.
Therefore it remains to construct epimorphisms $\eval$ and $\krot$ and prove that the columns of the diagram~\eqref{equ:extended_comm_diagram} are exact as well.

\subsection*{(b1) Construction of $\krot:\pi_0\Sf \longrightarrow \ZZZ_n$.}
Let $\dif\in\Sf$.
Then $\dif(\CrvList)=\CrvList$.
Since the curves in $\CrvList$ are \myemph{cyclically} ordered, there exists $\krot(\dif)\in\ZZZ_n$ such that
\begin{equation}\label{equ:krot_def}
\dif(\curveMeridian_{i}) = \curveMeridian_{i+\krot(\dif)\,\mathrm{mod}\,n}, \qquad i=0,\ldots,n-1.
\end{equation}
Recall that all indices here are taken module $n$.
Evidently, $\krot(\dif)$ depends only on the isotopy class $[\dif]$ of $\dif$ in $\Sf$, and the correspondence $\dif\longmapsto\krot[\dif]$ is a homomorphism $\krot:\pi_0\Sf \to \ZZZ_n$.
Moreover, $\krot$ is an epimorphism, since by definition $\CrvList$ consists of all images of $\curveMeridian$ with respect to $\Sf$.

\subsection*{(b2) Construction of $\eval:\pi_1(\DT, \Sf) \longrightarrow \ZZZ$.}
Let $\ncover:\RRR\times S^1\longrightarrow \Torus\equiv S^1\times S^1$ be the covering map defined by $\ncover(x,y) = \bigl(\tfrac{x}{n}\,\mathrm{mod}\,1, y\bigr)$.
Since $\curveMeridian_i = \frac{i}{n} \times S^1$, we have that 
\begin{equation}\label{equ:ncover_prop}
 \ncover(\{i\}\times S^1)= \curveMeridian_{i\,\mathrm{mod}\,n}, \qquad i\in\ZZZ,
\end{equation}
and in particular, $\ncover^{-1}(\CrvList) = \ZZZ\times S^1$.

\medskip
Let $\omega:(I,\partial I, 0) \longrightarrow (\DT,\Sf, \idT)$ be a representative of some element of $\pi_1(\DT, \Sf)$.
Then $\omega$ can be regarded as an isotopy $\omega:\Torus\times I \to \Torus$ such that $\omega_0=\idT$ and $\omega_1\in\Sf$, that is $\omega_1(\CrvList)=\CrvList$.
Therefore $\omega$ lifts to a unique isotopy $\widetilde{\omega}:(\RRR\times S^1)\times I \to \RRR\times S^1$ such that $\widetilde{\omega}_0=\id_{\RRR\times S^1}$ and $\ncover\circ\widetilde{\omega}_t = \omega_t \circ\ncover$ for all $t\in I$.

In particular, since $\omega_1(\CrvList)=\CrvList$, we have from~\eqref{equ:ncover_prop} that $\widetilde{\omega}_1(\ZZZ\times S^1) = \ZZZ\times S^1$, whence there exists an integer number $\eval_{\omega}\in\ZZZ$ such that 
\begin{equation}\label{equ:tilde_omega_prop}
 \widetilde{\omega}_1(\{i\}\times S^1) = ( \{i+\eval_{\omega}\}\times S^1), \qquad i\in\ZZZ.
\end{equation}
It is easy to see that $\eval_{\omega}$ depends only on the homotopy class $[\omega]$ of $\omega$ in $\pi_1(\DT, \Sf)$ and the correspondence $[\omega]\longmapsto\eval_{\omega}$ is a homomorphism $\eval:\pi_1(\DT, \Sf)  \longrightarrow \ZZZ$.

\subsection*{(b3) Commutativity of diagram~\eqref{equ:extended_comm_diagram}.}
Due to~\eqref{equ:morphism_of_sequences} the upper square is commutative.

\medskip
{\bf Lower right square}.
We need to check that
\begin{equation}\label{equ:k_d__eta_mod_n}
\krot\circ\partial = \eval \, \mathrm{mod} \, n.
\end{equation}
In the notation of (b2), notice that $\partial[\omega] = [\omega_1] \in \pi_0\Sf$ by definition of boundary homomorphism.
Hence for $i=0,\ldots,n-1$,
\[
\omega_1(\curveMeridian_i) \stackrel{\eqref{equ:ncover_prop}}{=\!=}
\omega_1\circ\ncover(\{i\} \times S^1)  =
\ncover\circ\widetilde{\omega}_1(\{i\}\times S^1) \stackrel{\eqref{equ:tilde_omega_prop}}{=\!=}
\ncover(\{i+\eval[\omega]\}\times S^1) = \curveMeridian_{i+\eval[\omega]\,\mathrm{mod}\,n}. 
\]
Now~\eqref{equ:k_d__eta_mod_n} follows from~\eqref{equ:krot_def}.

\medskip
{\bf Lower left square}.
We should show that
\begin{equation}\label{equ:eta_q_L__n}
 \eval\circ\qhom([\flowParallel]) = n.
\end{equation}
Evidently, the path $q(\flowParallel): (I,\partial I, 0) \longrightarrow (\DT,\Sf, \idT)$ can be regarded as an isotopy
\[ \flowParallel:\Torus\times I\to\Torus,
 \qquad 
 \flowParallel(x,y,t) = (x+\,\mathrm{mod}\,n, y),
\]
for $(x,y)\in\Torus$, see~\eqref{equ:LM_flows}. 
Then $\flowParallel$ lifts to an isotopy $\widetilde{\flowParallel}:(\RRR\times S^1)\times I \to \RRR\times S^1$ given by $\widetilde{\flowParallel}(x,y,t) = (x+nt,y)$.
In particular, $\widetilde{\flowParallel}(\{i\}\times S^1) = \{i+n\}\times S^1$, whence by~\eqref{equ:tilde_omega_prop} $\eval\circ\qhom([\flowParallel]) = n$.

\subsection*{(b4) Exactness of right column}
We should prove that the following sequence 
\[
1 \longrightarrow \Slide \xrightarrow{~~\subset~~} \pi_0\SfC \xrightarrow{~~\jZ~~} \pi_0\Sf \xrightarrow{~~\krot~~} \ZZZ_n \longrightarrow 1
\]
is exact.
By definition $\Slide$ is a subgroup of $\pi_0\SfC$ and as noted above $\krot$ is an epimorphism.
Therefore we should check that $\Slide = \ker\jZ$ and $\jZ(\pi_0\SfC) = \ker\krot$.

\medskip 
{\bf Inclusion $\Slide \subset \ker\jZ$.}

Recall that each $\slide_i\in\StabIdf$, whence their product $\slide\in\StabIdf$ as well, and therefore $\jZ(\cclass{\slide}) = [\slide] = [\idT] \in \pi_0\Sf$.
This shows that $\Slide=\langle\cclass{\slide} \rangle \subset \ker(\jZ)$

\medskip 
{\bf Inverse inclusion $\Slide \supset \ker\jZ$.}

Notice that the kernel of $\jZ:\pi_0\SfC\to\pi_0\Sf$ consists of isotopy classes of diffeomorphisms in $\SfC$ isotopic to $\idT$ by $\func$-preserving isotopy, however such an isotopy should not necessarily be fixed on $\curveMeridian$.
In other words, if we denote 
\[
\kerjo \ := \ \Sidf \cap\DTC \ = \
\StabIdf \ \cap \ \Diff(\Torus,\curveMeridian),
\]
then
\begin{equation}\label{equ:kerj0__pi0_StabIdf_DiffIdTC}
\ker\jZ \ = \ \pi_0 \kerjo. 
\end{equation}
Evidently, $\SidfC = \Stabf \cap \DiffIdTC$ is the identity path component of $\kerjo$, whence
\[
\ker \jZ \ = \ \pi_0\kerjo \ = \ \kerjo /\SidfC. 
\]
Also notice that each slide $\slide_i\in\StabIdf$, whence their product $\slide\in\StabIdf$ as well.
On the other hand by Theorem~\ref{th:slides_gen_pi0G} $\slide \in \DTC$, whence
\[
\slide \in \Sidf \cap\DTC = \kerjo.
\]

\begin{lemma}\label{lm:ker_j0}
$\pi_0\kerjo = \langle\cclass{\slide} \rangle \cong \ZZZ$.
In other words, each $\dif\in\kerjo$ is isotopic in $\kerjo$ to $\slide^{b}$ for a unique $b\in\ZZZ$.
\end{lemma}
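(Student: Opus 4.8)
My plan is to use the shift‑function description of $\StabIdf$ from Lemma~\ref{lm:shift_functions}(1) to reduce the whole statement to elementary facts about a single integer that records the value of a shift function along $\curveMeridian$. Throughout I would keep the notation $\slide=\flow_{\sigma}$ with $\sigma\equiv1$ on a neighbourhood of $\CrvList$ from Lemma~\ref{lm:slide_properties}, and use the cocycle identity $\flow_{\mu}\circ\flow_{\nu}=\flow_{\nu+\mu\circ\flow_{\nu}}$ for shift maps (cf.~\cite{Maksymenko:TA:2003}). Let $\dif\in\kerjo$. Since $\kerjo\subset\StabIdf$, there is a unique $C^{\infty}$ function $\alpha$ with $\dif=\flow_{\alpha}$. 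As $\dif$ is fixed on some connected neighbourhood $V$ of $\curveMeridian$ contained in $J_{\eps}(0)\times S^1$, on which $\flow=\flowMeridian$ has all orbits periodic of period $1$, the identity $\flow(z,\alpha(z))=z$ forces $\alpha(z)\in\ZZZ$ for regular $z\in V$; by connectedness $\alpha$ equals a single integer $b=b(\dif)$ on $V$. Evaluating the cocycle identity where $\flow_{\alpha}=\idT$ and $\sigma=1$ shows that $b:\kerjo\to\ZZZ$ is a homomorphism with $b(\slide^{k})=k$ for all $k$.

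Next I would show that $\dif$ is isotopic in $\kerjo$ to $\slide^{b(\dif)}$. Put $\dif'=\slide^{-b}\circ\dif=\flow_{\gamma}$, where by the cocycle identity (using $\slide^{-b}=\flow_{-b\sigma}$) one gets $\gamma=\alpha-b\,(\sigma\circ\flow_{\alpha})$. Near $\curveMeridian$ we have $\alpha=b$, $\flow_{\alpha}=\idT$ and $\sigma=1$, so $\gamma\equiv0$ there. By Lemma~\ref{lm:isotopy_between_shifts} the family $\{\flow_{t\gamma}\}_{t\in[0,1]}$ is an isotopy from $\idT$ to $\dif'$; each $\flow_{t\gamma}$ lies in $\StabIdf$ and, since $t\gamma\equiv0$ near $\curveMeridian$, is fixed on a neighbourhood of $\curveMeridian$, hence lies in $\kerjo$. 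Thus $\dif'\simeq\idT$ in $\kerjo$, and left–multiplying this isotopy by $\slide^{b}\in\kerjo$ yields $\dif\simeq\slide^{b}$ in $\kerjo$. Consequently $\pi_0\kerjo$ is the cyclic group generated by $\cclass{\slide}$.

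It remains to prove that $\cclass{\slide}$ has infinite order, which simultaneously gives uniqueness of $b$. I would verify that $b$ is constant on path components of $\kerjo$: along an isotopy $g_s=\flow_{\alpha_s}$ in $\kerjo$ the shift functions $\alpha_s$ depend continuously on $s$, so for a fixed $z_0\in\curveMeridian$ the integer $\alpha_s(z_0)=b(g_s)$ varies continuously and is therefore constant. Hence $b$ descends to a homomorphism $\pi_0\kerjo\to\ZZZ$ with $\cclass{\slide}\mapsto1$, and since every class equals $\cclass{\slide^{b}}$ with image $b$, this descended map is an isomorphism. The continuous dependence $s\mapsto\alpha_s$ (furnished by the shift‑function framework of Lemma~\ref{lm:shift_functions}, \cite{Maksymenko:AGAG:2006, Maksymenko:ProcIM:ENG:2010}) is the one point needing care, and it is exactly where the $\func$‑preservation of the isotopy is essential: under a merely ambient isotopy $\slide$ is already trivial by Theorem~\ref{th:slides_gen_pi0G}, so no such invariant could survive. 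As a fully rigorous alternative for this last point, I would instead invoke part~(a): there $\jO\circ\dc^{-1}$ carries the cyclic group $\Slide$ onto $\qhom(\subgroupMeridian)\cong\ZZZ$, whence $\Slide\cong\ZZZ$ in $\pi_0\SfC$, and this transports to infinite order of $\cclass{\slide}$ in $\pi_0\kerjo$ via the identification $\ker\jZ=\pi_0\kerjo$.
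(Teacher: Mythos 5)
Your overall strategy (shift functions $\dif=\flow_{\alpha}$, reading off an integer from $\alpha$ near $\curveMeridian$, and a linear isotopy of shift functions down to $\slide^{b}$) is the same as the paper's, and your alternative argument for the infinite order of $\cclass{\slide}$ via part~(a) of Theorem~\ref{th:two_epimorphisms} is legitimate and arguably cleaner than the paper's closing remark. But there is a genuine gap in the middle step. The group $\kerjo=\Sidf\cap\DTC$ consists of diffeomorphisms fixed near \emph{all} of $\CrvList=\{\curveMeridian_0,\dots,\curveMeridian_{n-1}\}$ and isotopic to $\idT$ relative to a neighbourhood of $\CrvList$, whereas you only record the integer value $b$ of $\alpha$ on a neighbourhood of the single curve $\curveMeridian=\curveMeridian_0$. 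A priori $\alpha$ equals some integer $b_i$ near each $\curveMeridian_i$, and these may differ. If $b_i\neq b$ for some $i$, then near $\curveMeridian_i$ your function $\gamma=\alpha-b\,(\sigma\circ\flow_{\alpha})$ equals the nonzero integer $b_i-b$, so $\flow_{t\gamma}$ restricted to a neighbourhood of $\curveMeridian_i$ is the nontrivial rotation $\flow_{t(b_i-b)}$ for $t\in(0,1)$; the path $\{\flow_{t\gamma}\}$ then leaves $\Diff(\Torus,\CrvList)$ and hence is not an isotopy in $\kerjo$. So the assertion ``$\dif'\simeq\idT$ in $\kerjo$'' does not follow as written, and for $n\geq2$ (the standing assumption of the paper) the proof is incomplete.

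The missing ingredient is precisely where the hypothesis $\dif\in\DTC=\DiffId(\Torus,\CrvList)$, and not merely $\dif\in\Diff(\Torus,\CrvList)$, must be used: the paper observes that $\dif|_{\Cyl_i}$ is isotopic to $\id_{\Cyl_i}$ relative to $\partial\Cyl_i$, and by Lemma~\ref{lm:DehnTwist_gen_DQdQ} the class of $\dif|_{\Cyl_i}$ in $\pi_0\Diff(\Cyl_i,\partial\Cyl_i)\cong\ZZZ$ is the difference of the constant integer values of $\alpha$ near the two boundary curves of $\Cyl_i$; hence $b_{i+1}-b_i=0$ for every $i$, and all the $b_i$ coincide. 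That this step cannot be skipped is illustrated by $\slide_1\in\StabIdf\cap\Diff(\Torus,\CrvList)$, whose shift function is $0$ near $\curveMeridian_0$ but $1$ near $\curveMeridian_1$; it fails to lie in $\kerjo$ exactly because of the relative-isotopy condition that your argument never invokes. Once the equality of the $b_i$ is inserted, the remainder of your argument goes through and essentially coincides with the paper's proof.
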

\begin{proof}
Let $\dif\in\kerjo$.
Since $\kerjo \ := \ \Sidf \cap \DTC  \ \subset \ \Sidf$, it follows from Lemma~\ref{lm:shift_functions} that there exists a unique smooth function $\alpha\in C^{\infty} (\Torus)$ such that $\dif = \flow_{\alpha}$.

Since $\dif$ is fixed on some neighbourhood $\Nman_i$ of $\curveMeridian_i$, that is $\dif(x) = \flow_{\alpha}(x) = \flow(x,\alpha(x)) = x$ for all $x\in \Nman_i$, it follows that $\alpha(x)$ must be an integer multiple of the period of $\curveMeridian_i$.
Hence $\alpha$ takes a constant integer value on $\Nman_i$.

We claim that this value is the same for all $i=0,\ldots,n-1$.
Indeed, let $\Cyl_i$ be a cylinder bounded by $\curveMeridian_i$ and $\curveMeridian_{i+1}$ is isotopic to $\id_{\Cyl_i}$ relatively to some neighbourhood of $\partial\Cyl_i$, and $\dtw_i$ be a Dehn twist supported in $\Int{\Cyl_i}$ and defined by~\eqref{equ:special_twists}.
By Lemma~\ref{lm:DehnTwist_gen_DQdQ} the isotopy class of its restriction $\dtw_i|_{\Cyl_i}$ generates the group $\pi_0\Diff(\Cyl_i,\partial\Cyl_i)$.
Then it is easy to see that $\dif|_{\Cyl_i}$ is isotopic in $\Diff(\Cyl_i,\partial\Cyl_i)$ to $\dtw^{b}$ if and only if $\alpha(\Cyl_{i+1})-\alpha(\Cyl_{i}) = b$.
By assumption $\dif|_{\Cyl_i}$ is isotopic to $\id_{\Cyl_i} = \dtw_{i}^{0}$ relatively to $\partial\Cyl_i$, whence $\alpha(\Cyl_{i+1})-\alpha(\Cyl_{i}) = 0$ for all $i$.

Thus $\alpha$ takes the same constant integer value on all of $\CrvList$, which of course depends on $\dif$.
Denote this value by $k$.
Then the isotopy between $\dif=\flow_{\alpha}$ and $\slide^{k} = \flow_{k\sigma}$ in $\SfC$ can be given by the formula: $\dif_t = \flow_{(1-t)\alpha  + tk\sigma}$, see Lemma~\ref{lm:isotopy_between_shifts}.

It remains to note that since $\func$ has critical points inside each $\Cyl_i$, $\slide^{k}$ is not isotopic to $\slide^{l}$ for $k\not=l$.
\end{proof}

\medskip
{\bf Inclusion $\image(\jZ) \subset \ker(\krot)$.}
Let $\dif\in\SfC$, so $\dif$ is fixed on $\CrvList$, and in particular, $\dif(\curveMeridian_i)=\curveMeridian_i$ for all $i$.
Then by~\eqref{equ:krot_def}, $\krot\circ\jZ\bigl(\cclass{\dif}\bigr) = 0$, i.e. $\mathrm{image}(\jZ) \subset\ker(\krot)$.

\medskip
{\bf Inverse inclusion $\image(\jZ) \supset \ker(\krot)$.}
Let $\dif\in\Sf$ be such that $\krot[\dif]=0$, that is $\dif(\curveMeridian_i)=\curveMeridian_i$ for all $i$.
Since $\dif$ is isotopic to $\idT$, it also preserves orientation of each $\curveMeridian_i$, therefore by Lemma~\ref{lm:shift_functions} we can assume that $\dif$ is fixed on some neighbourhood of $\CrvList$ and such a replacement does not change the isotopy class $[\dif] \in\pi_0\Sf$.
So we can assume that $\dif\in \DiffIdT \cap \DiffTC = \grp$, see~\eqref{equ:group_of_slides}.
Then by Corollary~\ref{cor:slides_gen_pi0G} we can write
\[
\dif = \slide_1^{a_1} \circ \cdots \circ \slide_{n-1}^{a_{n-1}} \circ \gdif,
\]
for some $a_i\in\ZZZ$ and $\gdif\in\DiffIdTC$.
But each $\slide_i\in\StabIdf$, whence $[\dif]=[\gdif]\in\pi_0\Sf$ and 
\[
\gdif \in \Stabf \cap \DiffIdTC \ \equiv \ \SfC.
\]
In other words, $[\dif] = [\gdif] = \jZ\bigl(\cclass{\gdif}\bigr)$.
Thus $\image(\jZ) \supset\ker(\krot)$ as well.

\subsection*{(b5) Exactness of middle column}
We need to check that the following short sequence 
\[
1 \longrightarrow \pi_1(\DTC,\SfC) \xrightarrow{~~\jO~~} \pi_1(\DT, \Sf) \xrightarrow{~~\eval~~} \ZZZ \longrightarrow 1
\]
is exact.
Since $\partial$, $\krot$ and $\mathrm{mod}\,n$ are surjective, it follows from~\eqref{equ:k_d__eta_mod_n} that \myemph{\bfseries $\eval$ is surjective} as well.
Therefore it remains to verify that $\jO$ is injective and $\image(\jO) = \ker(\eval)$.

\medskip
{\bf Inclusion $\image(\jO) \subset \ker(\eval)$.}
Again using notation of (b2) suppose that $\omega:(I,\partial I, 0) \longrightarrow (\DTC,\SfC, \idT)$ is a representative of some element of $\pi_1(\DTC, \SfC)$.
Thus $\omega$ can be regarded as an isotopy of $\Torus$ fixed on $\CrvList$.
Therefore its lifting $\widetilde{\omega}:(\RRR\times S^1)\times I \to \RRR\times S^1$ is fixed on $\ZZZ\times S^1$, whence $\widetilde{\omega}_1(\{i\}\times S^1) =\{i\}\times S^1$ for all $i\in\ZZZ$.
Therefore by~\eqref{equ:tilde_omega_prop}, $\eval\circ\jO\bigl(\cclass{\omega}\bigr) = \eval[\omega] = 0$, i.e. $\omega\in\ker(\eval)$.

\medskip
{\bf Inverse inclusion $\image(\jO) \supset \ker(\eval)$.}
Let $x\in\pi_1(\DT,\Sf)$ be such that $\eval(x)=0$, i.e. $x\in\ker(\eval)$.
Then 
\[
0 = \eval(x)\,\mathrm{mod}\,n = \krot\circ\partial(x).
\]
Hence $\partial(x) \in \ker(\krot) = \image(\jZ) = \jZ(\Slide)$.
In other words, $\partial(x) = \jZ(\slide^k)$ for some $k\in\ZZZ$, where for simplicity of notation we denote by $\slide$ its isotopy class $\cclass{\slide} \in\pi_0\SfC$.

Put $y = \jO \circ \dc^{-1}(\slide^k) \in \pi_1(\DT,\Sf)$.
Then
\[
\partial(y) = \partial \circ \jO \circ \dc^{-1}(\slide^k) = 
\jZ \circ \dc \circ \dc^{-1}(\slide^k) = \jZ(\slide^k) = \partial(x).
\]
Hence $xy^{-1} \in \ker(\partial) = \image(\qhom)$.
In other words,
\[
x = \qhom(\flowParallel)^a \cdot \qhom(\flowMeridian)^b \cdot y
\]
for some $a,b\in \ZZZ$.

We claim that $a=0$, whence $x = \qhom(\flowMeridian)^b \cdot y$.
Indeed, since $\eval\circ\qhom(\flowParallel) = n$, $\eval\circ\qhom(\flowMeridian) =0$, and $\eval(y) = \eval\circ \jO \circ \dc^{-1}(\slide^k) = 0$ we see that
\[
0 = \eval(x) = \eval\bigl(\qhom(\flowParallel)^a \cdot \qhom(\flowMeridian)^b \cdot y\bigr) = a n + 0 + 0,
\]
and so $a=0$.

Moreover, by (a) $\qhom(\flowMeridian)=\jO\circ\dc^{-1}(\slide^{-1})$, whence
\[
x \ = \ \qhom(\flowMeridian)^b \cdot y \ = \ \jO\circ\dc^{-1}(\slide^{-b}) \ \cdot \ \jO \circ \dc^{-1}(\slide^k) \ = \
\jO\circ\dc^{-1}(\slide^{k-b}) \ \in \ \image(\jO).
\]

\subsection*{Proof of (c)}
For $n=1$, we can take $\gamma$ to be the constant path into $\idT$.
Therefore assume that $n\geq2$.

Let $\flowParallel_t:\Torus\to\Torus$, $t\in I$, be the isotopy defined by~\eqref{equ:LM_flows} and generating $\subgroupParallel$, and $\lambda = \flowParallel_{1/n}$, thus
\[
\lambda(x,y) = (x+\tfrac{1}{n} \ \mathrm{mod} \ 1, \ y ).
\]
In fact we will use the following three properties of $\lambda$:
\begin{itemize}
\item
$\func\circ\lambda$ coincides with $\func$ on some neighbourhood $\Nman$ of $\CrvList$, see~\eqref{equ:relation_flowMer_flowHam};
\item
$\lambda^n = \idT$;
\item
$\lambda(\Cyl_i) = \Cyl_{i+1}$ for all $i=0,\ldots,n-1$.
\end{itemize}

Notice that by definition of cyclic index of $\func$, there exists $\dif\in\Sf$ such that $\dif(\Cyl_i) = \Cyl_{i+1}$ as well as $\lambda$.

\myemph{We can assume that $\dif = \lambda$ on some neighbourhood $\Nman$ of $\CrvList$}.
Indeed, since $\lambda$ and $\dif$ preserve orientation of $\Torus$, and $\func\circ\dif=\func$, it follows that $\dif\circ\lambda^{-1}$ leaves invariant all regular components of level sets of $\func$ belonging to $\Nman$.
Therefore $\dif$ is isotopic in $\Sf$ to a diffeomorphism $\dif_1\in\Sf$ such that $\dif_1\circ\lambda^{-1}$ is fixed on some neighbourhood $\Nman_1$ of $\CrvList$, whence $\dif_1 = \lambda$ near $\CrvList$.
Therefore we can replace $\dif$ with $\dif_1$ and $\Nman$ with $\Nman_1$.

\medskip 
\myemph{We can additionally assume that $\dif^n=\idT$}.
Indeed, we have that 
\[ \dif^{n-1}|_{\Nman} = \lambda^{n-1}|_{\Nman} = \lambda^{-1}|_{\Nman} = \dif^{-1}|_{\Nman}.\]
Define a diffeomorphism $\dif_1:\Torus\to\Torus$ by $\dif_1 = \dif$ on $\Mman\setminus\Cyl_{n-1}$, and $\dif_1 = \dif^{-1}$ on $\Cyl_{n-1}$.
Then $\dif_1$ is a well-defined diffeomorphism such that $\dif_1^n = \idT$ and $\func\circ\dif_1=\func$, i.e. $\dif_1\in\Stabf$.
Therefore we can again replace $\dif$ with $\dif_1$.

\medskip
\myemph{We claim that $\dif$ is isotopic to $\idT$}.
Indeed, since $\dif = \lambda$ on an open set, say on a neighbourhood of $\CrvList$, and $\gdif$ preserves orientation, we see that so does $\dif$.
But all non-trivial isotopy classes of diffeomorphisms of $\Torus$ have infinite orders, whence $\dif$ is isotopic to $\idT$. 

\medskip
Now let $\gamma_t:\Torus\to\Torus$, $t\in I$, be any isotopy between $\idT$ and $\dif$.
It can be regarded an element of $\pi_1(\DiffT,\Stabf)$.
Then $1 = \krot[\gamma]  = \eval[\gamma]\,\mathrm{mod}\,n$, so $\eval[\gamma] = an + 1$ for some $a\in\ZZZ$.
Replacing $\gamma$ with any representative of the class $[\gamma]\pmult[\flowParallel]^{-a}$ can assume that $\eval[\gamma]=1$.
Theorem~\ref{th:two_epimorphisms} is completed.

\section{$\func$-invariant free $\ZZZ_n$-action}\label{sect:free_zn_action}
The following theorem is a reformulation of (c) of Theorem~\ref{th:two_epimorphisms}.
It shows that there exists a free $\func$-invaraint $\ZZZ_n$-action on $\Torus$, and so $\func$ factors to a function of the same class $\FFF(\Torus)$ on the corresponding quotient $\Torus/\ZZZ_n$ being also a $\Torus$.

\begin{theorem}\label{th:f_invar_zn_action}
There exists an $n$-sheet covering map $p:\Torus\to\Torus$ and $\hfunc\in\FFF(\Torus)$ making commutative the following diagram:
\begin{equation}\label{equ:f_factors_to_n_covering}
\xymatrix{
\Torus \ar[rr]^-{p} \ar[dr]_-{\func}  & & \Torus \ar[dl]^-{\hfunc} \\
& \RRR 
}
\end{equation}
Moreover, the KR-graph of $\hfunc$ also has one cycle, however the cyclic index of $\hfunc$ is $1$.
\end{theorem}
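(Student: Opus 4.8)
The plan is to read off the required symmetry directly from part~(c) of Theorem~\ref{th:two_epimorphisms} and to use it as the deck-transformation group of $p$. Set $\dif=\gamma(1)$, where $\gamma$ is the path produced in~(c). Then $\dif\in\StabPrf$, so $\func\circ\dif=\func$ and $\dif$ is orientation-preserving (being isotopic to $\idT$), and $\dif^n=\idT$. Moreover, by the commutativity relation~\eqref{equ:k_d__eta_mod_n} we have $\krot[\dif]=\krot(\partial[\gamma])=\eval[\gamma]\bmod n=1$, so $\dif(\curveMeridian_i)=\curveMeridian_{i+1}$ and hence $\dif(\Cyl_i)=\Cyl_{i+1}$ for all $i$. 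Thus $\dif$ generates a $\ZZZ_n$-action on $\Torus$ preserving $\func$.

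First I would check that this action is \emph{free}. For $0<k<n$ the power $\dif^k$ sends $\Cyl_i$ to $\Cyl_{i+k}$ and $\curveMeridian_i$ to $\curveMeridian_{i+k}$, and since $i+k\not\equiv i\pmod n$ these images are disjoint from their sources; as every point of $\Torus$ lies either in some $\Int\Cyl_i$ or on some $\curveMeridian_i$, the map $\dif^k$ has no fixed point. Hence $p:\Torus\to\Torus/\ZZZ_n$ is an $n$-sheet covering, and because the action is free and orientation-preserving, $\Torus/\ZZZ_n$ is a closed orientable surface with $\chi(\Torus/\ZZZ_n)=\chi(\Torus)/n=0$, so it is diffeomorphic to $\Torus$. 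Since $\func\circ\dif=\func$, the function $\func$ is constant on $\ZZZ_n$-orbits and factors as $\func=\hfunc\circ p$ for a unique $\hfunc:\Torus/\ZZZ_n\to\RRR$. As $p$ is a local diffeomorphism, $\hfunc$ is smooth and, at each of its critical points, has a germ smoothly equivalent to the germ of $\func$ at any preimage; since $\Torus$ is closed, property~(B) is vacuous and property~(L) transfers from $\func$, so $\hfunc\in\FFF(\Torus)$.

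Next I would analyse the KR-graph of $\hfunc$. Put $\hat\curveMeridian=p(\curveMeridian_0)$; then $p^{-1}(\hat\curveMeridian)=\CrvList$ and $\hat\curveMeridian$ is a regular component of a level set of $\hfunc$. Since $\dif$ permutes the components $\Int\Cyl_0,\ldots,\Int\Cyl_{n-1}$ of $\Torus\setminus\CrvList$ freely and transitively, the restriction $p|_{\Int\Cyl_0}$ is a diffeomorphism onto $(\Torus/\ZZZ_n)\setminus\hat\curveMeridian$, which is therefore a connected open cylinder; hence $\hat\curveMeridian$ does not separate $\Torus/\ZZZ_n$. By the characterisation recalled in~\S\ref{sect:cyclic_index}, $\Gamma(\hfunc)$ is not a tree and so has exactly one simple cycle.

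Finally, to see that the cyclic index of $\hfunc$ equals $1$, I would use a lifting argument. Let $\gdif\in\Stab'(\hfunc)$. Being isotopic to the identity, $\gdif$ lifts through $p$ to a diffeomorphism $\tilde\gdif$ of $\Torus$ with $p\circ\tilde\gdif=\gdif\circ p$; lifting the whole isotopy gives $\tilde\gdif\in\DiffIdT$, and from $\hfunc\circ\gdif=\hfunc$ we get $\func\circ\tilde\gdif=\hfunc\circ p\circ\tilde\gdif=\hfunc\circ\gdif\circ p=\func$, so $\tilde\gdif\in\StabPrf$. By the very definition of $\CrvList$ we have $\tilde\gdif(\curveMeridian_0)\in\CrvList$, whence $\gdif(\hat\curveMeridian)=p(\tilde\gdif(\curveMeridian_0))=\hat\curveMeridian$. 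Thus every element of $\Stab'(\hfunc)$ fixes $\hat\curveMeridian$, so the family of its images is the single curve $\hat\curveMeridian$ and the cyclic index of $\hfunc$ is $1$. The main obstacle, and the only step requiring genuine care, is this last one: one must guarantee that the lift $\tilde\gdif$ is simultaneously $\func$-preserving and isotopic to $\idT$ in $\Torus$, since it is precisely this that forces $\tilde\gdif(\curveMeridian_0)$ into $\CrvList$ rather than into some other component of the level set.
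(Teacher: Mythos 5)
Your proposal is correct and follows essentially the same route as the paper: take $\gdif=\gamma(1)$ from part~(c) of Theorem~\ref{th:two_epimorphisms}, observe that it generates a free $\func$-invariant $\ZZZ_n$-action (freeness coming from $\krot[\gdif]=1$), pass to the quotient torus, and transfer property~(L) via the local diffeomorphism $p$. The paper explicitly leaves the one-cycle and cyclic-index-$1$ claims to the reader, and your lifting argument for $\Stab'(\hfunc)$ through the covering $p$ is a correct way to supply that missing verification.
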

\begin{proof}
Let $\gamma$ be the same as in (c) of Theorem~\ref{th:two_epimorphisms} and let $\gdif = \gamma(1) \in \Stabf$.
Then $\gdif^n=\idT$.
Notice also that $\gdif$ has no fixed points, since $\krot(\gdif)=\eval(\gamma)\,\mathrm{mod}\,n = 1$, i.e. $\gdif(\Cyl_i) = \Cyl_{i+1}$ for all $i$.
In other words, $\gdif$ yields a free $\func$-invariant action of $\ZZZ_n$ on $\Torus$ by orientation preserving diffeomorphisms.
Hence the corresponding factor map $p:\Torus\to\Torus/\ZZZ_n$ is an $n$-sheet covering of $\Torus$ and the factor space $\Torus/\ZZZ_n$ is diffeomorphic to $\Torus$.

Furthermore, since the action is $\func$-invariant, we obtain that $\func$ yields a smooth function $\hfunc:\Torus/\ZZZ_n=\Torus\to\RRR$, such that the diagram~\eqref{equ:f_factors_to_n_covering} becomes commutative.

It remains to note that since $p$ is a local diffeomorphism, the function $\hfunc$ has property (L) as well as $\func$.
Therefore $\hfunc\in\FFF(\Torus/\ZZZ_n)$.
The verification that KR-graph of $\hfunc$ has one cycle and that the cyclic index of $\hfunc$ is $1$ we leave for the reader.
\end{proof}

\section{Proof of Theorem~\ref{th:main:pi1Of}}\label{sect:proof:th:main:pi1Of}
We have to construct an isomorphism
\[
\xi: \pi_1(\DQ, \SfQ) \wrm{n} \ZZZ \cong \pi_1(\DT, \Sf).
\]
Let $\gamma:(I,\partial I, 0) \longrightarrow (\DT,\Sf, \idT)$ be a path defined in (c) of Theorem~\ref{th:two_epimorphisms}, and $\gdif = \gamma(1)\in\Sf$.
Then $\gdif(\Cyl_i) = \Cyl_{i+1}$ and $\gdif^n=\idT$.

Recall also that the group $\ZZZ$ acts on $\Maps(\ZZZ_n,\pi_1\OfQ)$ by formula~\eqref{equ:action_k_on_alpha}.

\begin{lemma}
There exists an isomorphism
\[
\eta: \Maps\bigl(\ZZZ_n,\pi_1(\DQ, \SfQ)\bigr) \longrightarrow \pi_1(\DTC,\SfC).
\]
Moreover, let $\alpha\in\Maps\bigl(\ZZZ_n,\pi_1(\DQ, \SfQ)\bigr)$, $k\in\ZZZ$, and $\alpha^k\in\Maps\bigl(\ZZZ_n,\pi_1(\DQ, \SfQ)\bigr)$ be the result of the action of $k$ on $\alpha$, see~\eqref{equ:action_k_on_alpha}.
Then
\begin{equation}\label{equ:g_act_on_paths}
\jO(\eta(\alpha^k)) = [\gamma^k] \pmult \jO(\eta(\alpha)) \pmult [\gamma^{-k}].
\end{equation}
\end{lemma}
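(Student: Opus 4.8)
The plan is to identify $\pi_1(\DTC,\SfC)$ with a product of $n$ copies of $\pi_1(\DQ,\SfQ)$ indexed by the cylinders $\Cyl_0,\dots,\Cyl_{n-1}$, take this identification as $\eta$, and then read off~\eqref{equ:g_act_on_paths} from the effect of conjugation by $\gdif=\gamma(1)$ on these cylinders. \emph{Product decomposition.} Every $\dif\in\DTC$ is fixed near $\CrvList$, hence preserves each $\Cyl_i$ and restricts to $\dif|_{\Cyl_i}\in\Diff(\Cyl_i,\partial\Cyl_i)$; conversely, diffeomorphisms of the $\Cyl_i$ fixed near their boundaries glue. By the Dehn-twist homomorphism $\qiso$ of Theorem~\ref{th:slides_gen_pi0G} (whose kernel is $\DTC$), $\dif$ lies in the identity component $\DTC$ exactly when each $\dif|_{\Cyl_i}\in\DiffId(\Cyl_i,\partial\Cyl_i)$. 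Since $\func\circ\dif=\func$ is equivalent to $\func|_{\Cyl_i}\circ\dif|_{\Cyl_i}=\func|_{\Cyl_i}$ for every $i$, restriction gives an isomorphism of pairs of topological groups
\[
(\DTC,\SfC)\ \cong\ \prod_{i=0}^{n-1}\bigl(\DiffId(\Cyl_i,\partial\Cyl_i),\ \Stab'(\func|_{\Cyl_i},\partial\Cyl_i)\bigr),
\]
where $\Stab'=\Stab\cap\DiffId$. As the relative $\pi_1$ of a product of pairs is the product of the relative groups (the multiplication of Lemma~\ref{lm:DSe_group_structures} being componentwise), this yields $\pi_1(\DTC,\SfC)\cong\prod_{i=0}^{n-1}\pi_1\bigl(\DiffId(\Cyl_i,\partial\Cyl_i),\Stab'(\func|_{\Cyl_i},\partial\Cyl_i)\bigr)$.

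\emph{Transport and definition of $\eta$.} The diffeomorphism $\gdif=\gamma(1)$ satisfies $\gdif(\Cyl_i)=\Cyl_{i+1}$, $\gdif^n=\idT$ and $\func\circ\gdif=\func$. Hence conjugation $c_i(\dif)=\gdif^{i}\circ\dif\circ\gdif^{-i}$ is an isomorphism of topological groups $\Diff(\Cyl_0,\partial\Cyl_0)\to\Diff(\Cyl_i,\partial\Cyl_i)$ taking $\idQ$ to $\id_{\Cyl_i}$ (so preserving identity components) and $\Stab'(\func|_{\Cyl_0},\partial\Cyl_0)$ onto $\Stab'(\func|_{\Cyl_i},\partial\Cyl_i)$, and therefore inducing an isomorphism on relative $\pi_1$. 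Combining with the previous step, I define $\eta$ by declaring that for $\alpha\in\Maps(\ZZZ_n,\pi_1(\DQ,\SfQ))$ with $\alpha(i)$ represented by $\omega^{(i)}:(I,\partial I,0)\to(\DQ,\SfQ,\idQ)$, the class $\eta(\alpha)$ is represented by the isotopy $s\mapsto\prod_{i=0}^{n-1}\widehat{\gdif^{i}\circ\omega^{(i)}_s\circ\gdif^{-i}}$, each factor extended by $\idT$ outside $\Cyl_i$; the factors have disjoint supports, so they commute and the product is well defined. Functoriality of relative $\pi_1$ shows $\eta$ is independent of representatives and is an isomorphism.

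\emph{The conjugation formula.} Because $\gamma$ is a path in $(\DT,\Sf)$ with $\gamma(1)=\gdif$, the conjugation identity~\eqref{equ:mult_pi1Of_conj} of Lemma~\ref{lm:relations_in_rel_pi1} rewrites $[\gamma^k]\pmult\jO(\eta(\alpha))\pmult[\gamma^{-k}]$ as the class in $\pi_1(\DT,\Sf)$ of the pointwise-conjugated path $s\mapsto\gdif^{k}\circ\omega_s\circ\gdif^{-k}$, where $\omega_s=\prod_i\widehat{\gdif^{i}\omega^{(i)}_s\gdif^{-i}}$ represents $\jO(\eta(\alpha))$. Conjugating factorwise, $\gdif^{k}\circ\bigl(\gdif^{i}\omega^{(i)}_s\gdif^{-i}\bigr)\circ\gdif^{-k}=\gdif^{\,i+k}\omega^{(i)}_s\gdif^{-(i+k)}$ is supported in $\gdif^{k}(\Cyl_i)=\Cyl_{i+k}$. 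Using $\gdif^n=\idT$ to reduce indices modulo $n$ and reorganising the commuting product, this is precisely the representative of $\eta$ applied to the map obtained from $\alpha$ by the index shift of~\eqref{equ:action_k_on_alpha}, i.e. to $\alpha^{k}$, which gives~\eqref{equ:g_act_on_paths}.

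The main obstacle is the last step's bookkeeping, where two points need care. First, the passage from conjugation by the \emph{path} $[\gamma^k]$ to pointwise conjugation by the \emph{diffeomorphism} $\gdif^{k}$ is legitimate only after applying $\jO$: since $\gdif$ permutes the curves of $\CrvList$, the path $\gamma$ is a genuine relative loop only in $(\DT,\Sf)$ and not in $(\DTC,\SfC)$, and it is exactly the identity~\eqref{equ:mult_pi1Of_conj} that licenses this reduction in $\pi_1(\DT,\Sf)$. Second, the direction of the index shift produced by conjugation ($i\mapsto i+k$, coming from $\gdif(\Cyl_i)=\Cyl_{i+1}$) must be matched against the $\ZZZ$-action $\alpha\mapsto\alpha^{k}$ of~\eqref{equ:action_k_on_alpha}; the transport convention chosen in the definition of $\eta$ (whether $\alpha(i)$ is carried to $\Cyl_i$ by $\gdif^{i}$ or to $\Cyl_{-i}$ by $\gdif^{-i}$) is pinned down precisely so that $\alpha^{k}$, rather than $\alpha^{-k}$, appears on the left-hand side, and I would fix this convention at the outset to make~\eqref{equ:g_act_on_paths} hold as stated.
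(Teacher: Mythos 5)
Your proposal is correct and follows essentially the same route as the paper: $\eta$ is defined by transporting a representative $\omega_i$ of $\alpha(i)$ to $\Cyl_i$ via conjugation by $\gdif^{i}$ and assembling the disjointly supported pieces into a path in $(\DTC,\SfC)$, and the formula~\eqref{equ:g_act_on_paths} is obtained by converting conjugation by the relative loop $[\gamma^k]$ into pointwise conjugation by $\gdif^{k}$ via~\eqref{equ:mult_pi1Of_conj} and tracking the resulting cyclic shift of the cylinders. You additionally spell out the product decomposition of $(\DTC,\SfC)$ behind the claim that $\eta$ is an isomorphism, which the paper leaves to the reader, and you correctly flag the sign/direction bookkeeping in the index shift that the paper itself glosses over.
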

\begin{proof}
Let $\alpha: \ZZZ_n\to \PQ$ be any map, and $\omega_i:(I,\partial I, 0)\to (\DQ, \SfQ, \idQ)$ be a representative of $\alpha(i)$ in $\pi_1(\DQ, \SfQ)$.
Then $\omega_i(t)$ is fixed near $\dQ$, whence we have a path $\omega:I\to\DTC$ given by  
\begin{equation}\label{equ:omega_for_alpha}
\omega(t)|_{\Cyl_i} = \gdif^{i} \circ \omega_i(t) \circ \gdif^{-i}|_{\Cyl_i}, \qquad i=0,\ldots,n-1.
\end{equation}
Notice that
\begin{align*}
\omega(0)|_{\Cyl_i} &= \gdif \circ \omega_i(0) \circ \gdif^{-i} = \id_{\Cyl_i},
&
\func\circ\omega(1)|_{\Cyl_i} &= \func \circ \gdif \circ \omega_i(1) \circ \gdif^{-i} = \func,
\end{align*}
whence $\omega(0)=\idT$ and $\omega(1)\in\Stabf \cap \DiffIdTC = \SfC$.
Therefore $\omega$ is a map of triples $\omega:(I,\partial I, 0)\to (\DTC, \SfC, \idT)$, and so it represents some element $\cclass{\omega}$ of $\pi_1(\DTC,\SfC)$.
It is easy to see that the class $\cclass{\omega}$ depends only on the classes of $[\omega_i]\in\PQ$.

Define the map $\eta: \Maps\bigl(\ZZZ_n,\pi_1(\DQ, \SfQ)\bigr) \longrightarrow \pi_1(\DTC,\SfC)$ by $\eta(\alpha) = \cclass{\omega}$.
A straightforward verification shows that $\eta$ is a group isomorphism.
We leave the details for the reader.

\medskip
Now let $k\in\ZZZ$.
Then by definition of the action $\alpha^k(i) = \alpha(i+k\,\mathrm{mod}\,n)$, $i=0,\ldots,n-1$.
In particular, if $\omega_i:(I,\partial I, 0)\to (\DQ, \SfQ, \idQ)$ is a representative of $\alpha(i)$ in $\PQ$, then $\omega_{i+k\,\mathrm{mod}\,n}$ is a representative of $\alpha^k(i)$.
Therefore the path $\omega':I\to\DTC$ defined by 
\[
\omega'(t)|_{\Cyl_{i}} = \gdif^{i} \circ \omega_{i+k\,\mathrm{mod}\,}(t) \circ \gdif^{-i}|_{\Cyl_i}, \qquad i=0,\ldots,n-1.
\]
corresponds to $\alpha^k$, that is $\eta(\alpha^k) = \cclass{\omega'}$.
Notice that 
\[  
\omega'(t)|_{\Cyl_{i}} = \gdif^{-k} \circ \gdif^{i+k} \circ \omega_{i+k\,\mathrm{mod}\,}(t) \circ \gdif^{-i-k} \circ \gdif^{k} |_{\Cyl_i} 
=\gdif^{-k} \circ \omega(t) \circ \gdif^{k} |_{\Cyl_i}.
\] 
Hence 
\[
\omega'(t) = \gdif^{-k} \circ \omega(t) \circ \gdif^{k} = 
\gamma_1^k \circ \omega_t \circ \gdif_1^{-k}.
\]
Notice that $\jO(\eta(\alpha))=[\omega]$ and $\jO(\eta(\alpha^k))=[\omega']$ are the homotopy classes of $\omega$ and $\omega'$ regarded as elements of $\pi_1(\DT,\Sf)$.
Then by~\eqref{equ:mult_pi1Of_conj}
\[
\jO(\eta(\alpha^k)) = [ \gamma_1^k \circ \omega_t \circ \gdif_1^{-k} ] =
[\gamma_t^k] \pmult [\omega_t] \pmult [\gamma_t^{-k}] =
[\gamma_t^k] \pmult \jO(\eta(\alpha)) \pmult [\gamma_t^{-k}].
\]
Lemma is proved.
\end{proof}

The following statements completes Theorem~\ref{th:main:pi1Of}.
\begin{lemma}
Define a map $\xi:\pi_1(\DQ, \SfQ) \wrm{n} \ZZZ \longrightarrow \pi_1(\DT,\Sf)$ by
\[
\xi(\alpha,k) = \jO(\eta(\alpha))\pmult[\gamma_t^k],
\]
for $\alpha\in\Maps(\ZZZ_n, \pi_1(\DQ, \SfQ))$ and $k\in\ZZZ$.
Then $\xi$ is a homomorphism making commutative the following diagram with exact rows, see~\eqref{equ:exact_seq_for_wreath_prod}:
\[
\begin{CD}
1 @>>> \Maps\bigl(\ZZZ_n, \pi_1(\DQ, \SfQ)\bigr) @>{\zeta}>> \pi_1(\DQ, \SfQ) \wrm{n} \ZZZ @>{p}>> \ZZZ @>>> 1 \\
&& @V{\eta}V{\cong}V @V{\xi}VV @| \\
1 @>>> \pi_1(\DTC, \SfC) @>{\jO}>> \pi_1(\DT,\Sf) @>{\eval}>> \ZZZ @>>> 1.
\end{CD}
\]
Hence, by five lemma, $\xi$ is an isomorphism.
\end{lemma}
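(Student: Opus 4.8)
The plan is to check directly that $\xi$ is a homomorphism, verify that the two squares of the diagram commute, and then invoke the five lemma. The essential input has already been prepared: the conjugation formula~\eqref{equ:g_act_on_paths}, the fact that $\eta$ is an isomorphism, the exactness of the middle column of Theorem~\ref{th:two_epimorphisms}, and the normalization $\eval[\gamma]=1$ from part~(c).

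First I would verify multiplicativity against the wreath-product law~\eqref{equ:mult_in_wreath_prod}. Expanding $\xi\bigl((\alpha,k)(\beta,l)\bigr)=\xi(\alpha\beta^{k},k+l)=\jO(\eta(\alpha\beta^{k}))\pmult[\gamma^{k+l}]$ and using that $\eta$ is a homomorphism rewrites this as $\jO(\eta(\alpha))\pmult\jO(\eta(\beta^{k}))\pmult[\gamma^{k+l}]$. Substituting $\jO(\eta(\beta^{k}))=[\gamma^{k}]\pmult\jO(\eta(\beta))\pmult[\gamma^{-k}]$ from~\eqref{equ:g_act_on_paths}, the factor $[\gamma^{-k}]\pmult[\gamma^{k+l}]$ collapses to $[\gamma^{l}]$, and the result is precisely $\bigl(\jO(\eta(\alpha))\pmult[\gamma^{k}]\bigr)\pmult\bigl(\jO(\eta(\beta))\pmult[\gamma^{l}]\bigr)=\xi(\alpha,k)\pmult\xi(\beta,l)$. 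Thus $\xi$ is a homomorphism.

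Next I would check the two squares. For the left square, $\xi\circ\zeta(\alpha)=\xi(\alpha,0)=\jO(\eta(\alpha))\pmult[\gamma^{0}]=\jO(\eta(\alpha))=\jO\circ\eta(\alpha)$. For the right square, since $\eval$ is a homomorphism, $\eval\bigl(\xi(\alpha,k)\bigr)=\eval\bigl(\jO(\eta(\alpha))\bigr)+k\,\eval[\gamma]$; the first summand vanishes because $\image\jO=\ker\eval$ by the exactness of the middle column, and the second equals $k$ since $\eval[\gamma]=1$ by part~(c). Hence $\eval\circ\xi=p$, and both squares commute.

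Finally, the top row is the defining exact sequence~\eqref{equ:exact_seq_for_wreath_prod} of the wreath product, and the bottom row is the exact middle column of Theorem~\ref{th:two_epimorphisms}. As the left vertical arrow $\eta$ is an isomorphism and the right vertical arrow is the identity, the five lemma forces $\xi$ to be an isomorphism, which completes Theorem~\ref{th:main:pi1Of}. I do not expect a genuine obstacle here: the whole argument reduces to the already-proven conjugation identity together with exactness, and the only point demanding care is the bookkeeping of the powers of $\gamma$ in the multiplicativity computation, where the semidirect-product twist $\beta^{k}$ must cancel exactly against conjugation by $[\gamma^{k}]$.
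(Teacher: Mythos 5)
Your proposal is correct and follows essentially the same route as the paper: the same multiplicativity computation using the conjugation identity~\eqref{equ:g_act_on_paths} to cancel the semidirect-product twist, the same verification of the two squares (with your appeal to $\image\jO=\ker\eval$ making explicit why $\eval\circ\jO\circ\eta(\alpha)=0$), and the same conclusion via the five lemma.
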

\begin{proof}
We should check that $\xi$ is an isomorphism.
Suppose $\alpha,\beta\in\Maps\bigl(\ZZZ_n, \pi_1(\DQ, \SfQ)\bigr)$ and $k,l\in\ZZZ$.
Then in $\pi_1(\DQ, \SfQ) \wrm{n} \ZZZ$ we have that  
\[
(\alpha,k)\pmult(\beta,l) = (\alpha \beta^k, k+l)
\]
whence
\[
\xi(\alpha,k) = \jO(\eta(\alpha))\pmult[\gamma^{k}],
\qquad
\xi(\beta,k) = \jO(\eta(\beta))\pmult[\gamma_t^l].
\]
On the other hand,
\begin{align*}
\xi(\alpha \beta^k, k+l)
&= \jO(\eta(\alpha \beta^k))\pmult[\gamma_t^{k+l}] \\
&= \jO(\eta(\alpha))\pmult \jO(\eta(\beta^k)) \pmult[\gamma_t^{k+l}] \qquad \qquad \qquad \text{by~\eqref{equ:g_act_on_paths}} \\
&= \jO(\eta(\alpha))\pmult[\gamma_t^{k}]\pmult \jO(\eta(\beta)) \pmult[\gamma_t^{-k}]\pmult[\gamma_t^{k+l}] \\
&= \jO(\eta(\alpha))\pmult[\gamma_t^{k}]\pmult \jO(\eta(\beta))\pmult[\gamma_t^{l}] \\
&= \xi(\alpha,k) \pmult \xi(\beta,l),
\end{align*}
and so $\xi$ is a homomorphism.
Moreover,  
\[ \xi\circ \zeta(\alpha) = \xi(\alpha, 0) = \jO\circ\eta(\alpha),\]
\[\eval\circ\xi(\alpha,k) = \eval(\eta(\alpha)\pmult[\gamma^{k}]) = 
\eval\circ\eta(\alpha) + \eval([\gamma^{k}]) = 0 + k = k = p(\alpha,k).\]
Hence the above diagram is commutative, and by five lemma $\xi$ is an isomorphism.
\end{proof}

\end{document}